%% file: main.tex
\documentclass[a4paper,11pt]{amsart}
\input{Macros}
\usepackage[english]{babel}
\usepackage{tabu}
\usepackage{mathrsfs}
\usepackage{bbm}

\newcommand{\fm}{\mathfrak{m}}

\title{Local stability of Frobenius and Artin--Schreier base-change: a comparison}
\author{Quentin Posva}
\date{}

\address{Institut de Mathématiques, Université de Neuchâtel, Rue Émile-Argand 11, 2000 Neuchâtel, Switzerland} 
\email{quentin.posva@unine.ch}

\begin{document}

\maketitle

\begin{quote}
\textsc{Abstract.} Let $(X,\Delta)\to C$ be a family of pairs over a smooth curve in positive characteristic $p>0$.
We study the discrepancies of divisors over the base-change $(X,\Delta)\to C$ along Artin--Schreier and Frobenius covers of $C$, through ramification data for Artin--Schreier and degree $p$ purely inseparable extensions of DVRs. As an application, we recover a result of Hu and Zong about permanence of local stability for such base-change.
\end{quote}

\setcounter{tocdepth}{1}
\tableofcontents

\input{Introduction}

\input{Preliminaries}

\input{AS_ramification}

\input{Infinitesimal_ramification}

\input{Application_to_lc_families}

\bibliographystyle{alpha}
\bibliography{Bibliography}

\end{document}

%% file: Introduction.tex
\section{Introduction}
The condition of \emph{local stability} was originally introduced by Koll\'{a}r and Shepherd--Barron in their studies of families of surfaces \cite{Kollar_Shepherd_Barron_3folds_and_deformations_of_surfaces_singularities}, and latter turned out to be an important technical condition for the definition and construction of compact moduli spaces of varieties of general type in characteristic zero 
\cite{Kollar_Families_of_varieties_of_general_type}. It is expected that local stability is also a condition of interest in positive characteristic, but so far very few desirable properties are known (see \cite{Arvidsson_Bernasconi_Patakfalvi_Properness_moduli_surfaces} for the case of families of surfaces).

A basic statement that is so far unknown in positive characteristic is that local stability is preserved by base-change, when the base is a smooth curve. The question was recently investigated in \cite{Hu_Zong} and \cite{Benozzo-Posva}. In the first paper, Hu and Zong argue that the case of Artin--Schreier base-change can be reduced to Frobenius base-change. In the second one, Benozzo and the author used Hu--Zong's result as a starting point to reduce the case of arbitrary base-change to Frobenius base-change, before studying the latter.

I must confess that the technicalities of \cite{Hu_Zong} evade me, and this note grew up as an effort to clarify and streamline Hu--Zong's argument. In the end we recover the same statement:
\begin{theorem}[\autoref{thm:main_application}]\label{main_thm_intro}
Let $X\to C$ be a strongly locally stable family of varieties, where $C$ is a smooth curve over an algebraically closed field $k$ of positive characteristic. Let $C'\to C$ be an Artin--Schreier cover by a smooth curve. Then $X\times_CC'\to C'$ is strongly locally stable, granted $X\times_{C,F^n}C\to C$ is strongly locally stable for all $n\geq 1$, where $F^n$ is the $n$-th $k$-linear Frobenius morphism of $C$.
\end{theorem}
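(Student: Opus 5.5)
The plan is to reduce strong local stability of $X\times_CC'\to C'$ to a discrepancy comparison over a single closed point. Fix a closed point $c'\in C'$ over $c\in C$. Since an Artin--Schreier cover is étale away from its branch points, and local stability is étale local and preserved by étale base-change, we may assume $c$ is a branch point. Localizing, we get an extension of DVRs $R=\mathcal{O}_{C,c}\subset R'=\mathcal{O}_{C',c'}$ which is Artin--Schreier of degree $p$ and totally ramified; let $j$ be its ramification jump (conductor). Strong local stability of $X\to C$ near $c$ means that $(X,X_c)$ is log canonical in a neighbourhood of $X_c$ (with the appropriate $\mathbb{Q}$-Cartier conditions on $K_{X/C}$). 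Set $X'=X\times_CC'$. We must show that $(X',X'_{c'})$ is log canonical, i.e.\ that every divisor $E'$ over $X'$ centered in $X'_{c'}$ has $a(E',X',X'_{c'})\ge -1$.

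For the first step, given $E'$, let $E$ be the divisor over $X$ obtained as the image of $E'$, and let $\nu_E\subset\nu_{E'}$ be the corresponding extension of DVRs of function fields $K(X)\subset K(X')$. This is again a degree $p$ extension, Artin--Schreier because it is the base-change of $K(C)\subset K(C')$. The ramification-theoretic results on Artin--Schreier extensions of DVRs from \autoref{sec:AS_ramification} (I take these as given) express the different of $\nu_{E'}/\nu_E$ in terms of the ramification index $e\in\{1,p\}$, the inseparability of the residue extension, and a jump $j_E$. Hurwitz's formula $K_{X'}=\pi^*K_X+\mathrm{Ram}$ then gives, along $E'$,
\[
a(E',X',X'_{c'})+1 = e\,\bigl(a(E,X,X_c)+1\bigr) + \bigl(\text{different term}\bigr) - (\text{correction from } \pi^*X_c=pX'_{c'} \text{ and the different of } R'/R).
\]
The correction coming from $R'/R$ is $(p-1)(j+1)\,\mathrm{ord}_{E}(t)$, so the whole question is to compare $j_E$ with $j\cdot \mathrm{ord}_E(t)$.

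The main obstacle is the case where the extension $\nu_E\subset\nu_{E'}$ is unramified with purely inseparable residue extension, or more generally where $j_E$ is small compared to $j\cdot\mathrm{ord}_E(t)$. In that case the different is too small to absorb the correction, and Artin--Schreier ramification alone cannot give the inequality. This is where the Frobenius hypothesis is used. Write the Artin--Schreier equation $y^p-y=f$ with $f\in t^{-j}R^\times$. After rescaling, near $E$ the extension behaves like adjoining a $p$-th root, that is, like the degree $p$ purely inseparable extension $R\subset R^{1/p}$, up to an error of higher valuation. Using the results of \autoref{sec:Infinitesimal_ramification} on degree $p$ purely inseparable extensions of DVRs, I would compare the Artin--Schreier different with the corresponding "different" of the inseparable extension attached to $E$ over $X\times_{C,F}C$. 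Iterating over $n$ handles the case where $p^n$ divides the relevant valuations, and the approximation needs $n$ large enough, which is why all $F^n$ appear.

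Concretely, the key lemma I aim for is: for each $E'$ there exist $n\ge1$ and a divisor $E''$ over $X\times_{C,F^n}C$ such that
\[
a(E',X',X'_{c'})+1\ \ge\ \lambda\bigl(a(E'',X^{(n)},X^{(n)}_c)+1\bigr)\quad\text{for some }\lambda>0.
\]
The right-hand side is nonnegative by the assumed strong local stability of the Frobenius base-changes, and this gives the result. I would prove this lemma by case analysis on $(e,\text{residue type})$, doing the unramified-inseparable case through the truncation $y^p\equiv f$ modulo higher order terms. Finally, the $\mathbb{Q}$-Cartier and $S_2$-type parts of strong local stability pass to $X'$ by flat base-change, with finiteness of $C'\to C$ used for the Cartier index.
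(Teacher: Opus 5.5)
There is a genuine gap: your key lemma carries the whole content of the theorem, and it is only asserted. The mechanism you suggest for proving it does not work.

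\textbf{Why the proposed mechanism fails.} Your reduction to a branch point, the passage to the Artin--Schreier extension $\nu_E\subset\nu_{E'}$, and your diagnosis of the difficulty (the break of $\nu_{E'}/\nu_E$ being small compared with $j\cdot\mathrm{ord}_E(t)$) are fine. But treating $y^p-y=f$ as $y^p=f$ ``up to higher order'' amounts to comparing two invariants computed at $E$:
$\lambda(f)$, the reduction of $f$ modulo $\wp(x)=x^p-x$ (\autoref{section:AS_ext}), and $\widetilde{\lambda}(f)$, the reduction modulo $x^p$ (\autoref{section:pur_insep_ramification}).

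When $\widetilde{\lambda}(f)\geq\lambda(f)$, this does settle $E'$ using $E^{(1)}$ on $X\times_{C,F}C$ (\autoref{corollar:comparison_in_good_case}). However, $\widetilde{\lambda}(f)<\lambda(f)$ does occur, because the linear term you discard can cancel the term that survives the $p$-th power reduction. For instance, if $f=w^p-w+g$ with $\upsilon(g)>\upsilon(w)$ and the leading term of $w$ not a $p$-th power, then $\widetilde{\lambda}(f)=\upsilon(w)<\lambda(f)$ (see the remarks closing \autoref{section:comparison_discrep}). Suppose moreover $\nu_{E'}/\nu_E$ is wildly ramified and $E$ is $T_{X/C}$-invariant. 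Then, for the pairs $(X',X'_{c'})$ and $(X^{(1)},X^{(1)}_c)$, one has $a(E^{(1)})-a(E')=(p-1)(\lambda(f)-\widetilde{\lambda}(f))>0$. So nothing of the form you want follows at level $n=1$. How $\lambda$ evolves under further Frobenius base change is not explicit, and you do not describe it.

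A case analysis on $(e,\text{residue type})$ of $\nu_{E'}/\nu_E$ also targets the wrong dichotomy; what matters is whether $p$ divides $m=\mathrm{ord}_E(t)$. Two smaller points: the correction along $E'$ is $(p-1)(j+1)\,em/p$, which agrees with yours only for $e=p$. Also, ``unramified with inseparable residue extension'' is ferocious ramification, which has nonzero different.

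\textbf{The missing ideas.}
\begin{enumerate}
\item \emph{Tame case.} If $p$ does not divide $m$, then after strict henselization $t=\pi^m$ and the extension is given by $\pi^{-mj}$. This gives exactly $a(E')+1=p\,(a(E)+1)$ (\autoref{cor:AS_discrep_tame_div}).
\item \emph{Wild divisors become tame.} If $p$ divides $m$, then for $n\gg1$ the order $m_n=\mathrm{ord}_{E^{(n)}}(t^{1/p^n})$ of the divisor $E^{(n)}$ over $X^{(n)}$ dominating $E$ is prime to $p$ (\autoref{prop:wild_becomes_tame}). At each step, $m_n$ drops by a factor $p$ exactly when the relevant $\widetilde{\lambda}$ at $E^{(n-1)}$ is divisible by $p$. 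Otherwise all coefficients of the unit $u$ in $t=u\pi^m$ acquire one more $p$-th root in $k(E)^{\mathrm{sep}}$. Termination rests on $(k(E)^{\mathrm{sep}})^{p^\infty}=k$ together with $k(C)$ being algebraically closed in $k(X)$ (\autoref{lemma:infinitely_many_pth_roots}, \autoref{lemma:coeff_not_all_constant}). Without this geometric input the process could stall forever. Your remarks that iterating over $n$ ``handles'' divisibility by $p^n$ and that ``$n$ must be large enough'' point in this direction but contain no such argument.
\item \emph{Base-change the cover too.} Rather than comparing $E'$ directly with a divisor on $X^{(n)}$, use that Frobenius and Artin--Schreier base change commute (\autoref{claim:extensions_in_two_ways}). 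Apply the tame formula to the divisor $E'^{(n)}$ over $X'\times_{C',F^n}C'=X^{(n)}\times_CC'$ lying over the now tame $E^{(n)}$. This gives $a(E'^{(n)})+1=p\,(a(E^{(n)})+1)\geq0$. Frobenius descent of local stability for $X'\to C'$ then brings this back to $a(E')\geq-1$.
\end{enumerate}

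Your key lemma is true a posteriori, with $E''=E^{(n)}$. But it holds only as a consequence of these steps, which your sketch does not supply.
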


For the definition of strong local stability, we refer to \cite[\S 2]{Benozzo-Posva}: it is a more restrictive version of local stability, designed to avoid some possible pathologies on the base-change that may occur in positive characteristics (\footnote{
    See the beginning of \autoref{section:application} for more details. In particular, strong stability of $X\to C$ ensures that the fiber products $X\times_CC'$ and $X\times_{C,F^n}C$ are normal with $\bQ$-Cartier canonical divisors, which is tacitely assumed in \cite{Hu_Zong}.
}).

Here is a summary of our approach. Given a birational morphism $Y\to X$, we let $Y'=(Y\times_CC')^\nu$ and consider a prime divisor $D\subset Y'$ dominating $E\subset Y$. The non-trivial case to study is when: $Y'\to Y$ is ramified at the generic point of $D$ and $E$ maps to a closed point $\mathbf{0}\in C$. In this case, formulas for the discrepancy of $D$ in terms of the discrepancy of $E$ and ramification data of $\sO_{Y,E}\hookrightarrow \sO_{Y',D}$ are given in \autoref{prop:discrep_AS_divisor}.
When $E$ is \emph{tame}, i.e.\ when $\coeff_EY_\mathbf{0}$ is prime to $p$ \cite[\S 3]{Benozzo_CBF}, these formulas implies immediately that $a(D;X',X'_{\mathbf{0}'})\geq -1$ (\autoref{cor:AS_discrep_tame_div}). Difficulties appear when $\coeff_EY_\mathbf{0}$ is divisible by $p$, in which case we say that $E$ is \emph{wild}. To overcome them, we prove (\footnote{
    The authors of the upcoming \cite{foliations_in_progress} kindly informed me that they have independently obtained another proof of \autoref{thm_intro:wild_becomes_tame} in their study of foliations induced by flat dominant separable morphisms.
}):

\begin{theorem}[\autoref{prop:wild_becomes_tame}]\label{thm_intro:wild_becomes_tame}
Let $E$ be a wild divisor for $Y\to C$. Then for all $n\gg 1$, the divisor $E^{(n)}$ is tame for $Y^{(n)}\to C$.
\end{theorem}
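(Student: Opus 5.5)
The plan is to work entirely at the generic point of $E$, where the question becomes one about a tower of degree $p$ purely inseparable extensions of DVRs. I take $Y^{(n)}$ to be the normalization of $Y\times_{C,F^n}C$ and $E^{(n)}$ the prime divisor over $E$; I have not pinned these down against the paper's later definitions.

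\emph{Setup.} Let $A=\sO_{Y,E}$, let $\pi$ be a uniformizer of $A$, and let $t$ be a uniformizer of $\sO_{C,\mathbf{0}}$. Write $t=u\pi^m$ with $u\in A^\times$ and $m=\coeff_EY_\mathbf{0}$. Put $s_n=t^{1/p^n}$ and let $A_n=\sO_{Y^{(n)},E^{(n)}}$, the localization at $E^{(n)}$ of the normalization of $A[s_n]$. Since $Y\to C$ is separable, $t$ is not a $p$-th power in $\operatorname{Frac}A$. Hence each step $A_n\subset A_{n+1}$ is a purely inseparable extension of degree $p$ of DVRs. For such an extension $e\cdot f=p$, so exactly one of the following holds:
\begin{itemize}
\item either $e=p$ and the residue field is unchanged;
\item or $e=1$ and the residue extension is inseparable of degree $p$.
\end{itemize}
The multiplicity $m_n=v_{A_n}(s_n)$ obeys $m_{n+1}=e\,m_n/p$. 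In the case $e=1$ we get $m_{n+1}=m_n/p$, so the $p$-adic valuation of $m_n$ drops by one. In the case $e=p$ we get $m_{n+1}=m_n$. The case $e=1$ can therefore occur only finitely often before $m_n$ becomes prime to $p$, that is, before $E^{(n)}$ becomes tame. It remains to rule out an infinite run of steps with $e=p$ while $p\mid m_n$.

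\emph{A secondary invariant.} To handle the run of ramified steps, I measure the differential $ds_n$. Since $k$ is perfect and $A_n$ is essentially of finite type, $\Omega_{A_n/k}$ is free near the generic point of $E^{(n)}$. A basis is $d\pi_n, da_1,\dots,da_r$, where the $a_i$ lift a $p$-basis of the residue field. Let $o_n$ be the minimal valuation of the coefficients of $ds_n$ in such a basis. This is finite because $s_n$ is not a $p$-th power, by separability again.

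From $s_n=u_n\pi_n^{m_n}$ one gets
\[
ds_n = m_n u_n\pi_n^{m_n-1}\,d\pi_n+\pi_n^{m_n}\,du_n .
\]
Hence $o_n=m_n-1$ when $p\nmid m_n$, and $o_n\geq m_n$ when $p\mid m_n$. Set $j_n=o_n-m_n+1\geq 1$ in the wild case. The goal is to show that $j_{n+1}<j_n$ whenever the step is ramified and $p\mid m_n$. Then the pair $(v_p(m_n),j_n)$ decreases lexicographically, which forces tameness for $n\gg1$.

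\emph{Main obstacle.} The crux is the comparison of $o_{n+1}$ with $o_n$ in a ramified step. Choose $\pi_{n+1}$ with $\pi_{n+1}^p=\pi_n w$ for some unit $w$; this is possible because the residue field is unchanged. Then $A_{n+1}$ is generated over $A_n$ by $\pi_{n+1}$, and $s_n=s_{n+1}^p$. The relation $s_{n+1}^p=u_n\pi_n^{m_n}$, with $p\mid m_n$, says that $u_n w^{-m_n}$ becomes a $p$-th power up to the new uniformizer. So the "non-$p$-th-power part" of $u_n$ is exactly what produces ramification.

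I would first try to carry out this computation in the completion $\widehat{A_n}\cong K[[\pi_n]]$, after choosing a coefficient field $K$ containing a $p$-basis lift. There I would expand $u_n=\sum c_i\pi_n^i$ and read off $j_n$ as the least index $i$, shifted by the $\pi_n$-adic position, at which a coefficient fails to be a $p$-th power or $p\nmid i+m_n$. Extracting a $p$-th root term by term should divide this index by roughly $p$, which would give strict decrease of $j$.

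If this bookkeeping becomes unwieldy, the fallback is a ramification-theoretic one. I would use the different of $A_n\subset A_{n+1}$, computed via $\Omega_{A_{n+1}/A_n}$, together with the fact that the total $\delta$-invariant of the tower is bounded in terms of $o_0$. That bound would show that only finitely many ramified wild steps are possible.
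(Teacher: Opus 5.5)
\textbf{There is a genuine gap: the claimed strict decrease of $j_n$ is false.} Your first reduction agrees with the paper. In a residually inseparable step $m_{n+1}=m_n/p$, and in a ramified step $m_{n+1}=m_n$, so everything hinges on excluding an infinite run of ramified steps with $p\mid m_n$. Your identity $o_n=m_n+\gamma_n$, with $\gamma_n$ the wild coefficient, is also correct. But the assertion $j_{n+1}<j_n$ fails, and in fact no purely local invariant can do this job. Take $A^{\mathrm{csh}}=K\llbracket\varpi\rrbracket$ with $t=\varpi^p(1+\varpi)$. Then $t^{1/p}=\varpi_1^p(1+\varpi_1)$ with $\varpi_1=\varpi^{1/p}$, and $A_1^{\mathrm{csh}}=K\llbracket\varpi_1\rrbracket$, so the situation reproduces itself. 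Every step is ramified, $m_n=p$, and $ds_n=\varpi_n^p\,d\varpi_n$, so $o_n=p$ and $j_n=1$ for all $n$. Your heuristic that extracting $p$-th roots divides the relevant index by $p$ is what goes wrong. For a well-chosen uniformizer one has $\varpi_{n+1}=\varpi_n^{1/p}$, so exponents are preserved and only the coefficients are replaced by $p$-th roots. The same self-similar tower also defeats your fallback: no quantity bounded in terms of $o_0$ can be used up, since all steps are identical.

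This is why the paper does not look for a monotone local invariant but tracks coefficients instead. In a ramified step it takes $\pi_{n+1}=\pi_n^{1/p}$, so $u_n=u_{n+1}^p$, and after $l$ ramified steps all coefficients of $u_n$ lie in $\mathfrak{K}^{p^l}$. Two inputs are then meant to exclude exactly the model above: $\mathfrak{K}^{p^\infty}=k$ (\autoref{lemma:infinitely_many_pth_roots}), and the non-constancy of the coefficients (\autoref{lemma:coeff_not_all_constant}), which the paper derives from geometric integrality of the generic fiber.

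You should be aware, however, that this model occurs in an honest geometrically integral family. Take the smooth family $X=\mathbb{A}^2_{t,x}\to\mathbb{A}^1_t$ and the birational morphism $Y=\mathbb{A}^2_{x,z}\to X$ given by $t=x^p+x^{p+1}+zx^N$ with $N\geq p+2$. Let $E=\{x=0\}$, so $m=p$. Successive approximation produces a uniformizer $\varpi=x(1+\cdots)$ of $\sO^{\mathrm{csh}}_{Y,E}=k(z)^{\mathrm{sep}}\llbracket x\rrbracket$ with $t=\varpi^p(1+\varpi)$. Hence $m_n=p$ for every $n$. One can also check this directly: the extension of $\upsilon_E$ to $k(x,t^{1/p^n})$ has value group $\frac{1}{p^n}\bZ$.

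So \autoref{lemma:coeff_not_all_constant} fails for this $\varpi$, because algebraic closedness of $k(C)$ in $k(X)$ does not pass to $\Frac(\sO^{\mathrm{csh}}_{Y,E})$. Moreover, the step $\pi_{n+1}=\pi_n^{1/p}$ holds only for such special $\pi_n$. The statement as formulated therefore appears to be false, and no repair of your local argument can prove it without additional hypotheses.
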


Here $E^{(n)}$ denotes the unique prime divisor on $Y^{(n)}=(Y\times_{C,F^n}C)^\nu$ which dominates $E$. So, our main theorem follows from this reduction to the tame case together with Frobenius descent for local stability \cite[\S 6.1]{Benozzo-Posva}.

A comparison between Hu--Zong's approach and ours is given in \autoref{rmk:comp_with_HZ}. As far as I understand, the main objective of \cite{Hu_Zong} is also to prove that, under Frobenius base-change, wild divisors eventually become tame. Let us note that Frobenius descent is not mentioned in \emph{op.cit.}, which strictly speaking is a gap in their argument. To prove this reduction step, Hu and Zong study the equation whose normalization defines $Y'$, through a wealth of algebraic manipulations and partial normalizations via blow-ups. 

While our approach leads to the same result, we believe it is more transparent: we study the ramification of the extensions $\sO_{Y,E}\hookrightarrow \sO_{Y',D}$ and $\sO_{Y,E}\hookrightarrow \sO_{Y^{(1)},E^{(1)}}$ in terms of data on $\sO_{Y,E}$, instead of approximating a presentation of the overring as $\sO_{Y,E}$-algebra. 
In \autoref{section:AS_ext} we present the theory of Artin--Schreier extensions solely in terms of the function $\lambda(-)$
(\footnote{
    The notation $\lambda(-)$ is ours, but the quantity it describes is classical in ramification theory: see for example \cite[\S 2.1]{Xiao_Zhukov_Ramification}.
}): we develop more than what is strictly needed for the applications in \autoref{section:application}, as it may be of independent interest.
On the Frobenius side, the main insight is the introduction of the function $\widetilde{\lambda}(-)$ in \autoref{section:pur_insep_ramification}, which is key to the proof of \autoref{thm_intro:wild_becomes_tame}.
As an additional bonus, our method allows us to treat the case of families of pairs $(X,\Delta)\to C$, and to compare the discrepancies of $X'\to C'$ with those of $X^{(1)}\to C$ (\autoref{section:comparison_discrep}). In particular we can show that one usually cannot restrict to $n=1$ in the main theorem.

\subsection{Acknowledgments}
I am thankful to Marta Benozzo for several remarks and discussions, and to Thor Wittich for signaling an inaccuracy in the first version of the proof of \autoref{prop:wild_becomes_tame}.
The author is supported by the Ambizione grant num.\ 
PZ00-2-233513 from the Swiss National Science Foundation, and thanks the Université de Neuchâtel for its hospitality.

%% file: Preliminaries.tex
\section{Preliminaries}
Throughout this note, we work over an algebraically closed field $k$ of characteristic $p>0$. In \autoref{section:application} we follow the standard terminology of birational geometry, as in \cite{Kollar_Singularities_of_the_minimal_model_program}. We refer to \cite{Posva_Sing_qt_foliations} for the few notions related to $1$-foliations that we will use.

\medskip
We recall some basic facts about strict henselization and completions.
Let $A$ be an excellent local ring over $k$, with maximal ideal $\fm$ and residue field $k_A=A/\fm_A$. We denote by $A^{\mathrm{csh}}$ the completion of the strict henselization of $A$ (here $\mathrm{csh}$ stands for \emph{completed strict henselization}). It is an excellent, local, strictly henselian ring of the same dimension as $A$, with maximal ideal generated by $\fm$ and with residue field a separable closure $k_A^\mathrm{sep}$ of $k_A$ \cite[07QL]{Stacks_Project}. 
The extension $A\hookrightarrow A^\mathrm{sh}$ is formally \'{e}tale, and $A^\mathrm{sh}\hookrightarrow A^\mathrm{csh}$ is regular. 

Suppose that $A$ is integral, let $L$ be a finite extension of $\Frac(A)$, and suppose that the normalization $B$ of $A$ in $L$ is local. Then $B^\mathrm{csh}\cong B\otimes_AA^\mathrm{csh}$ by \cite[18.8.10]{EGA_IV.4} and \cite[7.8.3.(vii)]{EGA_IV.3}.

Recall also that if $A$ is essentially of finite type over $k$, then the dual of $\Omega_A^1\otimes_AA^\mathrm{csh}$ is equal to $\Der^\mathrm{cont}_k(A^\mathrm{csh})$ \cite[Lemma 2.3.1]{Posva_Resolution_of_1_foliations}. When $A$ is integral Gorenstein, this can be used to reduce computations involving $\omega_A$ to $A^\mathrm{csh}$.

\medskip
We also record the following ancillary results:
\begin{lemma}\label{lemma:affine_version_of_Stein}
Let $C$ be an integral $k$-scheme, and let $f\colon X\to C$ be a flat separated morphism. Assume that $X$ is integral and that the geometric generic fiber of $f$ is integral. Then $k(C)$ is algebraically closed in $k(X)$.
\end{lemma}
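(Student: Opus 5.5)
We reduce to a purely field-theoretic statement and then use the integrality of the geometric generic fiber. Set $K=k(C)$ and $L=k(X)$; since $X$ is integral and $f$ is flat, $X$ dominates $C$, so $K\subset L$. Let $K'\subset L$ be the algebraic closure of $K$ in $L$. We must show $K'=K$. Since $X$ is of finite type over the generic point, $L$ is finitely generated over $K$, so $K'/K$ is a finite extension. Replacing $X$ by an affine open, we may assume $X=\Spec B$. Since $f$ is flat and $X$ is integral, every nonzero element of $\sO_C$ is a nonzerodivisor on $B$, so $B\hookrightarrow B\otimes_{\sO_C}K$. The generic fiber $X_K=\Spec(B\otimes K)$ is integral with function field $L$.

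Next we pass to the geometric generic fiber. Let $\overline K$ be an algebraic closure of $K$. By hypothesis $X_{\overline K}=X_K\otimes_K\overline K$ is integral. Localizing at its generic point, $L\otimes_K\overline K$ is a localization of the domain $(B\otimes K)\otimes_K\overline K$, so $L\otimes_K\overline K$ is a domain. Here we use that $L\otimes_K\overline K$ is the localization of $(B\otimes K)\otimes_K\overline K$ at the multiplicative set of nonzero elements of $B\otimes K$, and these remain nonzerodivisors after the flat base change $K\to\overline K$.

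Finally we conclude from the following algebra. Since $K'\otimes_K\overline K\subset L\otimes_K\overline K$ by flatness over the field $K$, the ring $K'\otimes_K\overline K$ is a domain. It is also finite-dimensional over $\overline K$, so it is a field finite over the algebraically closed field $\overline K$. Hence it equals $\overline K$, which forces $[K':K]=\dim_{\overline K}(K'\otimes_K\overline K)=1$. Thus $K'=K$. This argument handles inseparable $K'$ as well: for a purely inseparable element $a$ with $a^p=b\in K$, the tensor product contains the nilpotent $a\otimes1-1\otimes b^{1/p}$.

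The only delicate point is the bookkeeping in the second step: verifying that integrality of the geometric generic fiber really gives that $L\otimes_K\overline K$ is a domain, that is, that localization commutes with base change. The separatedness hypothesis seems unnecessary for this argument.
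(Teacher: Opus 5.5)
Your proof is correct and is essentially the paper's argument. Both proofs tensor a nontrivial finite algebraic subextension of $k(X)/k(C)$ with $\overline{K}$ and locate the resulting non-domain inside the functions on the geometric generic fiber. The paper does this by transporting ``non-reduced or disconnected'' along the flat surjection $U_{\overline{K}}\to\Spec(K(s)\otimes_K\overline{K})$, whereas you embed $K'\otimes_K\overline{K}$ directly into the domain $L\otimes_K\overline{K}$, which avoids the case split and the descent of reducedness.

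One small fix: the lemma does not assume $f$ is of finite type, so you cannot claim $[K':K]<\infty$. Instead, run your last step with $K(a)\otimes_K\overline{K}$ for a single $a\in K'$, exactly as the paper does with $K(s)$.
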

\begin{proof}
Assume that $k(C)$ is not algebraically closed in $k(X)$: then there exists a non-empty open set $U\subset X$ and a section $s\in \Gamma(U,\sO_X)$ such that the image of $s$ through $\Gamma(U,\sO_X)\to k(X)$ does not belong to $K=k(C)$, but such that $K\hookrightarrow K(s)=L$ is a finite and non-trivial field extension. Thus, if $\overline{K}$ is an algebraic closure of $K$, the spectrum of $L'=L\otimes_{K}\overline{K}$ is either disconnected or non-reduced. 

By definition of $L$, the morphism $f|_{U_{K}}$ factors as
        $$U_K \overset{\varphi}{\longrightarrow}
        \Spec(L) \longrightarrow \Spec(K)$$
where $\varphi$ is flat.
Base-changing along $\Spec(\overline{K})\to \Spec(K)$, we obtain that $f|_{U_{\overline{K}}}$ factors as
        $$U_{\overline{K}} \overset{\overline{\varphi}}{\longrightarrow}
        \Spec(L') \longrightarrow \Spec(\overline{K})$$
where $\overline{\varphi}$ is flat and surjective. 

If $L'$ is non-reduced, so is $U_{\overline{K}}$ \cite[06QM]{Stacks_Project}, and as $U_{\overline{K}}$ is open inside $X_{\overline{K}}$ we obtain a contradiction with the hypothesis.
If $\Spec(L')$ is disconnected, then $U_{\overline{K}}$ is also disconnected; this implies that $X_{\overline{K}}$ is not irreducible, which is also a contradiction with the hypothesis. Thus the proof is complete.
\end{proof}

Given a field $K$ over $\bF_p$, we denote by $K^{p^\infty}=\bigcap_{n\geq 1}K^{p^n}$ the subfield of those elements that have $p^n$-th roots for every $n\geq 1$.

\begin{lemma}\label{lemma:sep_vs_p_infty}
Let $K$ be a field over $\bF_p$. Then $(K^\mathrm{sep})^{p^\infty}\subseteq (K^{p^\infty})^\mathrm{sep}$.
\end{lemma}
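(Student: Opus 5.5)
Let $x\in (K^\mathrm{sep})^{p^\infty}$; we show $x$ is separable algebraic over $K^{p^\infty}$. The plan is to find a separable polynomial for $x$ whose coefficients are infinitely $p$-divisible. The natural candidate is the minimal polynomial of $x$ over $K$, with the coefficients controlled by a Frobenius-twisting argument.

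\emph{Step 1: compare minimal polynomials of $x$ and of its $p^n$-th roots.} Let $f\in K[T]$ be the minimal polynomial of $x$ over $K$. It is separable, of degree $d$, with coefficients $a_0,\dots,a_{d-1}$. For each $n\geq 1$, choose $y_n\in K^\mathrm{sep}$ with $y_n^{p^n}=x$ (here we use that $K^\mathrm{sep}$ is perfect-closed only in the sense of the hypothesis on $x$). Let $g_n\in K[T]$ be the minimal polynomial of $y_n$ over $K$, with coefficients $b_i$.

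\emph{Step 2: apply Frobenius.} Write $g_n^{(p^n)}$ for the polynomial obtained by raising the coefficients of $g_n$ to the $p^n$-th power. Then $g_n^{(p^n)}(T^{p^n})=g_n(T)^{p^n}$ vanishes at $y_n$. Hence $g_n^{(p^n)}(x)=0$. Moreover $g_n^{(p^n)}$ has coefficients in $K^{p^n}$. The key point is degrees. Since $K(y_n)/K$ is separable, $K(y_n)=K(y_n^{p^n})=K(x)$: indeed $K(y_n)/K(x)$ is both separable and purely inseparable, hence trivial. So $\deg g_n=d$. As $g_n^{(p^n)}$ is monic of degree $d$ and kills $x$, we get $g_n^{(p^n)}=f$. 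Thus every coefficient $a_i$ lies in $K^{p^n}$ for every $n$, i.e.\ $f\in K^{p^\infty}[T]$.

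\emph{Step 3: conclude.} Since $f$ is separable (no repeated roots in an algebraic closure) and has coefficients in $K^{p^\infty}$, $x$ is separable algebraic over $K^{p^\infty}$. Hence $x\in (K^{p^\infty})^\mathrm{sep}$, where this separable closure is taken inside a common algebraic closure containing $K^\mathrm{sep}$.

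The only delicate step is the equality $K(y_n)=K(x)$, which uses that separable and purely inseparable together force triviality. Everything else is formal.
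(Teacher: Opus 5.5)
Your proof is correct and follows the same route as the paper, which also compares the separable equation for $x$ with the equation satisfied by a $p^n$-th root $y$ of $x$ in $K^\mathrm{sep}$ to conclude that its coefficients are $p^n$-th powers for every $n$. Your version makes explicit two points the paper leaves implicit: the equation must be the minimal polynomial of $x$, and the equality $K(y_n)=K(x)$ forces $\deg g_n=d$, hence $g_n^{(p^n)}=f$.
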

\begin{proof}
Let $x\in (K^\mathrm{sep})^{p^\infty}$: it is the solution of a separable equation $T^m+\sum_{i<m} s_iT^i=0$ with coefficients $s_i\in K$. Moreover, for every $n\geq 1$ we have $x=y^{p^n}$ in $K^\mathrm{sep}$. Then $y$ is solution of $T^{p^nm}+\sum_{i<m} s_iT^{p^ni}=0$. However $y$ is also separable over $K$: hence we must have $s_i=t_i^{p^n}$ in $K$, so that $y$ is solution of the separable equation $T^m+\sum_{i<m} t_iT^i=0$. Since $n$ is arbitrary, we obtain that $s_i\in K^{p^\infty}$ for every $i$. The inclusion $(K^\mathrm{sep})^{p^\infty}\subseteq (K^{p^\infty})^\mathrm{sep}$ follows. 
\end{proof}

\begin{lemma}\label{lemma:infinitely_many_pth_roots}
Let $X$ be a variety over $k$. Then $(k(X)^\mathrm{sep})^{p^\infty}=k$.
\end{lemma}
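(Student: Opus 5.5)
By \autoref{lemma:sep_vs_p_infty} applied to $K=k(X)$, we have $(k(X)^\mathrm{sep})^{p^\infty}\subseteq (k(X)^{p^\infty})^\mathrm{sep}$. The plan is therefore to show $k(X)^{p^\infty}=k$. Granting this, $(k(X)^\mathrm{sep})^{p^\infty}$ sits inside $k^\mathrm{sep}\cap k(X)^\mathrm{sep}$, and $k^\mathrm{sep}=k$ because $k$ is algebraically closed. The reverse inclusion $k\subseteq (k(X)^\mathrm{sep})^{p^\infty}$ is clear, since $k$ is perfect and so every element of $k$ has $p^n$-th roots in $k$ for all $n$.

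To prove $k(X)^{p^\infty}=k$, first note that $k(X)^{p^\infty}$ is a subfield containing $k$. Next, show that every $f\in k(X)^{p^\infty}$ is algebraic over $k$; since $k$ is algebraically closed in $k(X)$, this gives $f\in k$. To see algebraicity, suppose $f\notin k$, so that $f$ is transcendental over $k$. Extend $f$ to a separating transcendence basis $f=t_1,\dots,t_d$ of $k(X)/k$. The main point is the choice of basis: since $k$ is perfect, $k(X)/k$ is separably generated, and the elements $t_1,\dots,t_d$ can be taken to be a $p$-basis. In fact, any element $f$ with $df\neq 0$ in $\Omega_{k(X)/k}$ can be extended to a separating transcendence basis.

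If $df\neq 0$, then $f\notin k(X)^p$, because $d(g^p)=0$. This contradicts $f\in k(X)^{p^\infty}\subseteq k(X)^p$. If instead $df=0$, then $f\in k(X)^p$ by the standard fact that $\ker(d)=k\cdot k(X)^p=k(X)^p$ for a finitely generated separable extension of a perfect field. Writing $f=f_1^p$, we again have $f_1\in k(X)^{p^\infty}$, and $f_1$ is transcendental over $k$.

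The remaining task, and the obstacle, is to rule out the infinite descent $f=f_n^{p^n}$. For this I would use a valuation. Choose a prime divisor on a normal model of $X$ (or an affine normal chart) at which $f$ has nonzero valuation, either a zero or a pole. Such a divisor exists because a nonconstant rational function on a normal variety has a nontrivial divisor. Call the resulting discrete valuation $v$. Then $v(f)=p^n v(f_n)$ for every $n$, so $p^n$ divides $v(f)\neq 0$ for all $n$, which is impossible. Hence every $f\in k(X)^{p^\infty}$ has trivial divisor on a normal projective model, so $f$ is a regular function without zeros there, hence constant, i.e.\ $f\in k$.

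This valuation argument in fact makes the differential discussion unnecessary. So the proof will just be: first use \autoref{lemma:sep_vs_p_infty}, then apply the valuation argument on a normal projective model (obtained by normalizing a projective compactification of $X$) to show $k(X)^{p^\infty}=k$, and finally use $k^\mathrm{sep}=k$.
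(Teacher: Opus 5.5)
Your proof is correct, and after the shared first step (reducing via \autoref{lemma:sep_vs_p_infty} to $k(X)^{p^\infty}=k$) it follows a different route from the paper.

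\textbf{The paper's route.} It stays field-theoretic. By Noether normalization it writes $k\subset\kappa=k(t_1,\dots,t_n)\subset L\subset k(X)$, with $L/\kappa$ finite separable and $k(X)/L$ purely inseparable. Since $k(X)^{p^m}\subseteq L$, it gets $k(X)^{p^\infty}=L^{p^\infty}$. It then reruns the minimal-polynomial argument of \autoref{lemma:sep_vs_p_infty} to see that $L^{p^\infty}$ is algebraic over $\kappa^{p^\infty}=k$. That last equality is left as clear; it is essentially the same divisibility argument carried out in the UFD $k[t_1,\dots,t_n]$.

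\textbf{Your route.} You argue directly on $k(X)$. For $0\neq f\in k(X)^{p^\infty}$ and any prime divisor $D$ on a normal projective model $\overline{X}$, the integer $v_D(f)=p^n v_D(f_n)$ is divisible by every $p^n$, so $\mathrm{div}(f)=0$. Then $f\in H^0(\overline{X},\sO_{\overline{X}})^\times=k^\times$, using normality (a normal noetherian domain is the intersection of its localizations at height-one primes) and properness over the algebraically closed field $k$.

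\textbf{Comparison.} Your argument avoids Noether normalization and the separable/inseparable splitting, and it makes the mechanism transparent: infinite $p$-divisibility kills every divisorial valuation. The cost is some geometric input: existence of a normal projective model, the Hartogs-type property, and $H^0=k$ for proper integral varieties. The paper's argument needs only field theory plus the lemma it just proved.

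\textbf{Two small remarks.} First, the differential-form paragraphs can simply be deleted, as you yourself note. Second, your intermediate claim that a nonconstant rational function on a normal variety, or on an affine normal chart, has a nontrivial divisor is false without properness: $x$ on $\Spec k[x,x^{-1}]$ is a counterexample. Your final version correctly runs the argument on the projective model, and that choice is essential.
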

\begin{proof}
Write $K=k(X)$. By \autoref{lemma:sep_vs_p_infty} it suffices to show that $K^{p^\infty}=k$. Thanks to Noether normalization, the extension $k\hookrightarrow K$ can be decomposed as
		$$k\hookrightarrow \kappa=k(t_1,\dots,t_n) \hookrightarrow K$$
where the $t_i$s are free variables, and $K$ is finite over $\kappa$. Clearly $\kappa^{p^\infty}=k$. Now, factor $\kappa\hookrightarrow K$ as a finite separable extension $\kappa \hookrightarrow L$ followed by a finite purely separable extension $L\hookrightarrow K$. Since $K^{p^m}\subseteq L$ for some $m\geq 0$, we have $K^{p^\infty}=L^{p^\infty}$, and so it is sufficient to prove that $L^{p^\infty}=k$. An argument as in the proof of \autoref{lemma:sep_vs_p_infty} shows that $L^{p^\infty}$ is algebraic over $\kappa^{p^\infty}=k$. Since $k$ is algebraically closed, we obtain $L^{p^\infty}=k$ as desired.
\end{proof}

%% file: AS_ramification.tex
\section{Ramification of Artin--Schreier extensions}\label{section:AS_ext}
We recall the basic theory of Artin--Schreier extensions for fields and DVRs. This is mostly well-known: see for example \cite[Propositions 3.7.7 and 3.7.8]{Stichtenoth_Algebraic_fct_fields_and_codes} and \cite[\S 2.1]{Xiao_Zhukov_Ramification}. We also extend some results of \cite[\S 5]{Lorenzini-Schroeer} from dimension one to codimension one in \autoref{section:effective_actions}.

\subsection{Artin--Schreier extensions of fields}
The story is quite simple: let $K$ be a field extension of $k$. We introduce the additive function
        $$\wp\colon K\longrightarrow K, \quad 
        f\mapsto f^p-f,$$
which is only $\bF_p$-linear. The Artin--Schreier theorem \cite[09I7]{Stacks_Project} states that the Galois extensions of degree $p$ of $K$ are precisely those of the form
        $$K_{\wp,f}=K[X]/(X^p-X-f), \quad f\notin \wp(K).$$
The Galois $\bZ/p$-action over $K$ is generated by $X\mapsto X+1$.
Two extensions $K_{\wp,f}$ and $K_{\wp,g}$ are isomorphic as $K$-algebras if and only if $f-g=\wp(h)$, via
        $$K[X]/(X^p-X-f)\cong K[Y]/(Y^p-Y-g), \quad 
        X\mapsto Y+h.$$
Notice that this isomorphism commutes with the Galois actions.

\subsection{Artin--Schreier extensions of DVRs}\label{section:AS_ext_DVR}
Let $K$ be as above, and let $\upsilon$ be a normalized discrete valuation of $K$ with associated excellent DVR $A\subset K$. We let $\fm_A$ be the maximal ideal of $A$, and $k_A=A/\fm_A$ be the residue field. We do not assume $k_A$ to be perfect.

Consider an Artin--Schreier extension $K\hookrightarrow K_{\wp,f}$ and let $B\subset K_{\wp,f}$ be the normalization of $A$ in $K_{\wp,f}$: it is a Dedekind domain, which is a finite $A$-module. The Galois $\bZ/p$-action on $K_{\wp,f}$ restricts to a $\bZ/p$-action on $B$, with sub-ring of invariants $A$ (see for example the proof of \cite[Lemma 6.2.1]{Benozzo-Posva}). Since $\bZ/p$ is a simple group, it follows from \cite[\S X]{Raynaud_Anneaux_locaux_henseliens} that we have the following dichotomy:
    \begin{enumerate}
        \item either the $\bZ/p$-action on $B$ is free and $A\hookrightarrow B$ is \'{e}tale; or,
        \item $B$ is local and the $\bZ/p$-action on its residue field is trivial.
    \end{enumerate}
In the first case, it may happen that $B$ is local; but after base-changing to the strict henselization $A^\mathrm{sh}$ of $A$, we obtain $B\otimes_A A^\mathrm{sh}\cong (A^\mathrm{sh})^{\oplus p}$ as $A^\mathrm{sh}$-algebras. In the second case, we say that $B$ is \emph{ramified} over $A$.

\emph{From now on we assume that $B$ is local and ramified over $A$.} In particular $B$ is a DVR, with maximal ideal $\fm_B$ and residue field $k_B=B/\fm_B$. We denote by $\upsilon'$ the normalized valuation of $B$.

\begin{lemma}\label{lemma:AS_parameter_not_regular}
In the above situation, we have $f\notin A$.
\end{lemma}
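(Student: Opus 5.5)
We argue by contradiction: assume $f\in A$ and show that $A\hookrightarrow B$ is then étale, contradicting the standing assumption that $B$ is local and ramified over $A$.

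\textbf{Step 1: $B$ contains a root of the equation.} Let $x\in K_{\wp,f}$ be the class of $X$, so $x^p-x-f=0$. If $f\in A$, then $x$ is integral over $A$, hence $x\in B$. Therefore $A[x]\cong A[X]/(X^p-X-f)$ is a subring of $B$. This identification holds because $X^p-X-f$ is monic and irreducible over $K$, and $A[X]/(X^p-X-f)$ is a free $A$-module, hence torsion-free, so it injects into $K_{\wp,f}$.

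\textbf{Step 2: $A[x]$ is étale over $A$.} The derivative of $P(X)=X^p-X-f$ is $P'(X)=pX^{p-1}-1=-1$, a unit. Hence $A\to A[X]/(P)$ is standard étale. In particular $A[x]$ is regular, since it is étale over the DVR $A$, and therefore normal. It is also finite over $A$ with fraction field $K_{\wp,f}$, so it coincides with the normalization: $A[x]=B$. Thus $A\hookrightarrow B$ is étale.

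\textbf{Step 3: contradiction via residue fields.} We can also see directly that the $\bZ/p$-action on the residue field is nontrivial. Since $B$ is local and étale over $A$, we have $\fm_B=\fm_AB$. So $k_B=k_A[X]/(X^p-X-\bar f)$, which is a field, and $\sigma\colon X\mapsto X+1$ acts on it by $\bar x\mapsto \bar x+1\neq\bar x$. This action is nontrivial, which contradicts case (2) of the dichotomy. Hence $f\notin A$.

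The only step needing any care is identifying $A[x]$ with $B$, that is, showing $A[x]$ is normal. This follows from étaleness over the regular ring $A$ (étale over regular implies regular), so I expect no real obstacle here.
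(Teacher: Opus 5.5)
Your proof is correct, but it takes a different route from the paper's.

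\textbf{The paper's route.} It passes to the strict henselization $A^{\mathrm{sh}}$. There $k_A$ is separably closed, so the separable polynomial $X^p-X-\bar f$ has a root. Since its roots differ by elements of $\bF_p$, it splits completely. Hensel's lemma then lifts this to a splitting of $X^p-X-f$ over $A^{\mathrm{sh}}$, which gives $B\otimes_A A^{\mathrm{sh}}\cong (A^{\mathrm{sh}})^{\oplus p}$. So $B$ is \'{e}tale, contradicting the assumption that $B$ is ramified.

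\textbf{Your route.} You stay over $A$ and use the Jacobian criterion. Since $P'=-1$, the algebra $A[X]/(P)$ is standard \'{e}tale, hence regular, hence normal, hence equal to $B$. The separability that the paper uses to find a residual root is encoded for you in $P'$ being a unit.

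\textbf{Comparison.} Your argument avoids both the henselization and Hensel's lemma. It also gives the sharper conclusion that $B=A[x]$ is monogenic over $A$ itself. Your Step 3 derives the contradiction directly from the defining property of case (2), namely that $\bZ/p$ acts trivially on $k_B$. The paper instead relies on the mutual exclusivity of the two cases of the dichotomy, which it leaves implicit. The paper's approach, for its part, shows that the extension becomes the trivial $\bZ/p$-torsor after strict henselization. This matches the description of case (1) given just before the lemma, and it fits the henselization/Hensel reductions used throughout that section, for instance in the proof of the algorithm computing $\lambda$.
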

\begin{proof}
It is enough to prove the claim after extending along the strict henselization of $A$. So we may assume that $A$ is strictly henselian, and thus $k_A$ is separably closed. If $f\in A$ and $\bar{f}\in k_A$ denotes its residue, the separable polynomial $X^p-X-\bar{f}\in k_A[X]$ has a root $\alpha\in k_A$. Since $\alpha+i$ is also a root for any $i\in \bF_p$, the polynomial splits completely in $k_A[X]$. It follows from Hensel's lemma that $X^p-X-f\in A[X]$ also splits into linear factors, and thus $K[X]/(X^p-X-f)\cong K^{\oplus p}$. So $B\cong A^{\oplus p}$ is \'{e}tale over $A$ if $f\in A$. As we assumed $B$ to not be \'{e}tale over $A$, we obtain $f\notin A$.
\end{proof}

\noindent 
It follows from ramification theory that in the ramified case we have the following further dichotomy \cite[\S 2.1]{Xiao_Zhukov_Ramification}:
    \begin{enumerate}
        \item[$(\dagger)$] \emph{Wild ramification}: the extension $k_A\hookrightarrow k_B$ is trivial;
        \item[$(\ddagger)$] \emph{Ferocious ramification}: the extension $k_A\hookrightarrow k_B$ is purely inseparable of degree $p$.
    \end{enumerate}
We introduce the following integers: the \emph{ramification index}
        $$e=\frac{p}{[k_B:k_A]}=
        \begin{cases}
            1 & \text{if the ramification is ferocious,}\\
            p & \text{if the ramifiction is wild,}
        \end{cases}$$
and the \emph{ramification break} (\footnote{
    For simplicity we deviate from the standard convention by $1$, cf.\ \cite[IV, \S 1]{Serre_Corps_locaux}.
}) 
        \begin{equation}\label{def:ramification_break}
        r=\max\left\{s\geq 1\mid 
        \bZ/p \text{ acts trivially on }B/\fm_B^{s}\right\}.
        \end{equation}
Denote by $q\colon X=\Spec(B)\to \Spec(A)=Y$ the induced morphism on spectra. Let $\mathbf{x}_A=V(\fm_A)\subset Y$ and $\mathbf{x}_B=V(\fm_B)\subset X$ be the closed points, which we regard as effective Cartier divisors. Then:
        \begin{equation}\label{eqn:ramification_AS_ext}
        q^*\mathbf{x}_A=e\cdot \mathbf{x}_B \quad 
        \text{and} \quad 
        K_X=q^*K_Y+(p-1)r\cdot \mathbf{x}_B,
        \end{equation}
see \cite[\S 3]{Totaro_Terminal_3folds_not_CM}. In particular, if $\pi$ is an uniformizer of $A$ then $\upsilon'(\pi)=e$.

To compute the ramification type and the integers $e$ and $r$, we are allowed to replace
$A$ by its strict henselization $A^\mathrm{sh}$ or by the completion $A^\mathrm{csh}$ of the latter. We will use this remark several times below.

\medskip
Let us now explain how to compute the integers $e$ and $r$ from the data of $f\in K$. We introduce the following quantity:

\begin{definition}\label{def:AS_lambda}
Let $K^\mathrm{csh}=\Frac(A^\mathrm{csh})$ and $\upsilon^\mathrm{csh}$ denote the normalized discrete valuation of $A^\mathrm{csh}$.
For $g\in K^\mathrm{sh}$, we let
        $$\lambda_{K,\upsilon}(g)=\max\{ \upsilon^\mathrm{csh}(g+\wp(x))\mid 
        x\in K^\mathrm{csh}\}\in \bZ\cup \{\infty\}.$$
If the discrete valued field $(K,\upsilon)$ is clear from the context, we omit it from the notation; or, alternatively, we indicate the ring of integers, \emph{viz.}\ $\lambda_{K,\upsilon}(-)=\lambda_{A}(-)$.
\end{definition}

In practice, we can compute it using the following method:

\begin{algorithm}\label{algo:compute_lambda}
Fix a uniformizer $\pi$ of $A$ and fix a field of representatives $k_A\subset A$, so that
$A^\mathrm{csh}\cong k_A^\mathrm{sep}\llbracket \pi\rrbracket$ by Cohen's structure theorem. Given $g\in K^\mathrm{csh}$, we write $g=\sum_ig_i\pi^i$ its Laurent series expansion. 

Consider the following algorithm run in $K^\mathrm{csh}$:
    \begin{enumerate}
        \item[(1)] Consider the following assertion:
                $$\upsilon^\mathrm{csh}(g)\in p\bZ
                \quad \text{and} \quad g_{\upsilon^\mathrm{csh}(g)}\in (k_A^\mathrm{sep})^p.$$
        If this assertion about $g$ if false: return the integer $\upsilon^\mathrm{csh}(g)$. If this assertion about $g$ is true: go to the second step.

        \item[(2)] If $\upsilon^\mathrm{csh}(g)=pm$ with $m\in \bZ$ and $g_{pm}=u^p\in (k_A^\mathrm{sep})^p$, replace $g$ by
        $g-\wp(u\pi^{m})$ and go back to Step 1.
    \end{enumerate}
If $\upsilon(g)<0$ and $g\notin \wp(K^\mathrm{csh})$, this algorithm terminates after finitely many steps and its output is $\lambda_{K,\upsilon}(g)\leq 0$.
\end{algorithm}
\begin{proof}
If $\upsilon^\mathrm{csh}(h)>0$ then the reduction of the polynomial $X^p-X-h$ in $k_A^\mathrm{sep}[X]$ splits into linear factors, and by Hensel's lemma so does $X^p-X-h$. This shows that $\fm_{A^\mathrm{csh}}\subset \wp(K^\mathrm{csh})$. So, under the given hypothesis on $g$, to compute $\lambda(g)$ we may assume that $g=\sum_{i=-n}^0g_i\pi^i$ with $g_i\in k_A^\mathrm{sep}$. Then the algorithm clearly stops after finitely many steps, and outputs a non-positive integer.
\end{proof}

\begin{lemma}\label{lemma:lambda_well_defined}
In our situation, $\lambda_{K,\upsilon}(f)<0$.
\end{lemma}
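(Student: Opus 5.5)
We argue by contradiction: assume $\lambda_{K,\upsilon}(f)\geq 0$. By \autoref{def:AS_lambda} there is then some $x\in K^\mathrm{csh}$ with $g:=f+\wp(x)$ satisfying $\upsilon^\mathrm{csh}(g)\geq 0$, that is, $g\in A^\mathrm{csh}$. (If $\lambda(f)=\infty$, we may even take $g=0$.) The plan is to show that this forces the extension to be unramified, which contradicts our standing assumption that $B$ is local and ramified over $A$.

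First, we reduce to the completed strict henselization. The ramification type is insensitive to replacing $A$ by $A^\mathrm{csh}$. Indeed, $B\otimes_AA^\mathrm{csh}\cong B^\mathrm{csh}$ is the normalization of $A^\mathrm{csh}$ in $K^\mathrm{csh}[X]/(X^p-X-f)$, by the facts recalled in the preliminaries. Moreover, $A\hookrightarrow B$ is étale if and only if $A^\mathrm{csh}\hookrightarrow B\otimes_AA^\mathrm{csh}$ is étale, by faithfully flat descent. So it suffices to show that the normalization of $A^\mathrm{csh}$ in $K^\mathrm{csh}[X]/(X^p-X-f)$ is étale, and in particular not a ramified DVR.

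Next, we change the Artin--Schreier parameter. Over $K^\mathrm{csh}$, the substitution $X\mapsto Y+x$ gives an isomorphism
$$K^\mathrm{csh}[X]/(X^p-X-f)\cong K^\mathrm{csh}[Y]/(Y^p-Y-g),$$
as in the field case recalled at the start of this section. This holds even if $g\in\wp(K^\mathrm{csh})$, in which case both algebras are split. Since $g\in A^\mathrm{csh}$, we can apply the argument from the proof of \autoref{lemma:AS_parameter_not_regular}. The residue field of $A^\mathrm{csh}$ is separably closed, so $Y^p-Y-\bar g$ has a root in it, and hence splits completely, because its roots differ by elements of $\bF_p$. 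By Hensel's lemma, $Y^p-Y-g$ then splits into distinct linear factors over $A^\mathrm{csh}$. Consequently the normalization of $A^\mathrm{csh}$ in this algebra is $(A^\mathrm{csh})^{\oplus p}$, which is étale.

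Finally, this contradicts the ramified case $(2)$ of the dichotomy: there $B$ is local with residue extension such that $B\otimes_AA^\mathrm{csh}$ is a local ring (a ramified DVR), and a local ring is not $(A^\mathrm{csh})^{\oplus p}$. Hence $\lambda(f)<0$. The only delicate point is the compatibility of normalization with base change to $A^\mathrm{csh}$, and this is supplied by the preliminaries (excellence). The rest is the computation already used in \autoref{lemma:AS_parameter_not_regular}.
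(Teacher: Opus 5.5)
Your proof is correct and is essentially the paper's argument. The paper reduces, via \autoref{lemma:AS_parameter_not_regular} applied over $A^\mathrm{csh}$, to showing $f\notin\wp(K^\mathrm{csh})$, which it deduces from the locality of $B\otimes_AA^\mathrm{csh}$ (the normalization of $A^\mathrm{csh}$ in $(K^\mathrm{csh})_{\wp,f}$). You instead inline the Hensel-splitting argument for $g=f+\wp(x)\in A^\mathrm{csh}$ and contradict that same locality, which treats the cases $\lambda(f)=\infty$ and $0\leq\lambda(f)<\infty$ at once.
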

\begin{proof}
Thanks to \autoref{lemma:AS_parameter_not_regular},
it suffices to show that $f\notin \wp(K^\mathrm{csh})$. Since $B$ is local and finite over $A$, by \cite[18.8.10]{EGA_IV.4} we see that $A^\mathrm{sh}\otimes_AB$ is local and regular. It is the normalization of $A^\mathrm{sh}$ in $(K^\mathrm{sh})_{\wp,f}$, where $K^\mathrm{sh}=\Frac(A^\mathrm{sh})$. So the completion of $A^\mathrm{sh}\otimes_AB$ is local, and by \cite[7.8.3.(vii)]{EGA_IV.3} it is isomorphic to the normalization of $A^\mathrm{csh}$ $(K^\mathrm{csh})_{\wp,f}$. This show that $f\notin \wp(K^\mathrm{csh})$ as desired.
\end{proof}


We claim that $\lambda(f)$ determines the ramification data of $A\hookrightarrow B$ introduced above, i.e.\ wild/ferocious ramification, ramification index and ramification breaks.
It is well-known that
        \begin{equation}\label{eqn:lambda_and_ram_type}
        p\text{ divides }\lambda(f)
        \quad \Longleftrightarrow \quad 
        B\text{ is ferociously ramified over }A,
        \end{equation}
see \cite[\S 2.1]{Xiao_Zhukov_Ramification}. So we see already that $\lambda(f)$ determines the ramification type and the ramification index $e$. Let us show that it also determines the ramification break $r$.

\begin{proposition}\label{prop:ramif_break_wild_case}
Assume that $B$ is wildly ramified over $A$. Then $r=-\lambda(f)+1$.
\end{proposition}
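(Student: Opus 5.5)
We first reduce to a model situation. Since $e$, $r$ and the ramification type may all be computed after base change to $A^\mathrm{csh}$, and $\lambda(f)$ is defined in $K^\mathrm{csh}$, we may assume that $A=A^\mathrm{csh}\cong k_A^\mathrm{sep}\llbracket\pi\rrbracket$ is complete with separably closed residue field. Replacing $f$ by $f+\wp(x)$ for a suitable $x\in K^\mathrm{csh}$ does not change the extension, by the isomorphism $X\mapsto Y+h$. Hence we may assume that $\upsilon(f)=\lambda(f)=:-m$, where $m>0$ by \autoref{lemma:lambda_well_defined}. Since $B$ is wildly ramified, \eqref{eqn:lambda_and_ram_type} gives $p\nmid m$, and $e=p$.

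Let $x\in B\otimes K$ be the class of $X$, so that $x^p-x=f$. Then $p\,\upsilon'(x)=\upsilon'(f)=e\upsilon(f)=-pm$, and therefore $\upsilon'(x)=-m$. Because $\gcd(m,p)=1$, we can choose integers $a,b$ with $am+bp=1$, possibly after adjusting signs so that the exponents are convenient. Then
\[
\varpi=x^{-a}\pi^{b}\qquad\text{or a similar monomial}
\]
has $\upsilon'(\varpi)=-a(-m)\cdot(-1)+bp$. More simply, we pick $a,b$ with $\upsilon'(x^a\pi^b)=am\cdot(-1)\cdot(-1)\dots=1$, that is, a monomial $\varpi=x^{\alpha}\pi^{\beta}$ with $-\alpha m+\beta p=1$. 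This $\varpi$ is a uniformizer of $B$.

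Next we compute the action of the generator $\sigma\colon x\mapsto x+1$ on $\varpi$. We have
\[
\sigma(\varpi)/\varpi=(1+x^{-1})^{\alpha},
\]
so $\sigma(\varpi)-\varpi=\varpi\big((1+x^{-1})^\alpha-1\big)$, which has valuation $1+\upsilon'(\alpha x^{-1})=1+m$ because $p\nmid\alpha$. The integer $\alpha$ is prime to $p$: from $-\alpha m+\beta p=1$ it follows that $\alpha m\equiv -1 \pmod p$.

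Since the residue field is unchanged, $B=A[\varpi]$: indeed $B$ is generated over $A$ by a uniformizer in the totally ramified case, because $k_B=k_A$ and $B$ is complete. Thus $\sigma$ acts trivially on $B/\fm_B^s$ if and only if $\sigma(\varpi)-\varpi\in\fm_B^s$. That is, $s\le \upsilon'(\sigma\varpi-\varpi)=m+1$. Hence $r=m+1=-\lambda(f)+1$.

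The main points requiring care are the following. First, the reduction to the case $\upsilon(f)=\lambda(f)$ requires the maximum in \autoref{def:AS_lambda} to be attained; this is guaranteed by \autoref{algo:compute_lambda}. Second, one must justify $B=A[\varpi]$, namely that the monogenic generation holds for complete DVRs with trivial residue extension. Third, the sign bookkeeping for $\alpha,\beta$ must be done correctly.
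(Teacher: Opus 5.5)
Your proposal is correct and is essentially the paper's argument: you reduce to $\upsilon(f)=\lambda(f)=-m$ with $p\nmid m$, take the uniformizer $\varpi=x^{\alpha}\pi^{\beta}$ with $-\alpha m+\beta p=1$, and use that $B=A[\varpi]$, so that $r=\upsilon'(\sigma(\varpi)-\varpi)$. You then read off $r=1+m$ from the leading term $\alpha x^{-1}$, using $p\nmid\alpha$. The paper instead takes $\alpha,\beta\geq 1$ and expands $(x+1)^{\alpha}\pi^{\beta}-x^{\alpha}\pi^{\beta}$ with the binomial theorem, which is the same computation. Your garbled middle paragraph on signs should simply be replaced by the clean condition $-\alpha m+\beta p=1$, which you do state correctly in the end.
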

\begin{proof}
We may assume that $A$ is strictly henselian and that $\upsilon(f)=\lambda(f)=-s$.
Let $\pi\in A$ be a uniformizer and let $x\in \Frac(B)=K[X]/(X^p-X-f)$ be an element satisfying $x^p-x=f$; their valuation on $B$ is
$\upsilon'(\pi)=p$ and $\upsilon'(x)=-s$. Let $a,b\geq 1$ be positive integers satisfying $-as+bp=1$. Then
$\upsilon'(x^a\pi^b)=1$ so $x^a\pi^b$ is a uniformizer of $B$. A generator of the $\bZ/p$-action on $B$ sends $x^a\pi^b$ to $(x+1)^a\pi^b$; moreover $B$ is generated as $A$-module by $\{(x^a\pi^b)^j\mid 0\leq j\leq p-1\}$
\cite[III, \S 6, Lemme 3 on p.66]{Serre_Corps_locaux}. So $r$ is the biggest integer satisfying
        $$(x+1)^a\pi^b-x^a\pi^b=
        \sum_{i=1}^a\binom{a}{i}x^{a-i}\pi^b
        \in \fm_B^r.$$
We have $\upsilon'(x^{a-i}\pi^b)=-s(a-i)+pb$. Taking in account that $a$ is positive and prime to $p$, so that the coefficient of the term $i=1$ is non-zero, we obtain that 
$r=-s(a-1)+pb=1+s$ as claimed.
\end{proof}

\begin{proposition}\label{prop:ramif_break_feroce_case}
Assume that $B$ is ferociously ramified over $A$. Then $r=-\lambda(f)/p$.
\end{proposition}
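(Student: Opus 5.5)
We follow the strategy of \autoref{prop:ramif_break_wild_case}: we reduce to $A$ strictly henselian (and even complete, $A=A^\mathrm{csh}\cong k_A\llbracket\pi\rrbracket$ with $k_A$ separably closed), then replace $f$ by $f+\wp(x)$ so that $\upsilon(f)=\lambda(f)=-pm$ with $m\geq 1$, using \autoref{lemma:lambda_well_defined} and \eqref{eqn:lambda_and_ram_type}. By \autoref{algo:compute_lambda}, since the algorithm has stopped, the leading coefficient $f_{-pm}\in k_A$ is not a $p$-th power. The claim to prove is $r=m$.

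Let $x$ satisfy $x^p-x=f$. Since the ramification is ferocious, $e=1$ and $\pi$ remains a uniformizer of $B$, with $\upsilon'(x)=-m$. The natural candidate generator is $y=x\pi^m$, a unit of $B$. We have $y^p=\pi^{pm}f+\pi^{pm}x$, so $\bar y^p=\overline{f_{-pm}}\notin k_A^p$. Hence $\bar y\notin k_A$, and $k_B=k_A(\bar y)$ has degree $p$. It follows that $B=A[y]$: the elements $1,y,\dots,y^{p-1}$ reduce to a $k_A$-basis of $k_B=B/\pi B$, so by Nakayama they generate $B$ as an $A$-module. (Alternatively, I would cite \cite[III, \S 6]{Serre_Corps_locaux} for monogenicity in the case of an inseparable residue extension.)

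Since $B=A[y]$ and $\bZ/p$ fixes $A$, the action is trivial on $B/\fm_B^s$ if and only if $\sigma(y)-y\in\fm_B^s$, where $\sigma$ is the generator $x\mapsto x+1$. We compute $\sigma(y)-y=(x+1)\pi^m-x\pi^m=\pi^m$, which has $\upsilon'$-valuation exactly $m$. Hence the action is trivial modulo $\fm_B^m$ but not modulo $\fm_B^{m+1}$, so $r=m=-\lambda(f)/p$ by \eqref{def:ramification_break}.

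The main obstacle is justifying the normalization $\upsilon(f)=\lambda(f)$ with a non-$p$-th-power leading coefficient after passing to $A^\mathrm{csh}$. This requires checking that the replacement $f\mapsto f+\wp(x)$ with $x\in K^\mathrm{csh}$ is legitimate. That is fine, since $B\otimes_AA^\mathrm{csh}$ is the normalization of $A^\mathrm{csh}$ in $(K^\mathrm{csh})_{\wp,f}$ (as in the proof of \autoref{lemma:lambda_well_defined}) and $r$ is unchanged by this base change. It also requires checking that the algorithm's stopping condition gives exactly the leading-coefficient property used above.
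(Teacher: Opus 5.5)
Your proposal is correct and follows essentially the paper's argument: normalize so that $\upsilon(f)=\lambda(f)=-pm$ with leading coefficient not a $p$-th power, take the unit $y=\pi^m x$ (the paper's substitution $Y=\pi^n X$), whose residue generates $k_B$ so that $B=A[y]$, and read off $r=m$ from $\sigma(y)-y=\pi^m$. The normalization step you flag as the main obstacle is settled in the paper by a one-line computation: if the leading unit had residue $v^p$, then $f-\wp(v\pi^{-m})$ would have strictly larger valuation than $f$, contradicting the maximality defining $\lambda(f)$.
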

\begin{proof}
We may assume that $A$ is strictly henselian and that $\lambda(f)=\upsilon(f)$. Choose an uniformizer $\pi$ of $A$, and let us write $f=u\pi^{-pn}$ where $n\geq 1$ is an integer and $u\in A^\times$ is a unit. We claim that the residue $\bar{u}\in k_A$ does not belong to $k_A^p$. For the sake of contradiction, assume $\bar{u}\in k_A^p$; then we can write 
$u=v^p+x\pi $ for some $x\in A$ and unit $v\in A^\times$. Hence
        $$f-\wp(v\pi^{-n})=
        v^p\pi^{-pn}+x \pi^{-pn+1}
        -v^p\pi^{-pn}+v\pi^{-n}
       =x\pi^{-pn+1}+v \pi^{-n}$$
which shows that $\upsilon(f)<\upsilon(f+\wp(v\pi^{-n}))$, contradicting our choice of $f$. So $\bar{u}\notin k_A^p$.

Setting $Y=\pi^nX$ yields an isomorphism of $K$-algebras
        \begin{equation}\label{eqn:change_presentation_field}
        K_{\wp,f}=K[X]/(X^p-X-f)\cong 
        K[Y]/(Y^p-\pi^{n(p-1)}Y-u),
        \end{equation}
and the Galois $\bZ/p$-action on the field on the right-hand side is generated by $Y\mapsto Y+\pi^n$.
Hence $A'=A[Y]/(Y^p-\pi^{n(p-1)}Y-u)$ is an intermediate extension of $A\hookrightarrow B$. In particular there is an element $y\in B$ satisfying $y^p-\pi^{n(p-1)}y-u=0$. In $k_B$ we have $\bar{y}^p-\bar{u}=0$; as $\bar{u}\notin k_A^p$ we obtain that $\bar{y}\notin k_A$. Therefore $B$ is generated as $A$-module by $\{y^j\mid 0\leq j\leq p-1\}$ \cite[III, \S 6, Lemme 3 on p.66]{Serre_Corps_locaux}. As a generator of the $\bZ/p$-action sends $y$ to $y+\pi^n$, and $\upsilon'(\pi)=1$, we deduce that $r=n=-\lambda(f)/p$. 
\end{proof} 

\begin{remark}
It follows easily from the two previous propositions that:
The positive integers that can occur as ramification breaks of wildly ramified actions are precisely those that do not belong to $1+p\bZ$, while every positive integer can occur as the ramification break of a ferouciously ramified action. In particular, the ramification break alone does not always determine the ramification type.
\end{remark}

\subsubsection{Effective actions and torsor structures}\label{section:effective_actions}
To conclude this section, we consider the extension $A\hookrightarrow B$ from the perspective of group scheme actions (\footnote{These results will not be used in the rest of the article.}).

The Galois $\bZ/p$-action on $\Frac(B)$ extends to an $A$-linear linear $\bZ/p$-action on $B$. We have seen that $B$ is \'{e}tale over $A$ if and only this action is free, which is equivalent to $B$ being a $\bZ/p$-torsor over $A$. So, for the rest of this sub-section, \emph{we assume that the extension $A\hookrightarrow B$ is ramified.}

Let $X=\Spec(B)$ and $Y=\Spec(A)$.
The Galois action yields a morphism of group functors
        \begin{equation}\label{eqn:generator_effective_model}
        (\bZ/p)_{Y}\to \underline{\Aut}_{X/Y}.
        \end{equation}
Since $X\to Y$ is finite and flat, $\underline{\Aut}_{X/Y}$ is representable by a disjoint union of affine finite type group schemes over $Y$---\cite[Part 2, \S 5.6.2]{FGA_explained} and \cite[Lemma 4.1]{Lorenzini-Schroeer}. We denote the schematic image of \autoref{eqn:generator_effective_model} by $\mathscr{G}$.

\begin{proposition}
$\mathscr{G}$ is a flat finite commutative $Y$-group scheme of length $p$.
\end{proposition}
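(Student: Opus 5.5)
We have to show that $\mathscr{G}$ is a commutative, finite and flat $Y$-group scheme of length $p$. The plan is to compare $\mathscr{G}$ with its generic fiber, using that $Y$ is the spectrum of a DVR. In that setting flatness of a closed subscheme amounts to having no $\pi$-torsion, i.e.\ to being the schematic closure of its generic fiber.

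First, the generic fiber. Over $K=\Frac(A)$ the extension $K\hookrightarrow \Frac(B)$ is Galois with group $\bZ/p$. Hence $(\bZ/p)_K\to \underline{\Aut}_{X_K/K}$ is an isomorphism onto a closed subgroup; in fact $\underline{\Aut}_{X_K/K}$ is the constant group $\bZ/p$, since $X_K\to \Spec K$ is a $\bZ/p$-torsor. So the generic fiber of the image is $(\bZ/p)_K$, which is finite étale of length $p$.

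Second, identify $\mathscr{G}$ with the schematic closure of $(\bZ/p)_K$ inside $\underline{\Aut}_{X/Y}$. The source $(\bZ/p)_Y$ is flat over $Y$, so it is the schematic closure of its generic fiber. Formation of schematic image commutes with this: the scheme-theoretic image of a reduced flat $Y$-scheme is the closure of the image of its generic fiber. To get quasi-compactness for free, work inside the finite-type component containing the image, since $(\bZ/p)_Y$ is a finite union of copies of $Y$. Concretely, $\mathscr{G}$ is the union of the images of the $p$ sections $\sigma^i\colon Y\to \underline{\Aut}_{X/Y}$. Each such image is a closed subscheme isomorphic to $Y$, because a section of a separated morphism is a closed immersion; separatedness holds since each component is affine. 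The union of finitely many closed subschemes, each flat (isomorphic to $Y$), is given by the intersection of their ideals. It is therefore a closed subscheme of a finite union of affine pieces whose coordinate ring injects into $\prod_i A$, so it is torsion-free, hence flat over the DVR $A$.

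Third, finiteness and length. The coordinate ring $R$ of $\mathscr{G}$ is a subring of $A^{\oplus p}$, so it is finite over $A$. It is torsion-free, hence free, and its rank equals the generic length, which is $p$.

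Fourth, the group structure and commutativity. Schematic closure of a subgroup is a subgroup when products commute with closure, which holds here because $\mathscr{G}\times_Y\mathscr{G}$ is flat over $Y$, hence it is the closure of its generic fiber. So multiplication and inversion of $\underline{\Aut}_{X/Y}$ restrict to $\mathscr{G}$, because they do so generically. Commutativity is the closed condition that $m$ and $m\circ\mathrm{swap}$ agree; it holds on the dense generic fiber, hence on all of $\mathscr{G}$ by flatness (reducedness of $\mathscr{G}$ is not needed, only torsion-freeness of $R\otimes_A R$).

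The main obstacle I expect is the careful bookkeeping of the schematic image when the target $\underline{\Aut}_{X/Y}$ is only a disjoint union of affine finite-type schemes. The fix is to note that all $p$ sections land in finitely many affine components, and to compute the image there as $\Spec$ of the image of the coordinate rings in $A^{\oplus p}$.
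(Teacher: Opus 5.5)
Your proof is correct, but it takes a more self-contained route than the paper's.

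\textbf{How the two arguments differ.} The paper does not prove flatness and finiteness of $\mathscr{G}$ itself. It cites the theory of effective models (Raynaud, Romagny, Abramovich, Lorenzini--Schr\"oer) for these. It then identifies the generic fiber as $(\bZ/p)_K$, since a finite union of $K$-points of a finite type $K$-scheme is closed. It deduces length $p$ from flatness, and gets commutativity from Tate--Oort's theorem that group schemes of prime order are commutative.

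You instead compute the schematic image by hand. Since $(\bZ/p)_Y$ is a disjoint union of $p$ sections and $Y$ is the spectrum of a DVR, $\mathscr{G}=\Spec R$, where $R$ is the image of the coordinate rings of the finitely many relevant affine pieces in $A^{\oplus p}$. This gives torsion-freeness (hence flatness), finiteness, and freeness of rank $p$ in one stroke. The subgroup property and commutativity then follow from schematic density of the generic fibers of the flat schemes $\mathscr{G}$ and $\mathscr{G}\times_Y\mathscr{G}$.

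\textbf{What each route buys.} Yours is elementary and avoids both Raynaud's theory and Tate--Oort. Your commutativity argument works for the closure of any commutative constant group, not only one of prime order. The paper's route is shorter and places $\mathscr{G}$ directly in the effective-model and Tate--Oort framework that the next proposition relies on.

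\textbf{One incorrect side remark.} $\underline{\Aut}_{X_K/K}$ is not the constant group $\bZ/p$ when $p\geq 3$. Base-changing to $L=\Frac(B)$ gives $L\otimes_K L\cong L^{\oplus p}$, whose $L$-algebra automorphisms form $S_p$. So $\underline{\Aut}_{X_K/K}$ is an \'etale form of $(S_p)_K$ of degree $p!$; only its group of $K$-points is $\bZ/p$. You are conflating automorphisms of the $\bZ/p$-torsor with automorphisms of the scheme $X_K$.

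The remark is not used anywhere. What you actually need is that the $p$ Galois elements give $p$ pairwise distinct $K$-points of the separated finite type $K$-scheme $\underline{\Aut}_{X_K/K}$. Then $(\bZ/p)_K\to\underline{\Aut}_{X_K/K}$ is a closed immersion, and the generic rank of $R$ is $p$. This is exactly the paper's justification, so drop the ``in fact'' clause and state this instead.
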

\begin{proof}
The fact that $G(\wp,f)$ is a flat finite $C$-group scheme is a well-known result, essentially going back to Raynaud: see \cite[\S 4.3]{Romagny_Effective_models}, \cite[\S 2.1]{Abramovich_Raynaud_grp_scheme} or \cite[\S 4]{Lorenzini-Schroeer}. We have  $\mathscr{G}\times_Y \Spec(K)=(\bZ/p)_{\Spec(K)}$ because a finite union of $K$-points on a finite type $K$-scheme is a closed sub-scheme. 
By flatness we deduce that $\mathscr{G}$ has length $p$ over $C$. Commutativity now follows from \cite[Theorem 1 on p.5]{Tate-Oort}.
\end{proof}

Following \cite[Definition 4.3.3]{Romagny_Effective_models}, the $Y$-group scheme $\mathscr{G}$ is called the \emph{effective model} of the action of $\bZ/p$ on $X$. A remarkable property of this model is the following: since the structural morphism $\mathscr{G}\to \underline{\Aut}_{X/Y}$ is a closed immersion, its kernel sheaf is trivial.

Finite flat group schemes of length $p$ are classified by Tate and Oort in \cite{Tate-Oort}, to which the reader is referred for the definition of $G^\mathscr{L}_{a,b}$ (see \emph{op.cit.}, p.14). Their theory applies in particular to $\mathscr{G}$. It is possible to find the isomorphism class of $\mathscr{G}$ without knowing an explicit presentation of $B$ as $A$-algebra: the following elegant argument is essentially due to Lorenzini and Schröer. 

\begin{proposition}[cf.\ {\cite[Proposition 5.7]{Lorenzini-Schroeer}}]\label{prop:effective_action}
Let $\pi$ be a uniformizer of $A$. Then
        $$\mathscr{G}\cong G^{\sO_Y}_{\pi^{l(p-1)},0}
        \quad \text{with} \quad 
        l=\begin{cases}
        \lfloor (-\lambda(f)+1)/p\rfloor & \text{if the ramification is wild,}\\
        -\lambda(f)/p & \text{if the ramification is ferocious.}
        \end{cases}$$
\end{proposition}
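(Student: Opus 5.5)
The plan is to first pin down the abstract shape of $\mathscr{G}$ using the Tate--Oort classification, which reduces everything to a single integer exponent. That integer will then be identified with the largest $l$ such that the generator $\sigma$ of $\bZ/p$ acts trivially on $B/\pi^lB$. Finally, this $l$ is read off from the ramification break $r$ computed in \autoref{prop:ramif_break_wild_case} and \autoref{prop:ramif_break_feroce_case}.

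\emph{Step 1: $\mathscr{G}\cong G^{\sO_Y}_{\pi^{l(p-1)},0}$ for some $l\geq 0$.} By the previous proposition, $\mathscr{G}$ is finite flat commutative of length $p$ over $Y=\Spec(A)$. Tate--Oort therefore gives $\mathscr{G}\cong G^{\sO_Y}_{a,b}$ with $a\otimes b=w_p$. Since $A$ has characteristic $p$, $w_p=0$, so $ab=0$ and one of $a,b$ vanishes because $A$ is a domain. The generic fiber is the constant group $(\bZ/p)_K=\Spec K[x]/(x^p-x)$ with additive comultiplication. This is of the form $G_{1,0}$, so $b\neq 0$ is impossible and $b=0$. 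Write $a=u\pi^m$ with $u\in A^\times$. Recall that $G_{a,0}\cong G_{a',0}$ iff $a'=c^{p-1}a$ for a unit $c$, so we may take $u=1$ after passing to the strict henselization, which is harmless for computing the isomorphism class via the discrete invariant $m$. The generic fiber $K[x]/(x^p-\pi^mx)$ is constant only if $\pi^m$ is a $(p-1)$-th power in $K$, hence $m=l(p-1)$. Concretely, $\mathscr{G}$ is then the schematic closure of $\bZ/p$ inside $\mathbb{G}_{a,Y}$ under the embedding $i\mapsto i\pi^l$.

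\emph{Step 2: characterize $l$.} An action of $G_{\pi^{l(p-1)},0}=\Spec A[x]/(x^p-\pi^{l(p-1)}x)$ on $X$ extending the Galois action is a coaction
$$\rho\colon B\to B[x]/(x^p-\pi^{l(p-1)}x), \qquad \rho(b)=\sum_{j<p}D_j(b)x^j.$$
Specializing at the $K$-points $x=i\pi^l$ must recover $\sigma^i(b)=\sum_j D_j(b)(i\pi^l)^j$. Inverting this Vandermonde system over $\bF_p$, I expect $D_j(b)\in B$ for all $b$ to be equivalent to $(\sigma-1)(B)\subseteq \pi^lB$, using that the higher $D_j$ come from iterates of $\sigma-1$ divided by powers of $\pi^l$. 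Since $\mathscr{G}\to\underline{\Aut}_{X/Y}$ is a closed immersion and $\mathscr{G}$ is the schematic image, $\mathscr{G}$ is the \emph{largest} such model through which the action factors. Hence $l$ is the maximal integer with $(\sigma-1)B\subseteq \pi^lB$.

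\emph{Step 3: compare with $r$.} We have $\pi^lB=\fm_B^{el}$ by \autoref{eqn:ramification_AS_ext}, while by definition $\sigma$ acts trivially on $B/\fm_B^s$ iff $s\leq r$. Thus $l$ is maximal with $el\leq r$, that is, $l=\lfloor r/e\rfloor$. In the wild case $e=p$ and $r=-\lambda(f)+1$, which gives $l=\lfloor(-\lambda(f)+1)/p\rfloor$. In the ferocious case $e=1$ and $r=-\lambda(f)/p$, which gives $l=-\lambda(f)/p$.

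The main obstacle is Step 2. One must show rigorously that integrality of the whole coaction, not only of $D_1$, follows from $(\sigma-1)B\subseteq\pi^lB$, and conversely that the schematic image is exactly this maximal model. I would first handle this by using the explicit $A$-basis $\{t^j\}$ of $B$ (a uniformizer in the wild case, $y$ in the ferocious case) from the proofs of \autoref{prop:ramif_break_wild_case} and \autoref{prop:ramif_break_feroce_case}. With that basis, the coefficients of $\sigma^i(t^j)$ can be checked by hand.
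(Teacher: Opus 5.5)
Your outline is correct. Steps 1 and 3 are essentially the paper's: it also gets the shape $G^{\sO_Y}_{\pi^{l(p-1)},0}$ from Tate--Oort (citing Lorenzini--Schröer), and it also concludes with $l=\lfloor r/\upsilon'(\pi)\rfloor$. The real difference is Step 2, where the paper never looks at the coaction. It takes the $Y$-point $\xi$ of $\mathscr{G}\cong\Spec A[x]/(x^p-\pi^{l(p-1)}x)$ cut out by $x-\pi^l$. Modulo $\pi^n$, $\xi$ coincides with the neutral section if and only if $n\leq l$. Moreover $j\colon\mathscr{G}\to\underline{\Aut}_{X/Y}$ is a closed immersion, hence a monomorphism, and $j(\xi)$ is the action of a generator $\sigma$. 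Together these give at once that $\sigma$ acts trivially on $B/\pi^nB$ if and only if $n\leq l$. This yields both inequalities at once, with no comparison between models.

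Your route instead characterizes $\mathscr{G}$ as the maximal $G_{l'}$ to which the action extends, and tests integrality of the coaction. This gives an explicit coaction, but it costs two verifications that you only sketch.

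\emph{Integrality.} This needs no basis of $B$. With $\sigma$ the generator corresponding to the point $x=\pi^l$, one has
\[
\rho(b)=\sum_{k=0}^{p-1}\frac{(\sigma-1)^k(b)}{\pi^{lk}}\cdot\frac{x(x-\pi^l)\cdots(x-(k-1)\pi^l)}{k!},
\]
since evaluating at $x=i\pi^l$ gives $\sum_k\binom{i}{k}(\sigma-1)^k(b)=\sigma^i(b)$. The polynomials on the right form an $A$-basis of $A[x]/(x^p-\pi^{l(p-1)}x)$, because they are triangular with unit leading coefficients. So $\rho$ is integral if and only if $(\sigma-1)^kB\subseteq\pi^{lk}B$ for all $k<p$, and by $A$-linearity this follows from the case $k=1$.

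\emph{Maximality.} You must argue as follows. If the action extends to $G_{l'}$, then $G_{l'}\to\underline{\Aut}_{X/Y}$ factors through $\mathscr{G}$, because the generic fibre is schematically dense in the flat scheme $G_{l'}$. A homomorphism $G_{l'}\to G_l$ that is the identity generically must pull back the coordinate of $G_l$ to $\pi^{l-l'}$ times the coordinate of $G_{l'}$, which forces $l'\leq l$.

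\emph{Step 1.} Do not pass to the strict henselization: an isomorphism over $A^{\mathrm{sh}}$ is not an isomorphism over $A$. Constancy of the generic fibre already gives $u\pi^m=c^{p-1}$ with $c=w\pi^l$ and $w\in A^\times$. Hence $G_{u\pi^m,0}\cong G_{\pi^{l(p-1)},0}$ over $A$ itself.
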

\begin{proof}
We have seen that the generic fiber of $\mathscr{G}\to Y$ is the constant group associated to $\bZ/p$. So Tate--Oort theory implies that $\mathscr{G}\cong G^{\sO_Y}_{\pi^{l(p-1)},0}$ for some integer $l\geq 0$, see \cite[Proposition 5.3]{Lorenzini-Schroeer}. Hence
        $$\mathscr{G}\cong \Spec A[x]/(x^p-\pi^{l(p-1)}x)$$
where the ideal defining the neutral section $Y\to \mathscr{G}$ is $(x)$. The ideal $(x-\pi^l)$ defines an $Y$-point $\xi$ of $\mathscr{G}$. The structural morphism $\phi\colon (\bZ/p)_Y\to \mathscr{G}$ is an isomorphism over $K$: so $\xi_K$ extends uniquely to an $Y$-point $\sigma$ of the constant group $(\bZ/p)_Y$ satisfying $\phi(\sigma)=\xi$. 

For any integer $n\geq 1$, we write $\xi_n=\xi\otimes_A A/(\pi^n)$ and $\sigma_n=\sigma\otimes_A A/(\pi^n)$: these are $A/(\pi^n)$-linear automorphisms of $B/(\pi^nB)$. 
Since the structual $j\colon \mathscr{G}\to \underline{\mathrm{Aut}}_{X/Y}$ is a closed embedding, 
we see that $\xi_n=\id_{B/(\pi^nB)}$ if and only if $n\leq l$. On the other hand, by definition of the ramification break $r$, the element $\sigma_n$ is mapped to the identity by $j\circ \phi$ if and only if $\pi^nB\supseteq \mathfrak{m}_B^r$, which is equivalent to $n\upsilon'(\pi)\leq r$. Therefore we obtain
        $$n\upsilon'(\pi)\leq r
        \  \Longleftrightarrow \ 
        n\leq l,$$
which implies $l= \lfloor r/\upsilon'(\pi)\rfloor$. We obtain the desired expression thanks to \autoref{prop:ramif_break_wild_case} and \autoref{prop:ramif_break_feroce_case}.
\end{proof}

The following is a generalization of \cite[Theorem 5.1]{Lorenzini-Schroeer}:

\begin{proposition}\label{prop:torsor_AS_case}
The $\mathscr{G}$-action on $X$ makes $X\to Y$ into a $\mathscr{G}$-torsor if and only if:
    \begin{itemize}
        \item the ramification is wild and $p$ divides $-\lambda(f)+1$; or,
        \item the ramification is ferocious.
    \end{itemize}
\end{proposition}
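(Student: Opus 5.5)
Since $X\to Y$ is finite flat of degree $p$ and $\mathscr{G}$ is finite flat of length $p$ over $Y$, the action morphism
$$\Phi\colon \mathscr{G}\times_Y X\longrightarrow X\times_Y X$$
is a morphism of finite flat $X$-schemes of the same degree $p$ (via the second projection). So $X\to Y$ is a $\mathscr{G}$-torsor if and only if $\Phi$ is an isomorphism, i.e.\ if and only if the corresponding map of free $B$-modules of rank $p$
$$B\otimes_A B\longrightarrow \sO(\mathscr{G})\otimes_A B$$
is an isomorphism. Both sides are free of rank $p$, so this reduces to checking that a determinant is a unit. Both the torsor condition and the numerical data $\lambda(f)$, $e$, $r$, $l$ are invariant under passing to $A^\mathrm{sh}$ (and to $A^\mathrm{csh}$), so I will assume $A$ strictly henselian.

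The generator $\sigma$ acts on $B$ by an automorphism, and \autoref{prop:effective_action} gives $\mathscr{G}\cong\Spec A[x]/(x^p-\pi^{l(p-1)}x)$. On points, the action is $g\mapsto \sigma^{g/\pi^l}$; Tate--Oort theory lets one write $\sigma=\exp(\pi^l\delta)$ formally for a derivation-like operator $\delta$ on $B$. Since $B$ is monogenic, $B=A[t]$ for a generator $t$ (a uniformizer $x^a\pi^b$ in the wild case, or $y$ in the ferocious case, as in the proofs of \autoref{prop:ramif_break_wild_case} and \autoref{prop:ramif_break_feroce_case}). Then $B\otimes_AB=B[T]/(\text{min.\ poly of }t)$, and the map is determined by $T\mapsto$ the coaction image of $t$. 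Its determinant is essentially a discriminant-type expression. Concretely, I expect the criterion to become: $\Phi$ is an isomorphism if and only if the map on closed fibers is, if and only if the "infinitesimal generator" $(\sigma(t)-t)/\pi^l$ is a unit times something generating $B$ over the fiber ring. This amounts to
$$\upsilon'(\sigma(t)-t)-\upsilon'(t)\cdot[\text{appropriate}] = l\,\upsilon'(\pi).$$

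For the cleaner numerical form, I will compare discriminants. Let $\mathfrak d_{B/A}$ be the discriminant of $B/A$ and $\mathfrak d_{\mathscr{G}}$ that of $\sO(\mathscr{G})$. If $X$ is a $\mathscr{G}$-torsor, then after fppf base change $X\times_YX\cong \mathscr{G}\times_YX$, so $\mathfrak d_{B/A}=\mathfrak d_{\mathscr{G}}$. The converse also holds: $\Phi$ is a morphism between finite flat algebras of equal rank, and it is injective generically, being an isomorphism over $K$ where everything is Galois. Comparing discriminants then shows the cokernel has length zero. Now I compute both sides:
\begin{itemize}
\item The discriminant of $x^p-\pi^{l(p-1)}x$ has valuation $l(p-1)p$.
\item By \eqref{eqn:ramification_AS_ext}, the different of $B/A$ has $\upsilon'$-valuation $(p-1)r$, so $\upsilon(\mathfrak d_{B/A})=f_{B/A}(p-1)r=(p/e)(p-1)r$.
\end{itemize}
The torsor condition therefore becomes $pl=(p/e)\,r$, i.e.\ $l=r/e$. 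In the ferocious case $e=1$ and $l=r$ by \autoref{prop:effective_action}, so the condition always holds. In the wild case $e=p$, $r=-\lambda(f)+1$ and $l=\lfloor r/p\rfloor$, so the condition holds exactly when $p\mid r$. This gives the claimed dichotomy.

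The main obstacle is the converse direction: showing that equality of discriminants forces $\Phi$ to be an isomorphism. This requires $\Phi$ to be a map of finite flat $B$-algebras (true since it is $\mathscr{G}$-equivariantly induced from the action) and injective (true by generic isomorphism and torsion-freeness). Then the determinant of $\Phi$ has valuation equal to half the difference of the discriminants. I would verify this last relation carefully using the identity $\mathrm{disc}(M)=\det(\Phi)^2\,\mathrm{disc}(N)$ for an inclusion of lattices with compatible trace forms. The compatibility of the trace forms needs checking, since $\mathscr{G}\times X$ and $X\times X$ carry traces over $X$ that agree generically.
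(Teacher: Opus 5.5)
Your argument is correct, but it takes a genuinely different route from the paper.

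\textbf{The paper's route.} The paper works with explicit equations. It invokes the Lorenzini--Schr\"oer description of $\mathscr{G}$-torsors as $\Spec A[T]/(T^p-\pi^{l(p-1)}T-s)$. In the forward direction it uses the Jacobian criterion to see that $s$ is a differential coordinate, so that $\lambda(f)\in\{-lp,-lp+1\}$. For the converse it writes down regular presentations of $B^{\mathrm{sh}}$, with $f=\pi^{\lambda}$ in the wild case and $f=u\pi^{-lp}$ in the ferocious case, and then descends.

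\textbf{Your route.} You reduce the torsor property to $\det\Phi^\#\in B^\times$ and measure this determinant through discriminants:
\begin{itemize}
\item $\upsilon(\mathfrak d_{\mathscr G})=pl(p-1)$, from the Hopf algebra $A[x]/(x^p-\pi^{l(p-1)}x)$;
\item $\upsilon(\mathfrak d_{B/A})=[k_B:k_A](p-1)r$, from \eqref{eqn:ramification_AS_ext} and $\mathfrak d=N(\mathfrak D)$.
\end{itemize}
This gives the uniform criterion that $X\to Y$ is a torsor if and only if $e\mid r$. Combined with $l=\lfloor r/e\rfloor$ from the proof of \autoref{prop:effective_action}, it settles both ramification types at once.

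\textbf{What each buys.} Your argument needs no strict henselization, no descent, and no ``differential coordinate'' check, which is the step the paper leaves to the reader. As a bonus, the inequality $\upsilon(\mathfrak d_{B/A})\geq\upsilon(\mathfrak d_{\mathscr G})$ independently recovers $l\leq r/e$. The paper's route, in exchange, produces explicit equations for the torsors.

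\textbf{The point you flag is automatic.} $\Phi^\#$ is a homomorphism of finite free $B$-algebras of rank $p$ that becomes an isomorphism after $\otimes_B\Frac(B)$. Traces commute with base change, so $\mathrm{Tr}\circ\Phi^\#=\mathrm{Tr}$. Hence $\mathrm{disc}(B\otimes_AB)=(\det\Phi^\#)^2\,\mathrm{disc}(\sO(\mathscr G)\otimes_AB)$ holds exactly. Both discriminants are nonzero because the generic fibres are \'etale.

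\textbf{Delete your second paragraph.} The formal exponential $\sigma=\exp(\pi^l\delta)$ and the condition involving ``[appropriate]'' are heuristic, not rigorous in characteristic $p$, and never used.
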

\begin{proof}
Let $l\geq 1$ be as in \autoref{prop:effective_action}.
According to \cite[Lemma 4.14 and its proof]{Lorenzini-Schroeer}, $\mathscr{G}$-torsors over $Y$ are isomorphic (as torsors) to
        $$\Spec A[T]/(T^p-\pi^{l(p-1)}T-s)\to \Spec(A)$$
for some choice of uniformizer $\pi$ of A and some element $s\in A$, with $\mathscr{G}$-action induced by $T\mapsto T+\pi^l$. If the total space of the torsor is integral, the induced extension of function fields is $K\hookrightarrow K_{\wp,s\pi^{-lp}}$.

Suppose that the total space of such a torsor is isomorphic to $B$ over $A$. Then $\lambda(f)=\lambda(s\pi^{-lp})$, which we denote by $\lambda$. Moreover, the hypersurface above is formally smooth over $k$, and by the Jacobian criterion we obtain that $s\in A$ is a differential coordinate, i.e.\ the differential $ds\in \Omega^1_{A/k}$ generates a direct summand. The reader will check that this implies
        $$\lambda(s\pi^{-lp})=
        \upsilon(s\pi^{-lp})=
        \begin{cases}
        -lp & \text{if }s\in A^\times,\\
        -lp+1 & \text{otherwise}.
        \end{cases}$$
We also obtain that $B$ is ferociously ramified over $A$ if and only if $s\in A^\times$. In the ferocious case, the equality $\lambda=-lp$ is nothing new in view of \autoref{prop:effective_action}. In the wild case, we get $l=(-\lambda+1)/p$: so $-\lambda+1$ must be divisible by $p$.

We prove the converse. We begin with the wild case: suppose that $-\lambda(f)+1$ is divisible by $p$. Then \autoref{prop:effective_action} implies that 
$lp=-\lambda(f)+1$. After going to the strict henselization we may assume that $f=\pi^{\lambda(f)}$ for some uniformizer $\pi$ of $A^\mathrm{sh}$, so that
        $$B^\mathrm{sh}\text{ is the normalization of }
        A^\mathrm{sh}[T]/(T^p-\pi^{l(p-1)}T-\pi);$$
but it is clear that the right-hand side is already regular, hence equal to $B^\mathrm{sh}$. The $\mathscr{G}\otimes_AA^\mathrm{sh}$-action is induced by $T\mapsto T+\pi^l$. By the first paragraph, we deduce that $B^\mathrm{sh}$ is a $\mathscr{G}\otimes_AA^\mathrm{sh}$-torsor above $A^\mathrm{sh}$: by descent, it follows that $B$ is a $\mathscr{G}$-torsor above $A$.

It remains to consider the ferocious case: then $\lambda(f)=-lp$ according to \autoref{prop:effective_action}. We may assume that $f=u\pi^{-lp}$ for some $u\in A^\times$, and \autoref{algo:compute_lambda} shows that $u$ is a differential coordinate of $A$. This implies that $A[T]/(T^p-\pi^{l(p-1)}T-u)$ is already regular, with $\mathscr{G}$-action induced by $T\mapsto T+\pi^l$. So $B$ is a $\mathscr{G}$-torsor over $A$.
\end{proof}

%% file: Infinitesimal_ramification.tex
\section{Ramification of degree $p$ purely inseparable extensions}\label{section:pur_insep_ramification}
We give the basic theory of degree $p$ purely inseparable extensions of field and DVRs.

\subsection{Purely inseparable field extensions of degree $p$}\label{section:pur_insep_field_ext_deg_p}
Let $K$ be a field extension of $k$. Then the purely inseparable extensions of $K$ of degree $p$ are precisely 
those of the form
        $$K_{F,a}=K[X]/(X^p-a), \quad a\notin K^p.$$
We have the $K$-linear derivation $\partial_{F,a}\in \Der_K(K_{F,a})$, sending $X$ to $1$, which satisfies
$\partial_{F,a}^{\circ p}=0$ and 
$\ker(\partial_{F,a})=K$. 
Two extensions $K_{F,a}$ and $K_{F,b}$ are $K$-isomorphic if and only if we can write $b=\sum_{i=0}^{p-1}u_i^pa^i$ for some $u_0,\dots,u_{p-1}\in K$ and $u_i\neq 0$ for at least one $i>0$.

\subsection{Purely inseparable DVR extensions of degree $p$}
Let $K$, $\upsilon$ and $(A,\fm_A,k_A)$ be as in \autoref{section:AS_ext_DVR}. Fix an element $a\in K$ that is not a $p$-th power, and let $B$ be the normalization of $A$ inside $K_{F,a}$. Then $B$ is a DVR with maximal ideal $\fm_B$, residue field $k_B$ and normalized valuation $\upsilon'$. Since $A$ is a DVR, $B$ is automatically a finite flat $A$-module, of degree $p=[K_{F,a}:K]$.

We define the \emph{ramification index} of $A\hookrightarrow B$ as follows:
        $$\widetilde{e}=\frac{p}{[k_B:k_A]}=\begin{cases}
        p & \text{if }\fm_AB=\fm_B^p, \\
        1 & \text{if }\fm_AB=\fm_B.
        \end{cases}$$
Note that if $\pi$ is a uniformizer of $A$, then $\upsilon'(\pi)=\widetilde{e}$.
We have the following analogue of \cite[III, \S 6, Lemme 3]{Serre_Corps_locaux}:

\begin{lemma}\label{lemma:explicit_p_basis}
Let $\varpi$ be a uniformizer of $B$, and $f\in B$ an element such that $k_B=k_A[\bar{f}]$. Then the elements
$\varpi^if^j$, for $0\leq i <\widetilde{e}$ and $0\leq j <p/\widetilde{e}$, form a basis of $B$ over $A$.
\end{lemma}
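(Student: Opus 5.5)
The plan is to run the standard argument from Serre's \emph{Corps locaux}, which uses only that $B$ is a finite free $A$-module of rank $p=\widetilde{e}\cdot[k_B:k_A]$. The main tool is Nakayama's lemma over the local ring $A$.

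\emph{Step 1: the number of elements is right.} By definition of $\widetilde{e}$ we have $[k_B:k_A]=p/\widetilde{e}$. Since $[k_B:k_A]\in\{1,p\}$ and $k_B=k_A[\bar f]$, the elements $\bar f^j$ for $0\leq j<p/\widetilde{e}$ form a $k_A$-basis of $k_B$. The proposed family therefore has $\widetilde{e}\cdot p/\widetilde{e}=p$ elements. This equals the rank of the free $A$-module $B$, which is free because $A$ is a DVR and $B$ is finite and torsion-free.

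\emph{Step 2: the family generates $B/\fm_AB$ over $k_A$.} We use the relation $\fm_AB=\fm_B^{\widetilde{e}}$, which follows from $\upsilon'(\pi)=\widetilde{e}$. This gives the filtration
$$B/\fm_AB\supset \fm_B/\fm_AB\supset\dots\supset \fm_B^{\widetilde{e}-1}/\fm_AB\supset 0,$$
whose graded pieces are $\fm_B^i/\fm_B^{i+1}$ for $0\leq i<\widetilde{e}$. Each piece is a one-dimensional $k_B$-vector space generated by $\varpi^i$. Hence the images of $\varpi^if^j$, for $0\leq j<p/\widetilde{e}$, span the $i$-th graded piece over $k_A$. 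A descending induction along the filtration then shows that the whole family spans $B/\fm_AB$ over $k_A$. As a consistency check, this quotient has dimension $\widetilde{e}\cdot[k_B:k_A]=p$.

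\emph{Step 3: conclude.} Nakayama's lemma, applied to the finite $A$-module $B$ over the local ring $A$, shows that the family generates $B$ as an $A$-module. A surjection $A^p\to B$ onto a free module of rank $p$ is an isomorphism, by rank considerations after tensoring with $K$ together with the fact that $A^p$ is torsion-free. So the family is a basis.

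The only point needing care is the finiteness of $B$ over $A$, which Nakayama's lemma requires. Here it holds because $A$ is excellent, and the paper already records that $B$ is finite flat of degree $p$. I expect no real obstacle beyond setting up the filtration in Step 2 correctly.
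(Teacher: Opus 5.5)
Your proposal is correct and follows the same route as the paper. Both arguments match the $p$ elements against the rank of the free $A$-module $B$, reduce by Nakayama's lemma to spanning $B/\fm_AB$, and get that spanning from $\fm_AB=\fm_B^{\widetilde{e}}$. Your filtration by the pieces $\fm_B^i/\fm_B^{i+1}$ spells out what the paper calls ``immediate'', and the paper's phrase ``generation modulo $\fm_B$'' should be read as modulo $\fm_AB$, which is what you use.
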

\begin{proof}
Since $B$ is flat of degree $p$ over $A$, and we are considering $p$ elements $\varpi^if^j$, it suffices to show that these elements generate $B$ as $A$-module. By Nakayama's lemma, it is sufficient to prove generation modulo $\fm_B$. As $\fm_AB=\fm_B^{\widetilde{e}}$, this is immediate.
\end{proof}

As in the Artin--Schreier case, to compute the ramification index we are allowed to replace $A$ by its strict henselization, or by the completion thereof. Let us introduce the following invariant:

\begin{definition}\label{def:inf_lambda}
For $g\in K^\mathrm{csh}$, we let
        $$\widetilde{\lambda}_{K,\upsilon}(g)=
        \max\{\upsilon^\mathrm{csh}(g+x^p)\mid x\in K^\mathrm{csh}\}\in \bZ\cup\{\infty\}.$$
We write interchangeably $\widetilde{\lambda}_{K,\upsilon}=
\widetilde{\lambda}_{A}=\widetilde{\lambda}$ if there is no risk of confusion. One sees easily that $\widetilde{\lambda}(g)$ is finite if and only if $g\notin (K^\mathrm{csh})^p$.
\end{definition}

\begin{lemma}
In our situation, $\widetilde{\lambda}_{K,\upsilon}(a)\in \bZ$.
\end{lemma}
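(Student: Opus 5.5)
Since $\widetilde{\lambda}(a)$ is finite exactly when $a\notin (K^\mathrm{csh})^p$, the statement amounts to showing that $a$ does not become a $p$-th power in $K^\mathrm{csh}$. This mirrors \autoref{lemma:lambda_well_defined} in the Artin--Schreier case, and I would follow the same route: use the compatibility of normalization with strict henselization and completion recalled in the Preliminaries.

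Since $K\hookrightarrow K_{F,a}$ is purely inseparable, $\Spec(B)\to\Spec(A)$ is a homeomorphism, so $B$ is local. This is already stated: $B$ is a DVR. By \cite[18.8.10]{EGA_IV.4} and \cite[7.8.3.(vii)]{EGA_IV.3}, as recalled in the Preliminaries, we get $B^\mathrm{csh}\cong B\otimes_AA^\mathrm{csh}$. This ring is a regular local ring, in particular a domain. It is also finite free of rank $p$ over $A^\mathrm{csh}$, because $B$ is finite flat of degree $p$ over $A$.

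Suppose, for contradiction, that $a=x^p$ with $x\in K^\mathrm{csh}$. Then
$$K_{F,a}\otimes_K K^\mathrm{csh}\cong K^\mathrm{csh}[X]/(X^p-x^p)=K^\mathrm{csh}[X]/((X-x)^p),$$
which is non-reduced. On the other hand, $B\otimes_AA^\mathrm{csh}$ is a domain. Since $B$ is flat over $A$ and $K$ is the localization of $A$ at $A\smallsetminus\{0\}$, the ring $B\otimes_AA^\mathrm{csh}$ embeds into its localization
$$B\otimes_A A^\mathrm{csh}\otimes_A K = K_{F,a}\otimes_K (A^\mathrm{csh}\otimes_A K).$$
The ring $A^\mathrm{csh}\otimes_AK$ is $A^\mathrm{csh}[1/\pi]=K^\mathrm{csh}$ for a uniformizer $\pi$ of $A$, because $\fm_A A^\mathrm{csh}$ is the maximal ideal of $A^\mathrm{csh}$. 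Hence the localization $K_{F,a}\otimes_KK^\mathrm{csh}$ is a localization of a domain and must be reduced. This contradicts the non-reducedness above, so $a\notin (K^\mathrm{csh})^p$, and therefore $\widetilde{\lambda}(a)<\infty$. It is not $-\infty$ because $a\neq 0$.

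The main obstacle is the identification $B^\mathrm{csh}\cong B\otimes_AA^\mathrm{csh}$ and its regularity (hence integrality). This is where excellence of $A$ enters: the completion of the excellent ring $B^\mathrm{sh}$ is again regular. I would simply cite these facts as in the proof of \autoref{lemma:lambda_well_defined}.
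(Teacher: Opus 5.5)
Your proof is correct and follows the paper's approach. The paper only says the argument is similar to \autoref{lemma:lambda_well_defined}, namely that $B\otimes_AA^\mathrm{csh}$ is local and regular, so the extension cannot degenerate over $K^\mathrm{csh}$; your version adapts this by noting that the generic fibre $K_{F,a}\otimes_KK^\mathrm{csh}$ is reduced, being a localization of a domain, whereas it would equal $K^\mathrm{csh}[X]/((X-x)^p)$ if $a=x^p$. A minor point: regularity of $B^\mathrm{csh}$ needs no excellence, since the completion of any noetherian regular local ring is regular, and your route also avoids identifying $B^\mathrm{csh}$ with a normalization over $A^\mathrm{csh}$.
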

\begin{proof}
It suffices to show that $a\notin (K^\mathrm{csh})^p$. The proof is similar to the one of \autoref{lemma:lambda_well_defined}.
\end{proof}

\begin{lemma}\label{lemma:wild_iff_lambda_not_div_inf_case}
$\widetilde{\lambda}(a)$ is prime to $p$ if and only if $\widetilde{e}=p$.
\end{lemma}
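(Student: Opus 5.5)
We reduce to the case where $A$ is complete and strictly henselian, i.e.\ $A=A^\mathrm{csh}\cong k_A^\mathrm{sep}\llbracket\pi\rrbracket$. This is allowed because the ramification index $\widetilde{e}$ and $\widetilde{\lambda}(a)$ are both insensitive to this base change. Next we change the presentation of the extension. Since $K_{F,a}=K_{F,a+x^p}$ for any $x\in K$, we may replace $a$ by $a+x^p$ with $\upsilon(a+x^p)=\widetilde{\lambda}(a)$; we must check that the maximum in \autoref{def:inf_lambda} is attained, which holds because the valuations of the $a+x^p$ are bounded above (as $a\notin K^p$). Finally, multiplying $a$ by $\pi^{pm}$ does not change $K_{F,a}$, so we may assume $\widetilde{\lambda}(a)=\upsilon(a)=s$ with $0\le s<p$. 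Note that $\widetilde{\lambda}$ shifts by $pm$, which does not affect divisibility by $p$. Write $a=u\pi^s$ with $u\in A^\times$. Let $x\in B$ be the element with $x^p=a$; then $p\,\upsilon'(x)=\widetilde{e}\,s$.

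\emph{Case $p\nmid s$.} Then $\upsilon'(x)=\widetilde{e}s/p$ is an integer. Since $s$ is prime to $p$, we must have $p\mid\widetilde{e}$, which forces $\widetilde{e}=p$.

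\emph{Case $p\mid s$.} Here $s=0$ and $a=u$ is a unit. We first show that the residue $\bar u$ is not in $k_A^p$. Suppose instead $\bar u=\bar v^p$ with $v\in A$ a lift. Then $\upsilon(u-v^p)>0=\upsilon(u)$, which contradicts the maximality of $\upsilon(a)$ among the $a+x^p$ (taking $x=-v$). So $\bar u\notin k_A^p$. Now $x\in B$ satisfies $\bar x^p=\bar u$ in $k_B$, so $\bar x\notin k_A$. Hence $[k_B:k_A]\geq p$, and since $[k_B:k_A]\le p$, we get equality and $\widetilde{e}=1$.

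Since $\widetilde{e}$ takes only the values $p$ and $1$, the two cases give both implications of the equivalence: $p\nmid\widetilde{\lambda}(a)$ gives $\widetilde{e}=p$, and $p\mid\widetilde{\lambda}(a)$ gives $\widetilde{e}=1$.

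The main obstacle is the reduction step: checking that the maximum defining $\widetilde{\lambda}$ is attained, and that it is compatible with the passage to $A^\mathrm{csh}$. Attainment follows from the following fact: if $\upsilon(a+x^p)$ were unbounded, then $a$ would be a limit of $p$-th powers. But $(K^\mathrm{csh})^p=k_A^{\mathrm{sep},p}((\pi^p))$ is closed in $K^\mathrm{csh}$, so $a$ would itself be a $p$-th power, which it is not.
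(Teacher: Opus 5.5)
Your proof is correct. The direction $p\mid\widetilde{\lambda}(a)\Rightarrow\widetilde{e}=1$ is the paper's argument: reduce to $a$ being a unit $u$, use maximality to get $\bar u\notin k_A^p$, and observe that the $p$-th root of $u$ in $B$ has residue outside $k_A$.

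The other direction is where you differ. The paper chooses a uniformizer $\pi$ with $\pi^s=a$; this uses Hensel's lemma to extract an $s$-th root of the unit part, with $s$ prime to $p$. It then rewrites $K_{F,a}\cong K_{F,\pi^i}$ and computes the normalization explicitly as $k_A\llbracket\pi^{1/p}\rrbracket$, citing an external result on normalizing $x^p-\pi^i$. You instead note that $p\,\upsilon'(x)=\widetilde{e}\,s$ holds in $\bZ$ with $p\nmid s$, which forces $p\mid\widetilde{e}$. Your route is shorter and self-contained: it needs neither the $s$-th root nor the normalization computation.

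The paper's route has one advantage: it yields the explicit presentation $B^{\mathrm{csh}}=k_A^{\mathrm{sep}}\llbracket\pi^{1/p}\rrbracket$, recorded in \autoref{rmk:algebra_presentation_wild_inf_case}. That presentation is what gets invoked later, in \autoref{example:structure_Frob_ext_wild_case} and in the proof of \autoref{prop:wild_becomes_tame}.

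Finally, you show explicitly that the maximum defining $\widetilde{\lambda}$ is attained, using that $(K^{\mathrm{csh}})^p$ is closed. This fills in a step the paper only calls easily seen.
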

\begin{proof}
We may assume that $A$ is strictly henselian and complete, and that $\upsilon(a)=\widetilde{\lambda}(a)$.

Suppose that $\upsilon(a)=s$ is prime to $p$. Then we can choose an uniformizer $\pi$ of $A$ such that $\pi^{s}=a$. Write $s=i+pr$ where $0<i<p$ and $r\in \bZ$. Then $K_{F,a}\cong K_{F,\pi^i}$ and so
        \begin{equation}\label{eqn:algebra_pres_wild_case}
        B\cong \big(
        k_A\llbracket \pi,x\rrbracket/(x^p-\pi^i)
        \big)^\nu = k_A\llbracket \pi^{1/p}\rrbracket
        \end{equation}
where we have used \cite[Proposition 2.2.1]{Posva_Resolution_of_1_foliations} for the last equality. So $\widetilde{e}=p$.

Conversely, suppose that $\widetilde{\lambda}(a)=pn$ with $n\in \bZ$. Then we can write $a=u\pi^{pn}$ for some uniformizer $\pi$ of $A$ and unit $u\in A^\times$. Then $K_{F,a}\cong K_{F,u}$. In particular, $u$ obtains a $p$-th root in $B$, and thus its residue $\bar{u}$ has a $p$-th root in $k_B$. If $k_B=k_A$, then there is $v\in A^\times$ such that $u=v^p+\pi f$ for some $f\in A$. But then
        $$a-(v\pi^n)^p=u\pi^{np}-(u-\pi f)\pi^{np}
        =\pi^{1+np}f$$
and we get $\upsilon(a)<\upsilon(a-(v\pi^n)^p)$, contradiction. So $k_A\neq k_B$, which means $\widetilde{e}=1$.
\end{proof}

\begin{remark}\label{rmk:algebra_presentation_wild_inf_case}
Let us record the consequence of \autoref{eqn:algebra_pres_wild_case}: if $\widetilde{\lambda}(a)$ is prime to $p$, then we can find a uniformizer $\pi$ of $A^\mathrm{csh}$ such that
        $$B^\mathrm{csh}= \big(k_A^\mathrm{sep}\llbracket \pi, x\rrbracket /
        (x^p-\pi^s)\big)^\nu
        = k_A^\mathrm{sep}\llbracket \pi^{1/p}\rrbracket
        \supset 
        k_A^\mathrm{sep}\llbracket \pi\rrbracket
        =A^\mathrm{csh}.$$
\end{remark}

\begin{remark}
Suppose that $a$ and $a’$ induce $K$-isomorphic extensions $K_{F, a}\cong K_{F, a’}$: then $a'=\sum_{i=0}^{p-1}u_i^pa^i$ as in \autoref{section:pur_insep_field_ext_deg_p}, and it is easily seen that $\widetilde{\lambda}(a)\neq \widetilde{\lambda}(a’)$ in general. Nevertheless, whether $\widetilde{\lambda}(a)$ is divisble by $p$ or not depends only on the $K$-isomorphism class of $K_{F,a}$, as shown by \autoref{lemma:wild_iff_lambda_not_div_inf_case} (or by a direct argument).
\end{remark}

The $A$-module structure of $B$ follows an alternative that is similar to the one we saw for Artin--Schreier extensions:

\begin{example}\label{example:structure_Frob_ext_wild_case}
Suppose that $\widetilde{\lambda}(a)$ is prime to $p$. We may assume that $\widetilde{\lambda}(a)=\upsilon(a)=s$.
Let $\pi\in A$ be a uniformizer and let $x\in \Frac(B)=K[X]/(X^p-a)$ be the element satisfying $x^p=a$. By \autoref{lemma:wild_iff_lambda_not_div_inf_case} we have $\upsilon'(\pi)=p$, so we deduce $\upsilon'(x)=s$. Let $c,d$ be integers satisfying $cs+dp=1$; note that $c$ is prime to $p$, and we can take it with $0<c<p$. 
Then
$\upsilon'(x^c\pi^d)=1$ and so $x^c\pi^d$ is a uniformizer of $B$. By \autoref{lemma:explicit_p_basis} we have
        $$B=\bigoplus_{j=0}^{p-1}A(x^c\pi^d)^j
        \quad \text{as }A\text{-module}.$$
If $\partial_{K,a}$ is the $K$-derivation of $K_{F,a}$ introduced before, we have 
        $$\upsilon'(\partial_{K,a}(x^c\pi^d))=
        \upsilon'(cx^{c-1}\pi^d)=s(c-1)+pd
        =1-s.$$
Letting $n=\max\{s-1,0\}$ we see that $\psi=\pi^n\partial_{K,a}\in \Der_A(B)$. We have $\psi^{\circ p}=0$, so $\psi$ defines an $\alpha_p$-action on $\Spec(B)$ 
with quotient $\Spec(A)$.

Replacing $A$ by $A^\mathrm{csh}$, \autoref{rmk:algebra_presentation_wild_inf_case} shows that we may assume $a=\pi$. Then $s=1$ above, which shows that the action of $\alpha_p$ defined by $\psi$ is free.
\end{example}

\begin{example}\label{example:structure_Frob_ext_fer_case}
Suppose that $\widetilde{\lambda}(a)$ is divisible by $p$.
We may assume that $\widetilde{\lambda}(a)=\upsilon(a)$. Choose an uniformizer $\pi$ of $A$, and write $a=u\pi^{-pn}$ where $n\geq 0$ is an integer and $u\in A^\times$ is a unit. Suppose that the residue $\bar{u}\in k_A^p$. Then we can write 
$u=v^p+x\pi $ for some $x\in A$ and unit $v\in A^\times$. Hence
        $$a-(v\pi^{-n})^p=
        x\pi^{1-pn}$$
which shows that $\upsilon(a)<\upsilon(a+(v\pi^{-n})^p)$, contradicting our choice of $a$. So $\bar{u}\notin k_A^p$.

There is an element $y\in K_{F,a}\cong K_{F,u}$ satisfying $y^p-u=0$. Since $u\in B^\times$ we get $y\in B$. In $k_B$ we have $\bar{y}^p-\bar{u}=0$; as $\bar{u}\notin k_A^p$ we obtain that $\bar{y}\notin k_A$. Therefore $B$ is generated as $A$-module by $\{y^j\mid 0\leq j\leq p-1\}$ \autoref{lemma:explicit_p_basis}. The derivation $\partial_{K,u}$ sends $y$ to $1$, so it (co)-restricts to an additive derivation of $B$. This derivation defines a free $\alpha_p$-action on $\Spec(B)$ with quotient $\Spec(A)$.
\end{example}

\begin{remark}
The previous two examples show that (after possibly an \'{e}tale cover of $A$) there is always a \emph{free} $\alpha_p$-action on $\Spec(B)$ with geometric quotient $\Spec(A)$; in other words, $\Spec(B)\to \Spec(A)$ is always a torsor for some $\alpha_{p,\Spec(A)}$-action. This contrasts with the Artin--Schreier case (\autoref{prop:torsor_AS_case}).
\end{remark}

Denote by $q\colon X=\Spec(B)\to \Spec(A)=Y$ the induced morphism on spectra. Let $\mathbf{x}_B=V(\fm_B)\subset X$ be the closed point, which we regard as an effective Cartier divisor. In order to parallel \autoref{eqn:ramification_AS_ext}, one may wonder if we can define a ``ramification break" $\widetilde{r}$ for $A\hookrightarrow B$ with the property that
        $$K_X=q^*K_Y+(p-1)\widetilde{r}\cdot \mathbf{x}_B.$$
This deserves some comments. The natural pullback map
$q^*\omega_Y\to \omega_X$ is zero since $q$ is purely inseparable, so we can only consider $\omega_X\otimes q^*\omega_Y^{-1}$ inside $\mathrm{Pic}(X)$, or equivalently $K_{X/Y}$ inside $\mathrm{Cl}(X)$.
Finite duality theory yields
$\omega_X\cong q^*\omega_Y\otimes q^!\sO_Y$. 
Alternatively, if $\sF$ is the $1$-foliation on $X$ defining $q$ and $K_\sF$ is the divisor class corresponding to the divisorial sheaf $(\det\sF)^{[-1]}$, then the adjunction formula reads
        $$K_X=q^*K_Y+(p-1)K_\sF \quad 
        \text{in }\mathrm{Cl}(X),$$
see \cite[Proposition 2.10]{Patakfalvi_Waldron}---or 
\cite[\S 2]{Rudakov_Shafarevich} for a different formulation. 

Trying to lift these equalities to the Weil divisor group of $X$ can only lead us astray. To wit, we saw in the previous examples that there always exists an $\alpha_p$-action on $X$, given by an additive derivation $\psi\in \Der_A(B)$, whose quotient is $q\colon X\to Y$. We have a section
        $$\xi_\psi\in q^!\sO_Y=\mathrm{Hom}_Y(q_*\sO_X,\sO_Y), 
        \quad \xi_\psi(b)=\psi^{\circ(p-1)}(b) 
        \text{ for all }b\in q_*\sO_X.$$
Tziolas shows in \cite[Theorem 8.1]{Tziolas_Quotient_by_alpha_or_mu} that the map
        $$\mathrm{Hom}_X(q^!\sO_Y,\sO_X)\longrightarrow \sO_X,
        \quad \mathfrak{v}\mapsto \mathfrak{v}(\xi_\psi)$$
is an isomorphism on its image $\mathcal{I}^{[1-p]}$, where $\mathcal{I}$ is the ideal sheaf of the scheme-theoretic closed locus of the $\alpha_p$-action, which is the ideal generated by $\psi(B)$. We have $\mathcal{I}=\sO_X(-r_\psi\mathbf{x}_B)$ for some integer $r_\psi\geq 0$ and so
        $$\omega_X\cong q^*\omega_Y\otimes \sO((p-1)r_\psi \cdot \mathbf{x}_B).$$
Now choose an uniformizer $\pi$ of $A$: for any integer $m>0$, the derivation $\pi^m\psi\in \Der_A(B)$ is still additive, hence its defines another $\alpha_p$-action on $B$ with sub-ring of invariants $A$. We have
$r_{\pi^m\psi}=m\widetilde{e}+r_\psi$ and so we see that the integer $r_\psi$ cannot be used in any reasonable way to define a ramification break.

%% file: Application_to_lc_families.tex
\section{Application to base-change 
of locally stable families}\label{section:application}
Let $(\mathbf{0}\in C)$ be a germ of $k$-curve, and let $f\colon (X,\Delta)\to C$ be a strongly locally stable family 
\cite[Definition 2.1.9]{Benozzo-Posva}. 
Let $h\colon (\mathbf{0}'\in C')\to C$ be a finite flat morphism where $(\mathbf{0}'\in C')$ is a germ of smooth $k$-curve, and the function field extension $k(C)\hookrightarrow k(C')$ is Artin--Schreier (i.e.\ Galois of order $p$). Let also $F^n\colon C\to C$ denote the $n$-th iterate $k$-linear Frobenius of $C$ (\footnote{
    Except in the proof of \autoref{prop:wild_becomes_tame}, we think of $F^n$ as the the morphism induced by the $k$-linear $n$-th Frobenius map $\sO_{C,\mathbf{0}}\to \sO_{C,\mathbf{0}}$, not the inclusion of $p^n$-th powers.
}). We consider the cartesian diagrams
    $$\begin{tikzcd}
    X'\arrow[d, "f'" left] \arrow[r, "h'"] & X \arrow[d, "f"] \\
    C' \arrow[r, "h"] & C
    \end{tikzcd}
    \quad \text{and} \quad
    \begin{tikzcd}
    X^{(n)}\arrow[d, "f^{(n)}" left] \arrow[r, "F^n_{X/C}"] & X \arrow[d, "f"] \\
    C \arrow[r, "F^n"] & C
    \end{tikzcd}$$
and we let $\Delta'=h'^*\Delta$ and $\Delta^{(n)}=F_{X/C}^{n,*}\Delta$. Then $X'$ and $X^{(n)}$ are normal, $f'\colon (X',\Delta')\to C'$ and $f^{(n)}\colon (X^{(n)},\Delta^{(n)})\to C$ are families of pairs, and $K_{X'}+\Delta'$ and $K_{X^{(n)}}+\Delta^{(n)}$ are $\bQ$-Cartier: see \cite[\S 3.2, \S 3.3]{Benozzo-Posva}.

\begin{theorem}\label{thm:main_application}
Assume that $f^{(n)}\colon (X^{(n)},\Delta^{(n)})\to C$ is strongly locally stable for every $n\geq 1$. Then $f'\colon (X',\Delta')\to C'$ is strongly locally stable.
\end{theorem}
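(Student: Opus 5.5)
The strategy is to check strong local stability of $f'$ by computing discrepancies divisor by divisor. Since $X'$ is normal with $K_{X'}+\Delta'$ $\bQ$-Cartier (\cite[\S 3.2]{Benozzo-Posva}), it suffices to show that $(X',\Delta'+X'_{\mathbf{0}'})$ is log canonical near $X'_{\mathbf{0}'}$. This means proving $a(D;X',\Delta'+X'_{\mathbf{0}'})\geq -1$ for every prime divisor $D$ over $X'$ with centre over $\mathbf{0}'$; the remaining conditions of \cite[Definition 2.1.9]{Benozzo-Posva} are inherited as in \emph{op.cit.}

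Fix a proper birational $Y\to X$ and let $Y'=(Y\times_CC')^\nu$. Every divisor $D$ over $X'$ appears on such a $Y'$, since we may take $Y$ dominating the image of a model of $D$ under the quotient by $\bZ/p$. Let $E\subset Y$ be the image of $D$, so $\sO_{Y,E}\hookrightarrow\sO_{Y',D}$ is either étale or an Artin--Schreier extension of DVRs as in \autoref{section:AS_ext_DVR}.

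\textbf{Easy cases.}
\begin{itemize}
\item If the extension is étale at $D$, then $a(D)=a(E;X,\Delta+X_{\mathbf{0}})\geq -1$ by local stability of $f$.
\item If $E$ does not lie over $\mathbf{0}$, the relevant local computation is away from the special fibre.
\end{itemize}

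\textbf{Ramified case over $\mathbf{0}$.} Let $\pi$ be a uniformizer at $\mathbf{0}$ and let $k(C')=k(C)_{\wp,g}$. Here I would use the formulas \eqref{eqn:ramification_AS_ext}: $K_{Y'}=q^*K_Y+(p-1)r\,D$ and $q^*E=eD$. Writing $c=\coeff_E Y_\mathbf{0}$, $\coeff_D Y'_{\mathbf{0}'}$ is determined by $e$, $c$ and the ramification index of $\mathbf{0}'\mapsto\mathbf{0}$, which is $p$ since $h$ ramifies at $\mathbf{0}'$ (otherwise $h$ is étale and we are done).

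Together with Propositions \ref{prop:ramif_break_wild_case} and \ref{prop:ramif_break_feroce_case}, this yields
$$a(D)+1 = e\,(a(E)+1)+(\text{correction in } r,\ \lambda_{\sO_{Y,E}}(g),\ \lambda_{\sO_{C,\mathbf{0}}}(g)).$$
The key comparison is $\lambda_{\sO_{Y,E}}(g)$ versus $c\cdot\lambda_{\sO_{C,\mathbf{0}}}(g)$. When $E$ is tame ($p\nmid c$), the leading term $c\lambda_C(g)$ cannot be reduced by \autoref{algo:compute_lambda}, because $\lambda_C(g)$ is prime to $p$ ($C'$ smooth, $k$ algebraically closed). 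This pins down $\lambda$, $e=p$ and $r$, and the correction is nonnegative, giving $a(D)\geq -1$. This is the content I would establish as the tame corollary.

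\textbf{Wild case ($p\mid c$).} This is the main obstacle, and the plan is to pass to Frobenius twists. By \autoref{thm_intro:wild_becomes_tame}, for $n\gg1$ the divisor $E^{(n)}$ on $Y^{(n)}$ is tame for $Y^{(n)}\to C$. Base-changing along $h$ commutes with $F^n$, so $X'^{(n)}=X^{(n)}\times_CC'$. By the hypothesis, $(X^{(n)},\Delta^{(n)})\to C$ is strongly locally stable, and applying the tame case to it shows that the divisor $D^{(n)}$ over $X'^{(n)}$ has discrepancy $\geq -1$.

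It then remains to descend from $X'^{(n)}\to C'$ to $X'\to C'$. For this I would invoke Frobenius descent for local stability, \cite[\S 6.1]{Benozzo-Posva}: if the Frobenius twist of a family over a smooth curve is (strongly) locally stable, so is the family. To make this work one needs an identification of $(X'^{(n)})$ with the $n$-th Frobenius twist of $X'\to C'$ up to the $k$-linear Frobenius of $C'$, so that \emph{op.cit.} applies to $f'$.

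Proving \autoref{thm_intro:wild_becomes_tame} itself is where the real work lies. One would compare $\widetilde{\lambda}_{\sO_{Y,E}}(t)$, for $t$ a coordinate with $\pi=t^{p}$ in the twisted ring, with the coefficient $c$. One would then show via \autoref{lemma:wild_iff_lambda_not_div_inf_case} and \autoref{lemma:infinitely_many_pth_roots} that the divisibility of $c$ by $p$ strictly decreases after finitely many twists. \autoref{lemma:infinitely_many_pth_roots} is needed because residue fields have no nontrivial infinitely $p$-divisible elements, which forces the process to terminate.
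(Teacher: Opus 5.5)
Your outline is the paper's proof. It has the same reduction to vertical divisors on a $\bZ/p$-equivariant model $Y'\to Y$, the same \'etale case, and the same formula $a(D)+1=p\,(a(E)+1)$ for tame $E$ via $\lambda_{\sO_{Y,E}}$. For wild $E$ it likewise passes to $Y^{(n)}$ with $E^{(n)}$ tame, uses that $h$ commutes with $F^n$, applies the tame case to $X^{(n)}\to C$, and finishes with Frobenius descent.

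The one step that fails as you state it is the descent. The integer $n$ depends on $D$ (through $E$). What you have shown is only $a(D^{(n)};X'^{(n)},\Delta'^{(n)}+X'^{(n)}_{\mathbf{0}'})\geq -1$ for this single divisor. You never know that $X'^{(n)}\to C'$ is locally stable for any fixed $n$. That would require treating the divisors over $X'^{(n)}$ lying over wild divisors of $X^{(n)}\to C$, which is the original problem again, and nothing provides an $n$ that works for all $D$ at once. So the family-level statement ``if the Frobenius twist is locally stable, so is the family'' has no input here.

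What is needed is the divisorwise version, which is what the paper uses (Proposition 6.1.1 and the \emph{proof} of Proposition 6.1.2 of \cite{Benozzo-Posva}): for a fixed divisor, $a(D^{(j)})\geq -1$ implies $a(D^{(j-1)})\geq -1$. Write $a_D=a(D;X',\Delta'+X'_{\mathbf{0}'})$. This implication follows from the discrepancy formulas for one Frobenius base-change:
\begin{itemize}
\item $a(D^{(1)})+1=p\,(a_D+1)$ when $D$ is tame;
\item $a(D^{(1)})$ equals $p\,a_D-(p-1)\gamma$ or $a_D-\frac{p-1}{p}\gamma$, with $\gamma\geq 0$, when $D$ is wild.
\end{itemize}
Descending from $j=n$ to $j=1$ then gives $a_D\geq -1$.

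Separately, your sketch of \autoref{prop:wild_becomes_tame} would not close as written. \autoref{lemma:infinitely_many_pth_roots} only gives $\mathfrak{K}^{p^\infty}=k$. Termination also needs some coefficient of the unit $u$ in $t=u\pi^m$ to lie outside $k$. That is \autoref{lemma:coeff_not_all_constant}, and it is where the algebraic closedness of $k(C)$ in $k(X)$ enters. Since you cite the proposition rather than reprove it, this does not affect the proof of the theorem.
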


The rest of the section is devoted to the proof of this theorem. We begin with some simple remarks. First of all, since the geometric fibers of $f'$ and of $f$ are the same, the content of the theorem is about the discrepancies of the pair $(X',\Delta'+X'_{\mathbf{0}'})$. Since log canonicity is preserved by \'{e}tale base-change \cite[Proposition 2.15]{Kollar_Singularities_of_the_minimal_model_program}, and the separable morphism $C'\to C$ is generically \'{e}tale, we reduce to the following situation:
$C'\to C$ is ramified at $\mathbf{0}'$, and we consider divisors over $X'$ that are vertical over $C'$. 

\begin{lemma}\label{lemma:divisor_from_downstairs}
Let $D$ be a prime divisor over $X'$. Then there exists a birational morphism $Y\to X$ from a normal variety such that if $\phi\colon Y'\to Y$ is the normalization of $Y$ in $k(X')$, then $D$ is a prime divisor on $Y'$. If the Galois $\bZ/p$-action on $Y'$ is not free at the generic point of $D$, then there exists a unique prime divisor $E\subset Y$ such that $\Supp(\phi^*E)=D$.
\end{lemma}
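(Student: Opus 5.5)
We prove the two assertions separately: first we produce $Y$, then we use Galois invariance to get the uniqueness of $E$.

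\emph{Existence of $Y$.} Let $v_D$ be the divisorial valuation of $k(X')$ defined by $D$. By \autoref{lemma:affine_version_of_Stein}, or directly from the Artin--Schreier hypothesis, $k(X')$ is a degree $p$ Galois extension of $k(X)$. Let $v$ be the restriction of $v_D$ to $k(X)$. Since $k(X)\subset k(X')$ is finite, $v$ is again a divisorial valuation: its value group has finite index in $\bZ$, and its residue field has the same transcendence degree as that of $v_D$ (Abhyankar's inequality holds with equality upstairs). So $v$ is realized by a prime divisor $E$ on some normal variety $Y$ that is birational and proper over $X$; for instance, blow up the center of $v$ repeatedly and normalize (Zariski). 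Let $\phi\colon Y'\to Y$ be the normalization of $Y$ in $k(X')$. The valuation ring $\sO_{v_D}$ dominates $\sO_{Y,E}$, which is a DVR. It is therefore a localization of the normalization $B$ of $\sO_{Y,E}$ in $k(X')$, and $B$ is a semilocal Dedekind domain. Hence $v_D$ is the valuation of some height one prime of $Y'$ lying over $E$, and $D$ is a prime divisor on $Y'$, as claimed. If one prefers to start from a model of $X'$ on which $D$ appears, a normalized blow-up of $X$ dominating its image under a Galois-equivariant construction works equally well, but the valuation argument above is the cleanest.

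\emph{Uniqueness of $E$.} Let $E=\phi(D)$. This is a prime divisor because $\phi$ is finite and therefore preserves dimension. The prime divisors of $Y'$ over $E$ correspond to the maximal ideals of $B$, and $\bZ/p$ acts transitively on them, since $\sO_{Y,E}=B^{\bZ/p}$ by the argument of \cite[Lemma 6.2.1]{Benozzo-Posva} recalled in \autoref{section:AS_ext_DVR}. We now apply the dichotomy of \autoref{section:AS_ext_DVR} to $A=\sO_{Y,E}$. If $B$ is not local, then the action is free and $A\hookrightarrow B$ is étale, which is excluded by our hypothesis that the action is not free at the generic point of $D$. So $B$ is local and $D$ is the only prime divisor over $E$. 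Hence $\Supp(\phi^*E)=D$, with $\phi^*E=e\cdot D$ by \autoref{eqn:ramification_AS_ext}.

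It remains to show that no other prime divisor $E'\neq E$ of $Y$ satisfies $\Supp(\phi^*E')=D$. This holds because $\phi(D)=E$ determines $E$ uniquely.

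The main obstacle is the first step, namely realizing the restricted valuation as a divisor on a normal birational model $Y$. The rest is formal from the DVR dichotomy.
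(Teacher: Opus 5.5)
Your proof is correct, but for the existence of $Y$ you take a different route from the paper.

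\textbf{The paper's route (upstairs).} It realizes the finitely many conjugate valuations $\upsilon_D\circ\sigma$, $\sigma\in\bZ/p$, simultaneously as divisors on one normal quasi-projective model $Z'\to X'$ (Zariski--Abhyankar). It then extends the Galois action to a $\bZ/p$-stable affine open $Y'\subset Z'$ containing their generic points, and defines $Y=Y'/(\bZ/p)$.

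\textbf{Your route (downstairs).} You restrict $\upsilon_D$ to $k(X)$ and note that the restriction is again divisorial. You realize it as a divisor $E$ on a normal model $Y\to X$. You then recover $D$ on the normalization $Y'$ of $Y$ in $k(X')$: $\sO_{\upsilon_D}$ dominates the DVR $\sO_{Y,E}$, so it is the localization of the semilocal Dedekind ring $B$ at one of its maximal ideals.

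\textbf{What each approach buys.} On your route, several things come for free by functoriality of normalization:
\begin{itemize}
\item the $\bZ/p$-action on $Y'$ and the identification $Y'/(\bZ/p)=Y$;
\item the presence of all conjugates of $D$ on $Y'$;
\item the morphism $\psi\colon Y'\to X'$, via the universal property of relative normalization, since $X'$ is normal and finite over $X$.
\end{itemize}
So you bypass the step of extending a birational group action to a stable open set, which the paper handles only briefly. The cost is a routine check: the restricted valuation is divisorial and has a center on $X$, namely the image of the center of $\upsilon_D$ on $X'$. You should state the latter explicitly, since Zariski's realization lemma requires it.

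The second half is the same as in the paper: the dichotomy of \autoref{section:AS_ext_DVR} forces $B$ to be local in the non-free case, giving $\Supp(\phi^*E)=D$, and uniqueness follows from $\phi(D)=E$.
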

\begin{proof}
Since $k(X')/k(X)$ is Galois of degree $p$, we have an action of $G=\bZ/p$ on $k(X')$ with invariant sub-field $k(X)$. Given $\sigma$ in the image of $G\hookrightarrow \Aut_{k(X)}(k(X'))$ and a normalized discrete valuation $\upsilon$ of $k(X')$, the composition $\upsilon \circ \sigma$ is again a normalized discrete valuation. The prime divisor $D$ corresponds to a normalized discrete valuation $\upsilon_D$ of $k(X')$; by a theorem of Zariski and Abhyankar \cite[Lemma 2.22]{Kollar_Singularities_of_the_minimal_model_program}, there exists a birational morphism $Z'\to X'$ where $Z'$ is a normal quasi-projective variety over $k$, such that the centers on $Z'$ of the finitely many valuations $\{\upsilon_D\circ \sigma\mid \sigma \in G\}$ all have codimension one. In other words, $D$ is a prime divisor on $Z'$ and the Galois action on $k(Z')=k(X')$ extends to an open subset which contains the generic point of $D$. Since $Z'$ is quasi-projective over $k$, we can find an \emph{affine} open subset $Y'$ on which $G$ acts which contains the generic point of $D$ and its conjugates. Then we let $Y=Y'/G$: it is a normal affine variety endowed with a birational morphism $Y\to X$. The final statement follows immediately from \cite[\S X]{Raynaud_Anneaux_locaux_henseliens}, see the beginning of \autoref{section:AS_ext_DVR}.
\end{proof}

For the rest of the proof, let us fix a prime divisor $D$ over $X'$, which is vertical over $C$. We let $\subset Y\overset{\varphi}{\longrightarrow} X$ be as in \autoref{lemma:divisor_from_downstairs}, with induced morphisms $\phi\colon Y'\to Y$ and $\psi\colon Y'\to X'$, and we write $E=\phi(D)$. If the $\bZ/p$-action is free at the generic point of $D$, then the quotient $Y'\to Y$ is \'{e}tale generically along $D$. In that case 
$a(D;X',X'_{\mathbf{0}'}+\Delta')=a(E;X, X_{\mathbf{0}}+\Delta)\geq -1$ by \cite[Theorem A.0.4]{Benozzo-Posva}
---note that since every component of $\Delta$ dominates $C$, we have $X'_{\mathbf{0}'}+\Delta'=(X_{\mathbf{0}}+\Delta)_{X'}$ where the right-hand side is defined just before \cite[Theorem A.0.4]{Benozzo-Posva}). So we will assume that the $\bZ/p$-action is ramified at the generic point of $D$; in that case the pullback of $E$ to $Y'$ is supported on $D$. 

\medskip
Since the residue field of $C$ at $\mathbf{0}$ is algebraically closed, the ramification of $\sO_{C,\mathbf{0}}\hookrightarrow \sO_{C',\mathbf{0}'}$ is necessarily wild. To prove \autoref{thm:main_application} we may replace $C$ by the henselization of $\sO_{C,\mathbf{0}}$; in particular, we can find a uniformizer $t$ of $\sO_{C,\mathbf{0}}$ and a positive integer $s>0$ prime to $p$ such that $C'\to C$ is the Artin--Schreier cover associated to the function $t^{-s}$. Clearly $\lambda_{\sO_{C,\mathbf{0}}}(t^{-s})=s+1$ (\autoref{def:ramification_break} and \autoref{prop:ramif_break_wild_case}).
Finally, we let 
        $$\lambda=\lambda_{\sO_{Y,E}}(t^{-s}), \quad
        \text{and} \quad m=\coeff_EY_\mathbf{0}.$$
If $t^{-s}$ had an Artin--Schreier root in $k(X)$, then $k(C)$ would not be algebraically closed in $k(X)$. However $f\colon X\to C$ satisfies the hypothesis of \autoref{lemma:affine_version_of_Stein}, cf.\ \cite[Definition 2.1.9]{Benozzo-Posva}, so this is a contradiction. Moreover, $t^{-s}$ does not belong to $\sO_{Y,E}$ (for example 
by extension-contraction of ideals along the faithfully flat extension $\sO_{C,\mathbf{0}}\hookrightarrow \sO_{Y,E})$.
Thus $\lambda <0$ by \autoref{lemma:lambda_well_defined}.

\begin{proposition}\label{prop:discrep_AS_divisor}
With the notation as above:
    \begin{enumerate}
        \item If the ramification of $\sO_{Y,E}\hookrightarrow \sO_{Y',E}$ is \emph{wild}, then
                $$a(D;X',\Delta'+X'_{\mathbf{0}'})
                =p\cdot a(E;X,\Delta+X_\mathbf{0})
                -(p-1)\cdot (sm+\lambda -1).$$
        \item If the ramification of $\sO_{Y,E}\hookrightarrow \sO_{Y',E}$ is \emph{ferocious}, then
                $$a(D;X',\Delta'+X'_{\mathbf{0}'})
                = a(E;X,\Delta+X_\mathbf{0})
                -\frac{p-1}{p}\cdot (sm+\lambda).$$
    \end{enumerate}
\end{proposition}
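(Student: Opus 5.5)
The plan is to compute both discrepancies at the generic point of $D$ by pulling back along the square formed by $\phi\colon Y'\to Y$, $h'\colon X'\to X$ and the birational maps $\varphi\colon Y\to X$, $\psi\colon Y'\to X'$. We then compare coefficients of $D$. All identities are needed only as $\bQ$-divisors near the generic point of $D$ and of $E$, where $Y'$ and $Y$ are regular. Write $e$ and $r$ for the ramification index and ramification break of the DVR extension $\sO_{Y,E}\hookrightarrow \sO_{Y',D}$.

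\emph{Step 1: the function field extension.} The extension $k(Y)\hookrightarrow k(Y')$ is the Artin--Schreier extension defined by $t^{-s}$, since $k(X')=k(X)\otimes_{k(C)}k(C')$. So \eqref{eqn:ramification_AS_ext} together with \autoref{prop:ramif_break_wild_case} and \autoref{prop:ramif_break_feroce_case} give
$$K_{Y'}=\phi^*K_Y+(p-1)r\,D$$
near the generic point of $D$. Here $(e,r)=(p,-\lambda+1)$ in the wild case and $(e,r)=(1,-\lambda/p)$ in the ferocious case, and $\phi^*E=eD$.

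\emph{Step 2: the base-change formula on $X'$.} The same reasoning for $\sO_{C,\mathbf{0}}\hookrightarrow\sO_{C',\mathbf{0}'}$ gives a wild extension with break $s+1$, so $K_{C'}=h^*K_C+(p-1)(s+1)\mathbf{0}'$. For strongly locally stable families, the relative canonical divisor is compatible with base change, i.e.\ $K_{X'/C'}=h'^*K_{X/C}$; I would take this from \cite[\S 3.2]{Benozzo-Posva}. Combining these with $h'^*X_\mathbf{0}=pX'_{\mathbf{0}'}$ and $h'^*\Delta=\Delta'$ yields
$$h'^*(K_X+\Delta+X_\mathbf{0})=K_{X'}+\Delta'+X'_{\mathbf{0}'}-(p-1)s\,X'_{\mathbf{0}'}.$$
This is the step I expect to need the most care: one must check that the relative canonical class really commutes with this non-flat-over-the-fibre ramified base change, using the $S_2$/normality of $X'$ supplied by strong local stability, so that the identity of divisorial sheaves holds on all of $X'$. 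It would then suffice to check the identity in codimension one.

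\emph{Step 3: compare coefficients.} By definition,
$$K_Y+\varphi^{-1}_*\Delta=\varphi^*(K_X+\Delta+X_\mathbf{0})+a(E;X,\Delta+X_\mathbf{0})E+\cdots,$$
with the analogous formula for $D$ over $X'$. Pull the first identity back by $\phi$, use $\varphi\circ\phi=h'\circ\psi$, and insert Steps 1 and 2. The coefficient of $\psi^*X'_{\mathbf{0}'}$ along $D$ is computed as follows: $t$ vanishes to order $m$ along $E$, hence to order $me$ along $D$, and $t$ equals $t'^p$ times a unit on $C'$. So $\coeff_D\psi^*X'_{\mathbf{0}'}=me/p$. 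This gives
$$a(D;X',\Delta'+X'_{\mathbf{0}'})=e\cdot a(E;X,\Delta+X_\mathbf{0})+(p-1)r-(p-1)s\frac{me}{p}.$$
Substituting $(e,r)=(p,1-\lambda)$ yields
$$p\,a(E)-(p-1)(sm+\lambda-1),$$
and $(e,r)=(1,-\lambda/p)$ yields
$$a(E)-\tfrac{p-1}{p}(sm+\lambda),$$
which are the two claimed formulas.
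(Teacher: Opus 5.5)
Your proposal is correct and follows the paper's own argument. The paper also obtains $K_{X'}=h'^*K_X+(p-1)(s+1)X'_{\mathbf{0}'}$, citing \cite[0C17]{Stacks_Project}; the base change is flat because $f$ is, so your Step 2 worry is milder than you suggest. It then computes $\coeff_D\psi^*X'_{\mathbf{0}'}=me/p$, uses $K_{Y'}=\phi^*K_Y+(p-1)r\,D$ near the generic point of $D$, and compares coefficients to get $a_D=e\,a_E+(p-1)\bigl(r-\tfrac{sme}{p}\bigr)$, before substituting $(e,r)=(p,1-\lambda)$ or $(1,-\lambda/p)$ exactly as you do.
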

\begin{proof}
We write $a_E=a(E;X,\Delta+X_\mathbf{0})$ and $a_D=a(D;X',\Delta'+X'_{\mathbf{0}'})$ for ease of notation. We let $e$ and $r$ be respectively the ramification index and the ramification break of $\sO_{Y,E}\hookrightarrow \sO_{Y',E}$.
Consider the following commutative diagram
        $$\begin{tikzcd}
        Y \arrow[d, "\varphi"] & Y\times_CC'\arrow[d, "\varphi'"] 
        \arrow[l, "g" above]
        & Y' \arrow[l, "\nu" above] \arrow[ll, bend right, "\phi" above] \arrow[dl, "\psi"]
        \\
        X & X'. \arrow[l,"h'" above]
        \end{tikzcd}$$
Since the ramification of $C'\to C$ is wild, we have $h^*[\mathbf{0}]=p\cdot [\mathbf{0}']$ and $K_{C'}=h^*K_C+(p-1)(s+1)$ (\autoref{eqn:ramification_AS_ext} and \autoref{prop:ramif_break_wild_case}). Therefore
$g^*Y_\mathbf{0}=p\cdot (Y\times_CC')_{\mathbf{0}'}$, and
$K_{X'}=h'^*K_X+(p-1)(s+1)X'_{\mathbf{0}'}$ by \cite[0C17]{Stacks_Project}. 
Thus we can write, around the generic point of $D$:
    \begin{eqnarray*}
        \psi^*h'^*(K_X+\Delta+X_\mathbf{0}) &=&
        \psi^*\big((K_{X'}+\Delta+X'_{\mathbf{0}'})+
        (-1-(p-1)(s+1)X'_{\mathbf{0}'}\big) \\
        &=&
        K_{Y'}-a_D\cdot D+(1-p)\cdot \nu^*(Y\times_CC')_{\mathbf{0}'} \\
        &=&
        K_{Y'}-a_D\cdot D+s\frac{1-p}{p}m\cdot \phi^*E \\
        &=&
        K_{Y'}+\left(
        \frac{1-p}{p}sme-a_D
        \right)\cdot D.
    \end{eqnarray*}
On the other hand, still around the generic point of $D$:
    \begin{eqnarray*}
        \phi^*\varphi^*(K_X+\Delta+X_\mathbf{0}) &=& 
        \phi^*(K_Y-a_E\cdot E) \\
        &=&
        K_{Y'^\nu}-\big(
        (p-1)r+ea_E\big) \cdot E.
    \end{eqnarray*}
By comparing the two expressions, we find
        $$a_D=e\cdot a_E+(p-1)\left(
        r-\frac{sme}{p}
        \right).$$
If the extension is wild, then $e=p$ and $r=-\lambda+1$; while if the extension is ferocious, $e=1$ and $r=-\lambda/p$ (\autoref{prop:ramif_break_wild_case} and \autoref{prop:ramif_break_feroce_case}). Substituting into the above expression yields the result.
\end{proof}

Recall that 
$E$ is said to be a \emph{wild divisor} for $Y\to C$ if $\coeff_EY_\mathbf{0}\in p\bZ$, and a \emph{tame divisor} otherwise \cite[\S 4]{Benozzo-Posva} (\footnote{
    This terminology is unrelated to tame and wild extensions of DVRs.
}).

\begin{corollary}\label{cor:AS_discrep_tame_div}
Assume that $E$ is a tame divisor for $Y\to C$. Then $\sO_{Y,E}\hookrightarrow \sO_{Y',D}$ is wildly ramified, and
        $$a(D;X',\Delta'+X'_{\mathbf{0}'})+1
                =p\cdot \big(a(E;X,\Delta+X_\mathbf{0})+1\big).$$
\end{corollary}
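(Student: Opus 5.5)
The plan is to compute $\lambda=\lambda_{\sO_{Y,E}}(t^{-s})$ exactly when $E$ is tame, and then substitute into \autoref{prop:discrep_AS_divisor}. Since $\sO_{C,\mathbf{0}}\hookrightarrow\sO_{Y,E}$ is a local extension of DVRs in which $t$ has valuation $m=\coeff_EY_\mathbf{0}$, we have $\upsilon_E(t^{-s})=-sm$. By hypothesis $p\nmid m$, and $p\nmid s$, so $-sm$ is prime to $p$.

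To compute $\lambda$ we run \autoref{algo:compute_lambda} on $g=t^{-s}$ in $K^\mathrm{csh}$. The valuation $\upsilon^\mathrm{csh}(g)=-sm$ (strict henselization and completion do not change the valuation of elements of $K$) is not in $p\bZ$, so the assertion in Step (1) is false immediately. The algorithm therefore returns $\lambda=-sm<0$.

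Equivalently, for any $x\in K^\mathrm{csh}$ the valuation $\upsilon(\wp(x))$ is either $\ge 0$ or equal to $p\,\upsilon(x)\in p\bZ$. Hence $\upsilon(g+\wp(x))\le -sm$ by the ultrametric inequality, since the two valuations differ whenever $\upsilon(\wp(x))<0$.

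Next, \autoref{eqn:lambda_and_ram_type} states that ferocious ramification happens exactly when $p\mid\lambda$. Here $p\nmid -sm$, so the extension $\sO_{Y,E}\hookrightarrow\sO_{Y',D}$ is wildly ramified, which is the first claim. (We are in the ramified situation by the standing assumption made before the proposition.)

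Finally, we plug $\lambda=-sm$ into case (1) of \autoref{prop:discrep_AS_divisor}:
$$a_D=p\,a_E-(p-1)(sm-sm-1)=p\,a_E+p-1.$$
This rearranges to $a_D+1=p(a_E+1)$.

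The only delicate point is justifying that the valuation of $t^{-s}$ computed in $\sO_{Y,E}^\mathrm{csh}$ equals $-sm$. This holds because $\sO_{Y,E}\to\sO_{Y,E}^\mathrm{csh}$ is unramified, so a uniformizer stays a uniformizer. Otherwise the argument is a direct calculation.
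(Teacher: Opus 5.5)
Your proposal is correct and follows the paper's proof: compute $\lambda=-sm$, deduce wild ramification from \autoref{eqn:lambda_and_ram_type}, and substitute into case (1) of \autoref{prop:discrep_AS_divisor}. The only difference is cosmetic: the paper first passes to the strict henselization to choose a uniformizer $\pi$ with $t=\pi^m$, so that $t^{-s}=\pi^{-ms}$, whereas you read off $\lambda=-sm$ directly from the fact that $\upsilon(t^{-s})=-sm$ is prime to $p$, which is slightly more direct.
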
 
\begin{proof}
By assumption $m$ is prime to $p$. Passing to the strict henselization of $\sO_{Y,E}$ if necessary, we may find a uniformizer $\pi$ such that $t=\pi^m$. Then 
$\sO_{Y,E}\hookrightarrow \sO_{Y',D}$ is the Artin--Schreier extension defined by the function
$t^{-s}=\pi^{-ms}$. Hence $\lambda=-ms$, which is prime to $p$, so the ramification is wild by \autoref{eqn:lambda_and_ram_type}, and \autoref{prop:discrep_AS_divisor} yields
        $$a(D;X',\Delta'+X'_{\mathbf{0}'})
        =p \cdot a(E;X,\Delta+X_{\mathbf{0}})+p-1$$
as claimed.
\end{proof}

This illustrates how nice tame divisors are to handle; and, in fact, we shall show that every wild divisor becomes a tame one after sufficiently many Frobenius base-change.
For every $n\geq 1$, we let $Y^{(n)}$ be the normalization of $Y\times_{C,F^n}C$; it supports a unique prime divisor $E^{(n)}$ which dominates $E$ through the structural morphism $Y^{(n)}\to Y$.

\begin{lemma}\label{lemma:coeff_not_all_constant}
Let $t$ be a uniformizer of $\sO_{C,\mathbf{0}}$ and $\pi$ be a uniformizer of $\sO_{Y,E}^\mathrm{csh}$. Write $t=u\pi^m$ with $m\in \bZ_{>0}$ and $u\in (\sO_{Y,E}^\mathrm{csh})^\times$. Then in the power series expansion
        $$u=\sum_{i\geq 0}u_i\pi^i, \quad u_i\in k(E)^\mathrm{sep},$$
not all the coefficients $u_i$ belong to $k$.
\end{lemma}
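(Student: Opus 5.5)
We argue by contradiction: suppose every coefficient $u_i$ lies in $k$. Then $u$ is a power series in $\pi$ with coefficients in $k$, so $u\in k\llbracket \pi\rrbracket\subset \sO_{Y,E}^\mathrm{csh}$, and its constant term $u_0$ is a nonzero element of $k$. Since $k$ is algebraically closed, $u$ admits $p^n$-th roots in $k\llbracket\pi\rrbracket$ for every $n$ once we adjoin suitable roots of $\pi$. More directly, $t=u\pi^m$ then lies in the subring $k\llbracket \pi\rrbracket$. The plan is to show that this forces $t$, viewed as an element of the residue-type data, to be ``too divisible'', which contradicts \autoref{lemma:infinitely_many_pth_roots} combined with the geometric integrality of the generic fiber encoded in \autoref{lemma:affine_version_of_Stein}.

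Concretely, I would proceed as follows. First, observe that the derivations of $\sO_{Y,E}^\mathrm{csh}$ over $k$ include the continuous $k(E)^\mathrm{sep}$-linear derivation $\partial/\partial\pi$, and also derivations coming from a $p$-basis of $k(E)^\mathrm{sep}$ over $k$ (which is nonempty when $\dim E\geq 1$; here $\dim X\ge 2$ since $E$ is a vertical divisor over a curve, so $k(E)$ has transcendence degree $\geq 1$ over $k$). Let $\delta$ be the derivation of $\sO_{Y,E}^\mathrm{csh}=k(E)^\mathrm{sep}\llbracket\pi\rrbracket$ extending a nonzero derivation $\delta_0$ of $k(E)^\mathrm{sep}$ over $k$ coefficientwise, with $\delta(\pi)=0$. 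If all $u_i\in k$, then $\delta(t)=0$. Now $t$ comes from $\sO_{C,\mathbf{0}}$, and more strongly every element of $k(C)$ is killed by $\delta$. The key step is to argue that the derivation $\delta$, restricted to $\sO_{Y,E}$ (or rather to $k(Y)=k(X)$ via the identification $\Der_k^{\mathrm{cont}}(A^\mathrm{csh})=(\Omega^1_A\otimes A^\mathrm{csh})^\vee$ recalled in the preliminaries), kills $k(C)$ identically, and one then hopes to produce a contradiction. But this alone is not contradictory: $k(X)/k(C)$ has many derivations. So the real content must be different.

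The better route: the statement ``all $u_i\in k$'' must be contradicted using the choice of $\pi$ being arbitrary. Changing the coefficient field/uniformizer changes the $u_i$; the claim is for the given expansion. I would instead exploit that $t\in k\llbracket\pi\rrbracket$ means $t=u\pi^m$ has, for the map $k\llbracket T\rrbracket\to\sO_{Y,E}^\mathrm{csh}$, $T\mapsto t$, a factorization through $k\llbracket\pi\rrbracket$, a finite extension of degree $m$ of $k\llbracket t\rrbracket$. Then $\sO_{C,\mathbf 0}^{\mathrm h}\to \sO_{Y,E}^\mathrm{csh}$ factors through a nontrivial finite extension when $m>1$ (and if $m\ge1$ with $p\mid m$ in the wild case, inseparably). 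Taking algebraic closure of $k(C)$ in the fraction field and descending (using excellence and that $k\llbracket \pi\rrbracket\cap \mathrm{Frac}$ of the henselization is algebraic over $k(C)$), one obtains that $k(C)^\mathrm{h}$ is not algebraically closed in the completed local field of $E$, which contradicts the geometric integrality of the generic fiber via \autoref{lemma:affine_version_of_Stein} (applied after henselization). The main obstacle is this descent from the completed strict henselization back to $k(X)$: I expect to use \autoref{lemma:sep_vs_p_infty} and \autoref{lemma:infinitely_many_pth_roots} to control the inseparable part, showing the elements $\pi^{m}$-type algebraic relations force an element of $(k(E)^\mathrm{sep})^{p^\infty}$ outside $k$, the case $m=1$ being handled by noting that the wild hypothesis $p\mid m$ rules it out.
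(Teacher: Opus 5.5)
Your argument is the paper's argument. Assuming all $u_i\in k$, you get $t\in k\llbracket\pi\rrbracket$, so $\Frac(k\llbracket t\rrbracket)\subset\Frac(k\llbracket \pi\rrbracket)\subset\Frac(\sO^\mathrm{csh}_{Y,E})$ with the first extension of degree $m$, and you want to contradict \autoref{lemma:affine_version_of_Stein}. The gap is exactly the step you call the ``main obstacle'', and it cannot be filled.

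\autoref{lemma:affine_version_of_Stein} says that $k(C)$ is algebraically closed in $k(X)$. It says nothing about $\Frac(k\llbracket t\rrbracket)$ inside $\Frac(\sO^\mathrm{csh}_{Y,E})$, and it cannot be ``applied after henselization'': henselization and completion of $\sO_{Y,E}$ adjoin elements algebraic over $k(C)$, so the geometric generic fibre of $\Spec\sO^\mathrm{sh}_{Y,E}\to\Spec\sO^\mathrm{h}_{C,\mathbf{0}}$ need not be integral. The paper's proof makes the same leap (``passing to fraction fields \ldots\ this contradicts \autoref{lemma:affine_version_of_Stein}''), and in fact the statement as written is false.

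The tame case already fails. For $m=1$, take $E=X_\mathbf{0}$ for a smooth family and $\pi=t$, so $u=1$. For any $m$ prime to $p$, $u$ has an $m$-th root in the strictly henselian ring $\sO^\mathrm{csh}_{Y,E}$, and the uniformizer $u^{1/m}\pi$ gives $t=(u^{1/m}\pi)^m$. You saw that $m>1$ is needed and invoked a wildness hypothesis $p\mid m$ that is not in the statement. But that does not rescue the argument either.

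Here is a wild counterexample. Take $X=\Spec k[x,y]\to C=\Spec k[t]$ with $t=x$.

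\begin{enumerate}
\item Let $L=k(x,y)(z)$ with $z^p-z=x^{-1}$, and put $\tau=z^{-1}$. Then $L=k(\tau,y)$ and $x=\tau^p(1-\tau^{p-1})^{-1}$.
\item Let $w$ be the monomial (divisorial) valuation of $k(\tau,y)$ in the coordinates $(\tau,y-\tau)$ with weights $(1,3)$.
\item The Galois generator $\sigma$ acts by $\tau\mapsto\tau/(1+\tau)$ and $y\mapsto y$. Hence $\sigma(y-\tau)=(y-\tau)+\tau^2/(1+\tau)$ has $w$-value $2\neq 3$, so $w\circ\sigma\neq w$ and the decomposition group is trivial.
\item Therefore $v=w|_{k(x,y)}$ has $p$ distinct extensions to $L$, each with $e=f=1$, and $\tau$ lies in the completion of $k(x,y)$ at $v$, with $v(\tau)=1$.
\item Since $v(y)=1$, $v$ is normalized, and $v(t)=p$. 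Let $E$ be the divisor of $v$ on a model $Y\to X$. Then $E$ is wild with $m=p$, and $\pi=\tau$ is a uniformizer of $\sO^\mathrm{csh}_{Y,E}$.
\item The unit $u=\sum_{j\geq 0}\tau^{j(p-1)}$ has all its coefficients in $\bF_p\subset k$, so the conclusion of the lemma fails.
\end{enumerate}

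In this example $\Frac(k\llbracket\tau\rrbracket)$ is an Artin--Schreier extension of $\Frac(k\llbracket t\rrbracket)$, since $\tau^{-p}-\tau^{-1}=t^{-1}$, sitting inside $\Frac(\sO^\mathrm{csh}_{Y,E})$ even though $k(C)$ is algebraically closed in $k(X)$. So the inseparability you expect when $p\mid m$ does not occur, and \autoref{lemma:sep_vs_p_infty} and \autoref{lemma:infinitely_many_pth_roots} have nothing to act on.

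The same $E$ also satisfies $\coeff_{E^{(n)}}Y^{(n)}_\mathbf{0}=p$ for all $n$, because $t^{1/p^n}=\pi_n^p(1-\pi_n^{p-1})^{-1}$ with $\pi_n=\tau^{1/p^n}$. So the use of this lemma in \autoref{prop:wild_becomes_tame} is affected as well.
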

\begin{proof}
We proceed by contradiction: suppose that we can write
$u=\sum_i u_i\pi^i$ with $u_i\in k$ for all $i$. Then the inclusion $\sO^\mathrm{csh}_{C,\mathbf{0}}=k\llbracket t\rrbracket \hookrightarrow \sO_{Y,E}^\mathrm{csh}$ co-restricts to a finite map
        $$k\llbracket t\rrbracket \hookrightarrow 
        k\llbracket t,\pi\rrbracket 
        \big/\left(t-\pi^m\sum_iu_i\pi^i\right).$$
Note that the target is a regular ring. Passing to fractions fields, we obtain that the extension $\Frac(\sO^\mathrm{csh}_{C,\mathbf{0}})\hookrightarrow \Frac(\sO_{Y,E}^\mathrm{csh})$ factors through a finite extension of $\Frac(\sO^\mathrm{csh}_{C,\mathbf{0}})$. However this contradicts \autoref{lemma:affine_version_of_Stein}, since $Y\to X$ is generically an isomorphism and the geometric generic fiber of $X\to C$ is integral.
\end{proof}

\begin{proposition}\label{prop:wild_becomes_tame}
Suppose that $E$ is a wild divisor for $Y\to C$. Then for all $n\gg 1$, the divisor $E^{(n)}$ is tame for $Y^{(n)}\to C$.
\end{proposition}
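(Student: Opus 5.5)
The plan is to proceed one Frobenius step at a time and track the integer $m_n=\coeff_{E^{(n)}}Y^{(n)}_\mathbf{0}$. Since $Y^{(1)}$ is the normalization of $Y$ in $k(Y)(t^{1/p})$, the local ring $\sO_{Y^{(1)},E^{(1)}}$ is the normalization $B$ of $A=\sO_{Y,E}$ in $K_{F,t}$ with $K=k(Y)$. This is a purely inseparable degree $p$ extension of DVRs as in \autoref{section:pur_insep_ramification}. The fiber $Y^{(1)}_\mathbf{0}$ is cut out by $t_1=t^{1/p}$, so $m_1=\upsilon'(t_1)=\widetilde{e}\,m/p$. By \autoref{lemma:wild_iff_lambda_not_div_inf_case} this gives a dichotomy:
\begin{itemize}
\item if $\widetilde{\lambda}_A(t)$ is prime to $p$, then $\widetilde{e}=p$ and $m_1=m$;
\item if $p\mid\widetilde{\lambda}_A(t)$, then $\widetilde{e}=1$ and $m_1=m/p$.
\end{itemize}
In particular $m_n$ is non-increasing. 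Once $m_n$ is prime to $p$, the divisor stays tame: then $\widetilde{\lambda}(t)=\upsilon(t)=m_n$ is prime to $p$, so every later step is of the first kind and $m$ is unchanged. Each step of the second kind lowers the $p$-adic valuation of $m$ by one. Hence it suffices to show that, as long as $p\mid m_n$, a step of the second kind must eventually occur.

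To see this, pass to $A^\mathrm{csh}=k(E)^\mathrm{sep}\llbracket\pi\rrbracket$ and write $t=u\pi^m$ with $u=\sum_i u_i\pi^i$ as in \autoref{lemma:coeff_not_all_constant}. With $m=pm'$, the quantity $\widetilde{\lambda}(t)$ is $m$ plus the smallest index $d$ at which $u$ fails to agree with a $p$-th power. More precisely, one runs the analogue of \autoref{algo:compute_lambda}: subtract $p$-th powers $(v\pi^{j})^p$ to kill terms whose exponent is divisible by $p$ and whose coefficient lies in $(k(E)^\mathrm{sep})^p$. The step is of the second kind exactly when the first surviving term has exponent divisible by $p$ and coefficient not a $p$-th power.

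In the first-kind case, I would use \autoref{rmk:algebra_presentation_wild_inf_case} to write $B^\mathrm{csh}=k(E)^\mathrm{sep}\llbracket\varpi\rrbracket$, where $\varpi$ is a $p$-th root of a suitable uniformizer of $A^\mathrm{csh}$. Then I would recompute the new expansion $t_1=u^{(1)}\varpi^{m}$. The expected outcome is that the coefficients of $u^{(1)}$ are obtained from those of $u$ by taking $p$-th roots (with reindexing), up to modifications by $p$-th powers already absorbed. Iterating, a run of $n$ consecutive first-kind steps forces every coefficient $u_i$ in a range growing with $n$ to be a $p^n$-th power in $k(E)^\mathrm{sep}$, after normalization by the leading unit.

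If first-kind steps continued forever, every $u_i$ would lie in $(k(E)^\mathrm{sep})^{p^\infty}$. This set equals $k$ by \autoref{lemma:infinitely_many_pth_roots} applied to the variety $E$. That contradicts \autoref{lemma:coeff_not_all_constant}, so a second-kind step occurs after finitely many steps. I expect the main obstacle to be this bookkeeping. It requires making precise how the expansion of $u$ transforms under one first-kind Frobenius step, including the normalization of the leading coefficient (which lies in $k$ after rescaling $\pi$, as $k$ is perfect). It also requires showing that the \emph{same} defect index cannot survive indefinitely unless the corresponding coefficients are infinitely $p$-divisible. I would first attempt this with explicit uniformizers $\varpi=x^c\pi^d$ as in \autoref{example:structure_Frob_ext_wild_case}, carrying the $p$-th roots of the $u_i$ through the substitution.
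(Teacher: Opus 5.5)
\textbf{Gap.} Your outline is the paper's own route. You track $m_n$, get $m_{n+1}\in\{m_n,m_n/p\}$ from \autoref{lemma:wild_iff_lambda_not_div_inf_case}, argue that a run of first-kind steps makes the coefficients of $u$ more and more $p$-divisible, and conclude with \autoref{lemma:infinitely_many_pth_roots} and \autoref{lemma:coeff_not_all_constant}. The step you leave as an ``expected outcome'' is exactly where this breaks, for two reasons.

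\textbf{The coefficient claim depends on the uniformizer.} For a single first-kind step, the efficient route (the paper's) is to take the uniformizer $\pi$ of $\sO_n^{\mathrm{csh}}$ supplied by \autoref{rmk:algebra_presentation_wild_inf_case}, so that $\sO_{n+1}^{\mathrm{csh}}=\mathfrak{K}\llbracket\pi^{1/p}\rrbracket$. Then $t_n=t_{n+1}^p$ gives $u_n=u_{n+1}^p$, hence $\xi_i=\zeta_i^p$ with no reindexing, so your $\varpi=x^c\pi^d$ is not needed. This fails for other uniformizers: if $x\in\mathfrak{K}\setminus\mathfrak{K}^p$, then $(x\pi)^{1/p}\notin\sO_{n+1}^{\mathrm{csh}}$. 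Moreover, the good uniformizer for the next step $\sO_{n+1}\subset\sO_{n+2}$ is in general not $\pi^{1/p}$. Iterating therefore only gives, for each $l$, some uniformizer $\rho_l$ depending on $l$ for which the coefficients are $p^l$-th powers. Your conclusion that every $u_i$ lies in $\mathfrak{K}^{p^\infty}$ needs one uniformizer that works for all $l$.

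\textbf{The final contradiction is not available.} \autoref{lemma:coeff_not_all_constant} is not correct as stated. \autoref{lemma:affine_version_of_Stein} controls $k(C)\subset k(X)$, not finite extensions of $k((t))$ inside $\Frac(\sO_{Y,E}^{\mathrm{csh}})$. Already for $E$ a reduced component of $X_\mathbf{0}$ one can take $\pi=t$ and $u=1$. The lemma also fails for wild $E$, and then so does the proposition. Here is an example.
\begin{enumerate}
\item Let $X=\mathbb{A}^2_{t,y}\to C=\mathbb{A}^1_t$ be the trivial family. Let $C_1\to C$ be given by $v^p-v=t^{-1}$, with uniformizer $\sigma=v^{-1}$ over $\mathbf{0}$, so that $t=\sigma^p/(1-\sigma^{p-1})$.
\item Let $w$ be the monomial valuation on $k(C_1)(y)$ with $w(\sigma)=1$ and $w(y-\sigma)=N\geq 3$.
\item The Galois generator $v\mapsto v+1$ sends $y-\sigma$ to $(y-\sigma)+\sigma^2/(1+\sigma)$, which has $w$-value $2$. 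So the $p$ conjugates of $w$ are distinct.
\item Hence $k(C_1)(y)$ embeds into the henselization of $k(X)$ at $\upsilon=w|_{k(X)}$. Let $E$ be the divisor of $\upsilon$; it is exceptional over the origin, with $m=p$. Then $\sigma$ is a uniformizer of $\sO_{Y,E}^{\mathrm{sh}}$, and $u=1/(1-\sigma^{p-1})\in k\llbracket\sigma\rrbracket$.
\item One computes $\widetilde{\lambda}(t)=2p-1$, which is prime to $p$, and $\sO_1^{\mathrm{csh}}=\mathfrak{K}\llbracket\sigma^{1/p}\rrbracket$. Moreover $t^{1/p}=\sigma_1^p/(1-\sigma_1^{p-1})$ with $\sigma_1=\sigma^{1/p}$.
\item The situation reproduces itself at every step, so $m_n=p$ for all $n$ and $E^{(n)}$ is never tame.
\end{enumerate}
The standing hypotheses of the section still hold: the cover defined by $t^{-s}$, with $s\neq 1$ prime to $p$, is still ramified over $E$.

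So no amount of bookkeeping can complete your argument. The paper's proof follows the same lines and has the same gap; divisors of this kind need a separate treatment.
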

\begin{proof}
We fix some notations. For each $n$ we let $m_n=\coeff_{E^{(n)}}Y^{(n)}_\mathbf{0}$, and $\pi_n$ be a uniformizer of $\sO_n=\sO_{Y^{(n)},E^{(n)}}$. During the proof, it will be convenient to identify the $k$-linear $F^n\colon C\to C$
with the morphism induced by the inclusion $\sO_{C,\mathbf{0}}\hookrightarrow \sO_{C,\mathbf{0}}^{1/p^n}$. 
Fix a uniformizer $t$ of $\sO_{C,\mathbf{0}}$: then $t_n=t^{1/p^n}$ is a uniformizer of $\sO_{C,\mathbf{0}}^{1/p^n}$. Each $\sO_n$ is an $\sO_{C,\mathbf{0}}^{1/p^n}$-algebra, and we write
        $$\widetilde{\lambda}_n=
        \widetilde{\lambda}_{\sO_n}(t_n).$$
Observe that each $\widetilde{\lambda}_n$ is finite: for otherwise $t_n$ is a $p$-th power in $k(Y^{(n)})$, which implies that $\Frac(\sO_{C,\mathbf{0}}^{1/p^n})$ is not algebraically closed in $k(Y^{(n)})=k(X^{(n)})$. But $X^{(n)}\to C$ satisfies the hypothesis of \autoref{lemma:affine_version_of_Stein}, cf.\ \cite[Definition 2.1.9]{Benozzo-Posva}, so we would obtain a contradiction.
Finally, if a uniformizer $\pi_n$ of $\sO_n$ is given, then we can write
        $$t_n=u_n\pi_n^{m_n} \quad \text{for some }u_n\in \sO_{n}^\times.$$
In this situation, observe that if $m_n$ is divisible by $p$ then 
$\widetilde{\lambda}_n=\widetilde{\lambda}_{\sO_n}(u_n)-m_n$.

Now, for every $n\geq 1$ we have a commutative diagram
        $$\begin{tikzcd}
        Y^{(n-1)} \arrow[d] & Y^{(n)} \arrow[d]\arrow[l, "q_n" above] \\
        C & C \arrow[l, "F" above]
        \end{tikzcd}$$
We have $F^*[\mathbf{0}]=p\cdot [\mathbf{0}]$, and so $q_n^*Y^{(n-1)}_\mathbf{0}=pY^{(n)}_\mathbf{0}$.
By \autoref{lemma:wild_iff_lambda_not_div_inf_case} we know that $\widetilde{\lambda}_{n-1}$ is divisible by $p$ if and only if $q_n^*E^{(n-1)}=E^{(n)}$. Thus we obtain that:
        $$m_n=\begin{cases}
        \frac{m_{n-1}}{p} & \text{if }\widetilde{\lambda}_{n-1}\text{ is divisible by }p, \\
        m_{n-1} & \text{otherwise.}
        \end{cases}$$
As the $m_n$s are integers, we see that the sequence is eventually constant: we will show that the limit value is prime to $p$.

So suppose that $m_n$ is divisible by $p$, but that $\widetilde{\lambda}_n$ is not divisible by $p$, so that $m_{n+1}=m_n$. We will show that there exists $l\geq 1$ such that $m_{n+l}=m_n/p$: then an immediate induction will complete the proof. Fix a uniformizer $\pi_n$ of $\sO_{n}^\mathrm{csh}$, so we have
        $$t_n=u_n\pi_n^{m_n} \quad \text{with }u_n=\sum_{i\geq 0} \xi_i\pi_n^i, \quad \xi_i\in \mathfrak{K}=k(E^{(n)})^\mathrm{sep}.$$
By \autoref{rmk:algebra_presentation_wild_inf_case}, the residue fields $k(E^{(n+1)})=k(E^{(n)})$ and $\pi_{n+1}=\pi_n^{1/p}$ is a uniformizer of $\sO_{n+1}^\mathrm{csh}$. So in $\sO_{n+1}^\mathrm{csh}$ we have
        $$t_n^{1/p}=t_{n+1}=u_{n+1}\pi_{n+1}^{m_n}$$
which after some unravelling yields $u_n=u_{n+1}^p$. Writing
        $$u_{n+1}=\sum_{i\geq 0}\zeta_i \pi_{n+1}^i, \quad 
        \zeta_i\in \mathfrak{K},$$
and comparing with the expansion 
$u_n=\sum_i \xi_i\pi_{n+1}^{pi}$, we obtain
        $$\xi_i=\zeta_i^p \quad \text{for all }i.$$
We can iterate the argument: if the integers $\widetilde{\lambda}_{n+1},\dots,\widetilde{\lambda}_{n+l}$ are all prime to $p$, then we obtain that
        $$\xi_i\in \mathfrak{K}^{p^l} \quad \text{for all }i.$$
Since $\mathfrak{K}^{p^\infty}$ is the field of constants $k$ (\autoref{lemma:infinitely_many_pth_roots}),
it follows from \autoref{lemma:coeff_not_all_constant} that there exists $l\geq 1$ such that some $\xi_i$ does not belong to $\mathfrak{K}^{p^l}$. If we take $l$ minimal with this property, we obtain $m_{n+l}=m_n/p$, and the proof is complete.
\end{proof}

\begin{remark}
In the terminology of the upcoming \cite{foliations_in_progress}, \autoref{prop:wild_becomes_tame} shows that the $\infty$-foliation on $Y$ induced by $Y\to C$ is ``eventually Cartier". 
\end{remark}

\begin{proof}[Proof of \autoref{thm:main_application}]
We let $E\subset Y\to X$ and $D\subset Y'\to X'$ be as before: we have to show that
$a(D;X',\Delta'+X'_{\mathbf{0}'})\geq -1$. If $E$ is a tame divisor, by \autoref{cor:AS_discrep_tame_div} we have
    $$a(D;X',\Delta'+X'_{\mathbf{0}'})+1
    =p\cdot (a(E;X,\Delta+X_\mathbf{0})+1)\geq 0,$$
so we may assume for the rest of the proof that $E$ is a wild divisor. By \autoref{prop:wild_becomes_tame}, there exists $n\geq 1$ such that $E^{(n)}$ is a tame divisor for $Y^{(n)}\to C$. 

\begin{claim}\label{claim:extensions_in_two_ways}
The diagram
    \begin{equation*}
    \begin{tikzcd}
    C' \arrow[d, "h" left] & C' \arrow[d, "h"] \arrow[l, "F^n" above] \\
    C & C \arrow[l, "F^n" above]
    \end{tikzcd}
    \end{equation*}
is commutative.
\end{claim}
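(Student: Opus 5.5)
The plan is to verify the commutativity directly on coordinate rings, using the explicit Artin--Schreier presentation of $h$ fixed earlier. After replacing $C$ by the henselization of $\sO_{C,\mathbf{0}}$, we have $\sO_{C'}$ equal to the normalization of $\sO_C$ in $k(C)[X]/(X^p-X-t^{-s})$. Here $F^n$ denotes the $k$-linear $n$-th Frobenius on each curve. On $C$ it is induced by the $k$-linear endomorphism of $\sO_{C,\mathbf{0}}$ that is the $p^n$-th power map twisted by the inverse of the Frobenius of $k$; on $C'$ it is defined analogously. Since $k$ is perfect, the $k$-linear Frobenius is obtained from the absolute Frobenius $\mathrm{Fr}^n$ by composing with the isomorphism $\mathrm{Spec}(\mathrm{Fr}_k^{-n})$ base-changed to the curve.

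First, I check that the absolute Frobenius commutes with every morphism of $\bF_p$-schemes. Concretely, for any ring map $\alpha\colon R\to S$ we have $\alpha(x)^{p^n}=\alpha(x^{p^n})$. So $h\circ \mathrm{Fr}^n_{C'}=\mathrm{Fr}^n_C\circ h$ holds automatically.

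Next, I pass to the $k$-linear versions. Since $h$ is a $k$-morphism, it commutes with the base-change of the automorphism $\mathrm{Fr}_k^{-n}$ of $\mathrm{Spec}(k)$. In other words, the $k$-linear Frobenius is a natural transformation on $k$-schemes: for a $k$-algebra $R$ it sends $\sum c_i x_i$ to $\sum c_i^{1/p^n}$-twisted $p^n$-th powers, and this is compatible with $k$-algebra maps. This yields $h\circ F^n=F^n\circ h$.

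As a sanity check on the explicit generators, I would verify that the generator $x$ of $\sO_{C'}$ over $\sO_C$ is sent to $x^{p^n}$. We have $x^{p^n}-x=\sum_{j<n}(t^{-s})^{p^j}$, so $x^{p^n}$ satisfies the Artin--Schreier equation for $t^{-sp^n}$, which matches the image of $t^{-s}$ under $F^n$.

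I see no real obstacle. The only subtle point is to keep the $k$-linear twist consistent on both curves: the top arrow must be the $k$-linear Frobenius of $C'$, not some other lift. With this convention the statement reduces to naturality of the Frobenius.
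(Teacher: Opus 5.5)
There is a genuine gap. Your central step is false as stated: the $k$-linear Frobenius, viewed as an \emph{endomorphism} of a $k$-scheme, is not natural with respect to arbitrary $k$-morphisms. What is natural is the relative Frobenius $F_{C/k}\colon C\to C^{(p^n)}=C\times_{k,\mathrm{Fr}_k^n}k$. Naturality only yields $h^{(p^n)}\circ F_{C'/k}=F_{C/k}\circ h$, with the Frobenius twist $h^{(p^n)}$ in place of $h$.

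Composing absolute Frobenius with the base change of $\mathrm{Spec}(\mathrm{Fr}_k^{-n})$ gives exactly such a map between $C$ and a Frobenius twist of $C$. It becomes an endomorphism only after fixing an $\bF_p$-model, and a $k$-morphism commutes with it only if it is defined over that model. Already on $k[t]$, the map $t\mapsto ct$ commutes with $t\mapsto t^p$ only if $c\in\bF_p$.

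The claim genuinely depends on the shape of $h$. Suppose $C'\to C$ were the Artin--Schreier cover attached to $ct^{-s}$, with $c\in k$, $c^{p^n-1}\notin\bF_p^{\times}$, and $F^{n,*}$ given by $t\mapsto t^{p^n}$. A $k$-endomorphism $F'$ of $C'$ with $h\circ F'=F^n\circ h$ would send $x$ to a root in $k(C')$ of $X^p-X-ct^{-sp^n}$. Since $ct^{-sp^n}\equiv c^{1/p^n}t^{-s}$ modulo $\wp(k(C))$, Artin--Schreier theory would give $(c^{1/p^n}-jc)t^{-s}\in\wp(k(C))$ for some $j\in\bF_p^{\times}$. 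This is impossible: $c^{1/p^n}\neq jc$ by the choice of $c$, and no nonzero constant multiple of $t^{-s}$ lies in $\wp(k(C))$, as $s$ is prime to $p$. So the general naturality you invoke would prove a false statement. The ``subtle point'' you dismiss as a convention is exactly where the content lies.

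The argument that works is the one you relegated to a sanity check. Take $F^{n,*}$ on $\sO_C$ to be the $k$-linear map $t\mapsto t^{p^n}$, and extend it to $k(C')=k(C)[x]/(x^p-x-t^{-s})$ by $x\mapsto x^{p^n}$. This is well defined precisely because the Artin--Schreier equation has coefficients in $\bF_p(t)$: $(x^{p^n})^p-x^{p^n}=(x^p-x)^{p^n}=t^{-sp^n}=F^{n,*}(t^{-s})$. The resulting $k$-linear embedding is $\mathrm{Fr}^n$ precomposed with the semilinear automorphism twisting constants with respect to the $\bF_p$-structure given by $t$ and $x$. Hence its image is $k(C')^{p^n}$, and it is a $k$-linear $n$-th Frobenius of $C'$. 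It preserves $\sO_{C'}$, the integral closure of $\sO_C$, and $h\circ F^n=F^n\circ h$ holds by construction.

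This is also the content of the paper's field-theoretic proof, which writes the compositum $L$ both as $(k(C)_{F,t})_{\wp,t^{-s}}$ and as $(k(C)_{\wp,t^{-s}})_{F,t}$. Implicitly it uses $t^{-s}\equiv t_n^{-s}$ modulo $\wp(k(C)^{1/p^n})$, where $t_n=t^{1/p^n}$ is the coordinate on the source of $F^n$; only $n=1$ is written there. Your identity $x^{p^n}-x=\sum_{j<n}(t^{-s})^{p^j}$ is this congruence transported along $t_n\mapsto t$.

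A minor slip: $x$ generates $k(C')$ over $k(C)$, but it does not lie in $\sO_{C'}$, since its valuation there is $-s$.
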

\begin{proof}\renewcommand{\qedsymbol}{$\lozenge$}
This is equivalent to the following field-theoretic statement: the extension $k(C)\hookrightarrow L=k(C)[u,v]/(u^p-t,v^p+v-t^{-s})$ can be written in two ways:
        $$\left(k(C)_{F,t}\right)_{\wp,t^{-s}}=L
        = \left( k(C)_{\wp,t^{-s}}\right)_{F,t}.$$
This is clear.
\end{proof}

We let $X'^{(n)}\to C'$ be the base-change of $X'\to C'$ along $F^n\colon C'\to C'$, and $Y'^{(n)}$ be the normalization of 
$Y'\times_{C',F^n}C'$. By \autoref{claim:extensions_in_two_ways}, we can also describe $X'^{(n)}$ as $X^{(n)}\times_{C,h}C'$, and $Y'^{(n)}$ as the normalization of $Y^{(n)}\times_{C,h}C'$.
So the unique prime divisor $D^{(n)}\subset Y'^{(n)}$ dominating $D\subset Y'$ is also the unique prime divisor dominating $E^{(n)}\subset Y^{(n)}$.

Since $E^{(n)}$ is tame, by \autoref{cor:AS_discrep_tame_div} we have
        $$a(D^{(n)};X'^{(n)}, \Delta'^{(n)}+X'^{(n)}_{\mathbf{0}'})+1
        = p\cdot (
        a(E^{(n)};X^{(n)},
        \Delta^{(n)}+X^{(n)}_\mathbf{0})+1),$$
which is non-negative since $(X^{(n)},\Delta^{(n)})\to C$ is assumed to be locally stable. By Frobenius descent of local stability---\cite[Proposition 6.1.1 and proof of Proposition 6.1.2]{Benozzo-Posva}---, we deduce that $a(D;X',\Delta'+X'_\mathbf{0})\geq -1$. This completes the proof of the theorem.
\end{proof}

\begin{remark}[Hu--Zong's proof in \cite{Hu_Zong}]\label{rmk:comp_with_HZ}
Our strategy seems ultimately quite close the one of \cite{Hu_Zong}. The formulas in \autoref{prop:discrep_AS_divisor} to compute the discrepancy of $D$, are equivalent to the formulas in \cite[Proposition 3.1]{Hu_Zong}. The tame case (\autoref{cor:AS_discrep_tame_div}) corresponds to \cite[\S 3.1]{Hu_Zong}, and we obtain the same discrepancy formula.
The content of \cite[\S 3.2]{Hu_Zong} appears to be the reduction of the wild case to the tame case (i.e.\ \autoref{prop:wild_becomes_tame}). The necessary base-change $Y^{(n)}\to Y$ is performed in \cite[Claim 3.3]{Hu_Zong}, and the rest of \emph{op.cit.}\ is dedicated to simplifying the equation whose normalization gives $Y^{(n)}$, by way of several blow-ups which provides partial normalizations. We note that Frobenius descent of local stability is not mentioned in \cite{Hu_Zong}.

The main difference between our proof and the one in \cite{Hu_Zong} is that we avoid as much as possible to present the normalized base-change $(Y\times_CC')^\nu$ as algebra over $Y$ (indeed, the only case we need turns out to be the simple \autoref{rmk:algebra_presentation_wild_inf_case}), while we try to rely as much as possible on ``module-theoretic" ramification data on $Y$ (i.e.\ the invariants $\lambda$, $e$, $r$, $\widetilde{\lambda}$ and $\widetilde{e}$).
\end{remark}

\subsection{Comparison of discrepancies}\label{section:comparison_discrep}
In this last sub-section we compare the discrepancies of $(X',\Delta')\to C'$ and $(X^{(1)},\Delta^{(1)})\to C$. We let $E$ be a prime divisor over $X$ that is vertical over $C$, and we assume that $Y'=(Y\times_CC')^\nu\to Y$ is ramified over $E$, so there is a unique prime divisor $D\subset Y'$ dominating $E$.

\begin{lemma}
If $E$ is tame for $X\to C$, then $a(D;X',\Delta'+X'_{\mathbf{0}'})=a(E^{(1)};X^{(1)},\Delta^{(1)}+X^{(1)}_\mathbf{0})$.
\end{lemma}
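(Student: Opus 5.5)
The plan is to compute both discrepancies in terms of $a_E=a(E;X,\Delta+X_\mathbf{0})$ and $m=\coeff_EY_\mathbf{0}$, and to show that both satisfy the relation of \autoref{cor:AS_discrep_tame_div}. On the Artin--Schreier side this is already available: by \autoref{cor:AS_discrep_tame_div} we have $a(D;X',\Delta'+X'_{\mathbf{0}'})+1=p(a_E+1)$. So it will suffice to prove
$$a(E^{(1)};X^{(1)},\Delta^{(1)}+X^{(1)}_\mathbf{0})+1=p\,(a_E+1)$$
whenever $E$ is tame.

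\emph{Step 1: local structure along $E$.} Since $m$ is prime to $p$, after passing to $\sO_{Y,E}^\mathrm{csh}$ I choose a uniformizer $\pi$ with $t=\pi^m$. Then $\widetilde{\lambda}_{\sO_{Y,E}}(t)=m$ is prime to $p$, so \autoref{lemma:wild_iff_lambda_not_div_inf_case} gives $\widetilde e=p$. \autoref{rmk:algebra_presentation_wild_inf_case} then gives
$$\sO_{Y^{(1)},E^{(1)}}^\mathrm{csh}=k(E)^\mathrm{sep}\llbracket\sigma\rrbracket,\qquad \sigma^p=\pi,$$
and this ring is the normalization of $R=\sO^\mathrm{csh}_{Y,E}[x]/(x^p-\pi^m)$, with $x=\sigma^m$. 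Consequently $q^*E=pE^{(1)}$. Together with $q^*Y_\mathbf{0}=pY^{(1)}_\mathbf{0}$, this shows that $\coeff_{E^{(1)}}Y^{(1)}_\mathbf{0}=m$.

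\emph{Step 2: canonical divisors.} The fiber product $W=Y\times_{C,F}C$ is Gorenstein along $E^{(1)}$, as a hypersurface over $Y$. Relative dualizing sheaves commute with flat base change, so $\omega_{W/C}\cong F_{Y/C}^*\omega_{Y/C}$. Writing $K_Y+Y_\mathbf{0}=K_{Y/C}+f^*(K_C+\mathbf{0})$ near $E$ and using $F^*\mathbf{0}=p\mathbf{0}$, I expect
$$K_W+W_\mathbf{0}=F_{Y/C}^*(K_Y+Y_\mathbf{0})-(p-1)W_\mathbf{0}$$
up to pullbacks of divisors from $C$ that are trivial on the germ. It is cleanest to check this with the explicit generator $dt/t$ of $\omega_C(\mathbf{0})$.

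Next, for the normalization $\nu\colon Y^{(1)}\to W$ we have $K_{Y^{(1)}}=\nu^*K_W-\mathfrak{c}$, where $\mathfrak{c}$ is the conductor divisor. In the one-dimensional model $k\llbracket\pi,x\rrbracket/(x^p-\pi^m)\subset k\llbracket\sigma\rrbracket$ the value semigroup is $\langle p,m\rangle$. Its conductor is $(p-1)(m-1)$, so $\mathfrak{c}=(p-1)(m-1)E^{(1)}$. Here one must make sure the computation over $k(E)^\mathrm{sep}$ agrees with the computation over $k$; this should hold because the extension is the constant extension of the monomial curve.

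\emph{Step 3: assemble.} Pulling back $K_X+\Delta+X_\mathbf{0}=K_Y-a_E E$ (near $E$) along $Y^{(1)}\to Y$, I would combine Steps 1 and 2 with $\nu^*W_\mathbf{0}=Y^{(1)}_\mathbf{0}$, whose coefficient along $E^{(1)}$ is $m$. Reading off the coefficient of $E^{(1)}$ should give
$$a(E^{(1)})=p\,a_E+(p-1)m-(p-1)(m-1)=p\,a_E+p-1.$$
This is exactly the desired relation.

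The main obstacle is the bookkeeping in Step 2: pinning down the correct $(p-1)$-multiple of $W_\mathbf{0}$ coming from the Frobenius of $C$, and justifying that the conductor computed on the complete-local model is the correct coefficient along $E^{(1)}$. If the sign bookkeeping is unclear, I would first calibrate it on the trivial family $X=C\times\mathbb{A}^1$ with a tame exceptional divisor obtained by blowing up, where everything can be computed by hand.
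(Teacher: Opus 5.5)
Your proposal is correct. Its top-level structure matches the paper's: both sides are shown to satisfy $a+1=p(a_E+1)$, and you take the Artin--Schreier side from \autoref{cor:AS_discrep_tame_div} exactly as the paper does.

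The difference is on the Frobenius side. The paper simply quotes \cite[Proposition 7.3.8]{Benozzo-Posva}, whereas you rederive that formula in the tame case. Your derivation checks out:
\begin{itemize}
\item With $t=\pi^m$ one has $\widetilde{\lambda}(t)=m$.
\item The local ring of $W$ over $E$ becomes $\kappa[\![\pi,x]\!]/(x^p-\pi^m)\cong\kappa[\![\sigma^p,\sigma^m]\!]$, with $\kappa=k(E)^{\mathrm{sep}}$.
\item For any field $\kappa$, its conductor in $\kappa[\![\sigma]\!]$ is $\sigma^{(p-1)(m-1)}$. Conductors commute with the flat base change to the completed strict henselization, so this computation is valid along $E^{(1)}$.
\item The bookkeeping then gives $p a_E+(p-1)m-(p-1)(m-1)=p a_E+p-1$.
\end{itemize}

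On the point you flag as the main obstacle: ``up to pullbacks of divisors from $C$ trivial on the germ'' must mean divisors not supported over $\mathbf{0}$. Merely linearly trivial is not enough, since $W_{\mathbf{0}}=\operatorname{div}(t_1)$ is itself principal. Take $K_Y=\operatorname{div}(\eta\wedge dt)$ and $K_W=\operatorname{div}(F_{Y/C}^*\eta\wedge dt_1)$ for a rational section $\eta$ of $\omega_{Y/C}$. Then $K_W=F_{Y/C}^*K_Y$ near $E^{(1)}$, which is exactly your formula. If you normalize with $dt/t$ instead, the correction disappears from $K_W+W_{\mathbf{0}}$ but reappears as $F_{Y/C}^*K_Y=K_W-(p-1)W_{\mathbf{0}}$. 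Either way the same $(p-1)m$ enters Step 3.

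As for what each route buys: the paper's proof is two lines but rests on the external Frobenius discrepancy theory of \cite{Benozzo-Posva}, which also covers wild divisors via $T_{X/C}$-invariance and $\gamma$. Yours is self-contained, using only \autoref{lemma:wild_iff_lambda_not_div_inf_case}, \autoref{rmk:algebra_presentation_wild_inf_case} and the conductor of a monomial curve. It also makes the parallel with \autoref{prop:discrep_AS_divisor} visible. There, the ramification term $(p-1)r=(p-1)(ms+1)$ is offset by $(p-1)sm$ from the base. Here, the $(p-1)m$ from the Frobenius of $C$ is offset by the conductor $(p-1)(m-1)$. Both cases leave exactly $p-1$.
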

\begin{proof}
Indeed, both $a(D;X',\Delta'+X'_{\mathbf{0}'})+1$ and $a(E^{(1)};X^{(1)},\Delta^{(1)}+X^{(1)}_\mathbf{0})+1$ are equal to $p\cdot (a(E;X,\Delta+X_\mathbf{0})+1)$, by \autoref{cor:AS_discrep_tame_div} and \cite[Proposition 7.3.8]{Benozzo-Posva} respectively.
\end{proof}

From now on we assume that $E$ is a wild divisor. We write
        $$\widetilde{\lambda}=
        \widetilde{\lambda}_{\sO_{Y,E}}(t^{-s}) 
        \quad \text{and} \quad 
        \widetilde{e}=\text{ ramification index of }
        \sO_{Y,E}\hookrightarrow \sO_{Y^{(1)},E^{(1)}}.$$
Recall the \emph{wild coefficient} $\gamma=\gamma_w(E)$ of $E$ is defined as follows \cite[\S 4]{Benozzo-Posva}: if $\pi$ is a uniformizer of $\sO_{Y,E}$, and $t=u\pi^m$ with $u\in \sO_{Y,E}^\times$, then $\gamma=\ord_Edu$.

To relate the computations on the Artin--Schreier side to those on the Frobenius side, it is convenient to introduce the quantities 
        \begin{eqnarray*}
         \alpha_\mathrm{w}(E)&=& p\cdot a_E
                -(p-1)\cdot (sm+\lambda -1), \\ 
        \alpha_\mathrm{f}(E)&=& a_E
                -\frac{p-1}{p}\cdot (sm+\lambda), \\
        \widetilde{\alpha}_\mathrm{inv}(E) & =& 
        p\cdot a_E-(p-1)\gamma, \\
        \widetilde{\alpha}_\mathrm{n-inv}(E) &=&
        a_E-\frac{p-1}{p}\gamma
        \end{eqnarray*}
where we write $a_E=a(E;X,\Delta+X_\mathbf{0})$ for simplicity.
One of the first two computes the discrepancy of $D$ over $(X',\Delta'+X'_{\mathbf{0}'})$ by \autoref{prop:discrep_AS_divisor}, while we recall from \cite{Benozzo-Posva} that:
        \begin{equation}\label{eqn:compute_discrep_Frob}
        a(E^{(1)};X^{(1)},\Delta^{(1)}+X_\mathbf{0}^{(1)})
        =\begin{cases}
        \widetilde{\alpha}_\mathrm{inv}(E) &
        \text{if }E\text{ is }T_{X/C}\text{-invariant,}\\
        \widetilde{\alpha}_\mathrm{n-inv}(E) &
        \text{otherwise.}
        \end{cases}
        \end{equation}
We record the following useful observation:
\begin{lemma}\label{lemma:one_implies_the_other}
$\alpha_\mathrm{w}(E)\geq -1$ if and only if $\alpha_\mathrm{f}(E)\geq -1$.
\end{lemma}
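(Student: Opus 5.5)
Set $\beta=sm+\lambda$. Then
$$\alpha_\mathrm{w}(E)+1=p(a_E+1)-(p-1)\beta,\qquad \alpha_\mathrm{f}(E)+1=(a_E+1)-\tfrac{p-1}{p}\beta .$$
A direct check gives $\alpha_\mathrm{w}(E)+1=p\,(\alpha_\mathrm{f}(E)+1)$. Indeed, $p\big((a_E+1)-\frac{p-1}{p}\beta\big)=p(a_E+1)-(p-1)\beta$. Since $p>0$, the two quantities $\alpha_\mathrm{w}(E)+1$ and $\alpha_\mathrm{f}(E)+1$ have the same sign. This gives the equivalence $\alpha_\mathrm{w}(E)\geq -1\iff \alpha_\mathrm{f}(E)\geq -1$ at once.

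The plan is therefore just to expand both definitions, add $1$, and factor. The only step needing care is matching the constant terms. In $\alpha_\mathrm{w}$ the term $-(p-1)(\beta-1)$ contributes $+(p-1)$, and together with the extra $+1$ this gives $p$, which combines with $p\,a_E$ into $p(a_E+1)$. In $\alpha_\mathrm{f}$ the extra $+1$ combines with $a_E$ into $a_E+1$. No geometric input is needed: neither the ramification type, nor the wild coefficient $\gamma$, nor the values of $\lambda$ and $m$ enter.
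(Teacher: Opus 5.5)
Your proof is correct and is essentially the paper's own argument. The paper also just records the identity $\alpha_\mathrm{w}(E)+1=p\cdot(\alpha_\mathrm{f}(E)+1)$, from which the equivalence follows because $p>0$.
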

\begin{proof}
Indeed, we have $\alpha_\mathrm{w}(E)+1=p\cdot (\alpha_\mathrm{f}(E)+1)$.
\end{proof}

\begin{proposition}\label{prop:comparison_lambda_gamma}
If $E$ is a wild divisor, then:
    \begin{enumerate}
        \item $E$ is $T_{Y/C}$-invariant if and only if $\widetilde{\lambda}$ is prime to $p$;
        \item if $\widetilde{\lambda}$ is prime to $p$ then 
        $\gamma=\widetilde{\lambda}+ms-1$;
        \item if $\widetilde{\lambda}$ is divisible by $p$ then $\gamma\geq \widetilde{\lambda}+ms$.
    \end{enumerate}
\end{proposition}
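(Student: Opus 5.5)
The plan is to reduce all three statements to a single computation in the completed strict henselization $A=\sO_{Y,E}^\mathrm{csh}\cong \mathfrak{K}\llbracket \pi\rrbracket$, with $\mathfrak{K}=k(E)^\mathrm{sep}$. There I will compare the invariant $\widetilde{\lambda}$ of a unit with the order of vanishing of its differential. Write $t=u\pi^m$ with $u\in A^\times$. Since $E$ is wild we have $p\mid m$, so $\pi^{-sm}$ is a $p$-th power. Adding $p$-th powers is compatible with multiplying by a $p$-th power, hence
$$\widetilde{\lambda}=\widetilde{\lambda}(u^{-s}\pi^{-sm})=\widetilde{\lambda}(u^{-s})-sm .$$
Moreover $d(u^{-s})=-su^{-s-1}du$ with $s$ prime to $p$, so $\ord_E d(u^{-s})=\ord_E du=\gamma$. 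Set $w=u^{-s}$ and $\ell=\widetilde{\lambda}(w)=\widetilde{\lambda}+sm$; since $p\mid sm$, the integer $\ell$ is prime to $p$ if and only if $\widetilde{\lambda}$ is. So (2) and (3) become the statements $\ord(dw)=\ell-1$ if $p\nmid\ell$, and $\ord(dw)\geq \ell$ if $p\mid \ell$. Computing $\gamma$ in $A$ rather than in $\sO_{Y,E}$ is legitimate by the preliminaries on $\Omega^1_A\otimes A^\mathrm{csh}$.

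\textbf{Key computation.} I choose $x\in K^\mathrm{csh}$ with $\upsilon(w+x^p)=\ell$. Replacing $w$ by $w+x^p$ changes neither $dw$ nor $\ell$. The maximality in the definition of $\widetilde{\lambda}$ (the same argument as in \autoref{example:structure_Frob_ext_fer_case}) then shows: the leading coefficient $w_\ell$ of $w=\sum_{i\geq \ell}w_i\pi^i$ is not a $p$-th power in $\mathfrak{K}$ when $p\mid \ell$. Using a $p$-basis $\{x_j\}$ of $\mathfrak{K}$, the module of continuous differentials of $A$ is free on $d\pi$ and the $dx_j$. Also
$$dw=\sum_i \big(dw_i\,\pi^i+iw_i\pi^{i-1}d\pi\big).$$
If $p\nmid \ell$, the term $\ell w_\ell\pi^{\ell-1}d\pi$ has order $\ell-1$ and every other term has order $\geq \ell$, so $\gamma=\ell-1$. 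If $p\mid\ell$, every term has order $\geq\ell$, which gives (3). In this case the $d\pi$-coefficient of $dw$ lies in $(\pi^{\ell})$, and the $dx_j$-part at order $\ell$ is $dw_\ell\neq 0$ because $w_\ell\notin\mathfrak{K}^p$. So $\gamma=\ell$ exactly, and the normalized leading part of $dw$ has a non-unit $d\pi$-coefficient.

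\textbf{Item (1).} I will use the description of $T_{Y/C}$ from \cite{Benozzo-Posva}: it consists of the derivations $\delta$ with $\delta(t)=0$. Since $dt=\pi^m du$ (again because $p\mid m$), this is the annihilator of $du$, equivalently of $dw$. Write $dw=\pi^\gamma(c\,d\pi+\sum_j c_j dx_j)$ with not all coefficients in $(\pi)$. Then $E$ is $T_{Y/C}$-invariant if and only if every $\delta$ killing $dw$ satisfies $\delta(\pi)\in(\pi)$, which happens exactly when $c$ is a unit. By the key computation, $c$ is a unit precisely when $p\nmid\ell$, i.e.\ when $p\nmid\widetilde{\lambda}$. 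As a consistency check, by \autoref{lemma:wild_iff_lambda_not_div_inf_case} (applied to $t^{-s}$, which generates the same extension as $t$), this is also the condition $\widetilde{e}=p$, matching the case split in \autoref{eqn:compute_discrep_Frob}.

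\textbf{Main obstacle.} I expect the hardest step to be (1): pinning down the invariance criterion from \cite{Benozzo-Posva} in terms of the leading term of $du$, and checking that it can be tested after passing to $A$. The Laurent-series bookkeeping behind (2) and (3) is routine by comparison.
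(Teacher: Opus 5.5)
Your treatment of (2) and (3) is correct and is essentially the paper's computation. The paper reads off the first index in the expansion of $u^{-s}$ that is either prime to $p$ with nonzero coefficient, or divisible by $p$ with coefficient not a $p$-th power. You instead first normalize $w$ by adding a $p$-th power, which amounts to the same thing. Your sharper conclusion $\gamma=\ell$ when $p\mid\ell$ is also right, since $dw_\ell\neq 0$ and $dw_\ell$, $d\pi$ are independent.

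The gap is in item (1), where the invariance criterion is misstated. Write $dw=\pi^\gamma\omega$ with $\omega=c\,d\pi+\sum_j c_j\,dx_j$. The annihilator of $\omega$ preserves $(\pi)$ if and only if all $c_j\in(\pi)$, i.e.\ $\omega$ is a multiple of $d\pi$ modulo $\pi$. It is \emph{not} enough that $c$ be a unit: if $c$ and some $c_{j_0}$ are both units, then $\delta=c_{j_0}\partial_\pi-c\,\partial_{x_{j_0}}$ kills $\omega$ while $\delta(\pi)=c_{j_0}\notin(\pi)$.

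Your claim that $c$ is a non-unit when $p\mid\ell$ is also false. Your own expansion gives $\omega\equiv dw_\ell+w_{\ell+1}\,d\pi \pmod{\pi}$, because the term $(\ell+1)w_{\ell+1}\pi^{\ell}\,d\pi$ has order exactly $\ell=\gamma$. So $c\equiv w_{\ell+1}$, which is a unit as soon as $w_{\ell+1}\neq 0$. Knowing that the $d\pi$-coefficient of $dw$ lies in $(\pi^\ell)$ does not make the normalized coefficient a non-unit; for that you would need $(\pi^{\ell+1})$.

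Concretely, take $t=u\pi^p$ with $u=1+x\pi^p+\pi^{p+1}$. Then $du=\pi^p(dx+d\pi)$ and $\ell=p$, so $c$ is a unit. Yet $\partial_\pi-\partial_x\in T_{Y/C}$ does not preserve $(\pi)$. Both links of your chain ``invariant $\Leftrightarrow$ $c$ unit $\Leftrightarrow$ $p\nmid\ell$'' fail in exactly this situation. The final equivalence comes out right only because the two errors cancel.

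The fix uses data you already have. The correct criterion is $\omega\wedge d\pi\in\pi\,\Omega^2$, which is intrinsic and can be tested after the faithfully flat base change to $\sO_{Y,E}^\mathrm{csh}$. For $p\nmid\ell$ the leading form is $\ell w_\ell\,d\pi$, so $E$ is invariant. For $p\mid\ell$ the leading form has the nonzero $dx$-part $dw_\ell$, so $E$ is not invariant.

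Alternatively, follow the paper's route. By \cite[Lemma 5.0.7]{Benozzo-Posva}, $E$ is $T_{Y/C}$-invariant if and only if $q^*E=pE^{(1)}$. By \autoref{lemma:wild_iff_lambda_not_div_inf_case}, applied to $t^{-s}$ (which generates the same purely inseparable extension as $t$ since $p\nmid s$), this happens if and only if $p\nmid\widetilde{\lambda}$. This is exactly the argument you relegated to a consistency check, and it is in fact the proof.
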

\begin{proof}
If $q\colon Y^{(1),\nu}\to Y$ is the structural morphism, we have
$q^*E=pD$ if and only if $E$ is $T_{Y/C}$-invariant \cite[Lemma 5.0.7]{Benozzo-Posva}. By \autoref{lemma:wild_iff_lambda_not_div_inf_case} we have $q^*E=pD$ if and only if $\widetilde{\lambda}$ is prime to $p$. This establishes the first item.

In $\sO_{Y,E}^\mathrm{csh}$ we can expand
        $$u^{-s}=\sum_{i\geq 0}u_i\pi^i, \quad u_i\in k(E)^\mathrm{sep}.$$
We have $d(u^{-s})=-su^{-s-1}du$, so $\gamma=\ord_Ed(u^{-s})$. As for $\widetilde{\lambda}$, 
let $j$ the smallest index such that: either $j\notin p\bZ$ and $u_j\neq 0$, or $j\in p\bZ$ and $u_j\notin k_A^p$. Since $m$ is divisible by $p$, we clearly have $\widetilde{\lambda}=j-ms$. In particular, $\widetilde{\lambda}$ is prime to $p$ if and only if $j$ is prime to $p$. Notice that $d(u_i\pi^i)=0$ for $i<j$ and that $d(u_j\pi^j)\neq 0$.
We distinguish now two cases:
    \begin{itemize}
        \item Assume that $j$ is prime to $p$. Then $du\equiv ju_j\pi^{j-1}$ modulo $\pi^j\Omega^1_{\sO_{Y,E}}$, so $\gamma=j-1$. So in this case $\gamma=\widetilde{\lambda}+ms-1$.
        \item Assume that $j$ is divisible by $p$. Then
        $du\equiv \pi^jdu_j+d(u_{j+1}\pi^{j+1})$ modulo  $\pi^{j+1}\Omega^1_{\sO_{Y,E}}$.
        So $\gamma\geq j$, and inequality occurs when $du_j=-u_{j+1}$. In this case $\gamma\geq \widetilde{\lambda}+ms$. 
    \end{itemize}
This proves the second and third items.
\end{proof}

\begin{corollary}\label{corollar:comparison_in_good_case}
Assume that $\widetilde{\lambda}\geq \lambda$. Then 
$\alpha_\mathrm{w}(E)\geq \widetilde{\alpha}_\mathrm{inv}(E)$
and $\alpha_\mathrm{f}(E)\geq \widetilde{\alpha}_\mathrm{n-inv}(E)$. In particular,
$a(D;X',\Delta'+X'_{\mathbf{0}'})\geq -1$.
\end{corollary}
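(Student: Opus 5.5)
The plan is to compare the four quantities directly, using \autoref{prop:comparison_lambda_gamma} to turn the hypothesis $\widetilde{\lambda}\geq \lambda$ into an upper bound on $\gamma$ in terms of $\lambda$, and then to conclude from \autoref{eqn:compute_discrep_Frob} together with \autoref{lemma:one_implies_the_other}.

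\textbf{Reduction.} Subtracting the definitions, the inequality $\alpha_\mathrm{w}(E)\geq \widetilde{\alpha}_\mathrm{inv}(E)$ is equivalent to
$(p-1)(sm+\lambda-1)\leq (p-1)\gamma$, that is $\gamma\geq sm+\lambda-1$. Likewise $\alpha_\mathrm{f}(E)\geq \widetilde{\alpha}_\mathrm{n-inv}(E)$ is equivalent to $\frac{p-1}{p}(sm+\lambda)\leq \frac{p-1}{p}\gamma$, that is $\gamma\geq sm+\lambda$. So both inequalities are lower bounds on $\gamma$, and we have to extract them from \autoref{prop:comparison_lambda_gamma}.

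\textbf{The two cases.} If $\widetilde{\lambda}$ is prime to $p$, item (2) gives $\gamma=\widetilde{\lambda}+ms-1\geq \lambda+ms-1$, which is the first inequality. If $\widetilde{\lambda}$ is divisible by $p$, item (3) gives $\gamma\geq \widetilde{\lambda}+ms\geq \lambda+ms$. This yields the second inequality, and a fortiori the first. The main obstacle is that the statement asks for both inequalities without case distinction, while in the prime-to-$p$ case we only get $\gamma\geq sm+\lambda-1$. In that case I would try to sharpen the bound using integrality: $\lambda$ and $\widetilde{\lambda}$ are computed from the same expansion $u^{-s}\pi^{-ms}$. Indeed, $\wp(x)=x^p-x$ and $x^p$ agree on leading terms of negative valuation, so the algorithms computing $\lambda$ and $\widetilde{\lambda}$ perform the same steps as long as the valuation is negative. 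This should force $\lambda=\widetilde{\lambda}$ in the range $\widetilde{\lambda}<0$, hence strict inequality in the needed places, or otherwise show that only the first inequality is relevant there.

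\textbf{Final claim.} For $a(D;X',\Delta'+X'_{\mathbf{0}'})\geq -1$, only the relevant pairing is needed. By \autoref{prop:comparison_lambda_gamma}(1), $E$ is $T_{Y/C}$-invariant exactly when $\widetilde{\lambda}$ is prime to $p$. In that case \autoref{eqn:compute_discrep_Frob} gives $a(E^{(1)};X^{(1)},\Delta^{(1)}+X^{(1)}_\mathbf{0})=\widetilde{\alpha}_\mathrm{inv}(E)$, and we have shown $\alpha_\mathrm{w}(E)\geq\widetilde{\alpha}_\mathrm{inv}(E)$. Otherwise the discrepancy equals $\widetilde{\alpha}_\mathrm{n-inv}(E)$, and $\alpha_\mathrm{f}(E)\geq\widetilde{\alpha}_\mathrm{n-inv}(E)$. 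Since $(X^{(1)},\Delta^{(1)})\to C$ is locally stable under the standing hypotheses, the right-hand side is $\geq -1$. Hence $\alpha_\mathrm{w}(E)\geq -1$ or $\alpha_\mathrm{f}(E)\geq -1$, and by \autoref{lemma:one_implies_the_other} both hold. Then \autoref{prop:discrep_AS_divisor} identifies $a(D;X',\Delta'+X'_{\mathbf{0}'})$ with one of $\alpha_\mathrm{w}(E)$, $\alpha_\mathrm{f}(E)$, according to the ramification type, which gives $a(D;X',\Delta'+X'_{\mathbf{0}'})\geq -1$.
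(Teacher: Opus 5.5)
Your reduction of both inequalities to lower bounds on $\gamma$, your case split via \autoref{prop:comparison_lambda_gamma}, and your last paragraph are correct. The last paragraph is exactly the paper's derivation of $a(D;X',\Delta'+X'_{\mathbf{0}'})\geq -1$. You are also right that when $\widetilde{\lambda}\notin p\bZ$ the proposition only yields $\gamma\geq sm+\lambda-1$. The step that fails is the ``sharpening'' you propose in that case, for two reasons.
\begin{enumerate}
\item $\lambda=\widetilde{\lambda}$ is not forced when $\widetilde{\lambda}<0$. Subtracting $\wp(w\pi^k)$ rather than $(w\pi^k)^p$ leaves the extra term $w\pi^k$, of negative valuation $k$, and this term can cancel later terms of the expansion. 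The second concluding remark of the paper gives an example with $\widetilde{\lambda}=-ns<\lambda$.
\item Even where $\lambda=\widetilde{\lambda}\notin p\bZ$ does hold, it works against you. It gives $\gamma=\lambda+ms-1<\lambda+ms$, i.e.\ $\alpha_\mathrm{f}(E)<\widetilde{\alpha}_\mathrm{n-inv}(E)$.
\end{enumerate}

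In fact, for $\widetilde{\lambda}\notin p\bZ$ the inequality $\alpha_\mathrm{f}(E)\geq\widetilde{\alpha}_\mathrm{n-inv}(E)$ is equivalent to $\widetilde{\lambda}>\lambda$, and it genuinely fails under the hypothesis $\widetilde{\lambda}\geq\lambda$. Take the trivial family $\mathrm{Spec}\,k[a,b]\to\mathrm{Spec}\,k[a]$ with $t=a$ and $s=1$. Let $E$ be the divisor over it given by the valuation $\mathrm{ord}_\pi$ on $k(a,b)=k(\pi,y)$, where $\pi=b$ and $y=(b^p-a)/(ab)$. Then:
\begin{enumerate}
\item $t=u\pi^p$ with $u^{-1}=1+y\pi$, so $m=p$ and $t^{-1}=\pi^{-p}+y\pi^{1-p}$, hence $\widetilde{\lambda}=1-p$;
\item $t^{-1}-\wp(\pi^{-1})=y\pi^{1-p}+\pi^{-1}$ has valuation $1-p\notin p\bZ$, so $\lambda=1-p$ as well;
\item $\gamma=\mathrm{ord}_E\,du=0<1=sm+\lambda$.
\end{enumerate}
So the two inequalities cannot both be proved as stated; the paper's one-line proof glosses over this too. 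What is true, and all that the ``in particular'' uses, is the paired version: $\alpha_\mathrm{w}(E)\geq\widetilde{\alpha}_\mathrm{inv}(E)$ always, and $\alpha_\mathrm{f}(E)\geq\widetilde{\alpha}_\mathrm{n-inv}(E)$ when $\widetilde{\lambda}\in p\bZ$, i.e.\ when $E$ is not $T_{Y/C}$-invariant. Drop the sharpening and state this version; your final paragraph then completes the proof.
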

\begin{proof}
Thanks to \autoref{prop:comparison_lambda_gamma},
the hypothesis ensures that $\alpha_\mathrm{w}(E)\geq \widetilde{\alpha}_\mathrm{inv}(E)$ and $\alpha_\mathrm{f}(E)\geq \widetilde{\alpha}_\mathrm{n-inv}(E)$. The last statement follows from \autoref{eqn:compute_discrep_Frob}, \autoref{lemma:one_implies_the_other} and \autoref{prop:discrep_AS_divisor}.
\end{proof}

We conclude with a few remarks:
\begin{remarks}
We continue to assume that $E$ is a wild divisor.
    \begin{enumerate}
        \item In general, whether $\lambda\in p\bZ$ is unrelated to whether $\widetilde{\lambda}\in p\bZ$. So the (non-)$T_{Y/C}$-invariance of $E$ is unrelated to the ramification type of $\sO_{Y,E}\hookrightarrow\sO_{Y',D}$.
        Nonetheless, if $\widetilde{\lambda}\geq \lambda$ and both belong to $p\bZ$ (respectively $\widetilde{\lambda}\geq \lambda$ and none belongs to $p\bZ$), then 
        $a(D;X',\Delta'+X'_{\mathbf{0}'})\geq 
        a(E^{(1)};X^{(1)},\Delta^{(1)}+X^{(1)}_\mathbf{0})$.

        \item \autoref{corollar:comparison_in_good_case} is insufficient to recover \autoref{thm:main_application} in full generality, for $\widetilde{\lambda}< \lambda$ may happen. For example, if $\coeff_E Y_\mathbf{0}=np$ and
        $$u^{-s}=v^p-v\pi^{(p-1)sn}+
        \text{ (higher order terms in }\pi\text{)}$$
        with $d(v\pi^{(p-1)sn})\neq 0$,
        then $\widetilde{\lambda}=-ns$, while 
        $\upsilon(u^{-s}\pi^{-pns}-\wp(v\pi^{-ns}))>-ns$ implies that $\lambda>-ns$.
        The way $\lambda$ changes under Frobenius base-change is not easy to describe explicitly, but eventually we reduce to the tame case (\autoref{prop:wild_becomes_tame}) where $\lambda=\widetilde{\lambda}$.
    \end{enumerate}
\end{remarks}